\newtheorem{thm}{Theorem}[section]
\newtheorem{lemma}[thm]{Lemma}
\newtheorem{prop}[thm]{Proposition}
\newtheorem{cor}[thm]{Corollary}
\newtheorem{defn}{Definition}
\newtheorem{prbl}[thm]{Problem}
\newtheorem{que}[thm]{Question}
\title{On the Bernstein-Hoeffding method}
\author{Christos Pelekis\thanks{Department of Computer Science, KU Leuven, Celestijnenlaan 200A, 3001, Belgium, 
email: pelekis.chr@gmail.com} \ \ \ \
Jan Ramon\thanks{Department of Computer Science, KU Leuven, Celestijnenlaan 200A, 3001, Belgium, 
email: Jan.Ramon@cs.kuleuven.be}\ \ \ \ Yuyi Wang\thanks{Department of Computer Science, KU Leuven, Celestijnenlaan 200A, 3001, Belgium, 
email: Yuyi.Wang@cs.kuleuven.be} }
\date{}							
\begin{document}
\maketitle


\begin{abstract} 
We show that the Bernstein-Hoeffding method can be employed to
a larger class of generalized moments. This class 
includes the exponential moments whose properties play a key role in the proof of 
a well-known inequality of Wassily Hoeffding, for sums of independent and bounded random variables whose 
mean is assumed to be known.  
As a result we can generalise and improve upon  this inequality.
We show that Hoeffding's inequality is optimal in a broader sense.  
Our approach allows to obtain "missing" factors in 
Hoeffding's inequality whose existence is motivated by the central limit theorem. The later 
result is  a rather weaker version of a theorem that is due to Michel Talagrand. Using ideas from the theory 
of Bernstein polynomials, we 
show that the Bernstein-Hoeffding method can be adapted to the case in which one has information 
on higher moments of the random variables. 
Moreover, we consider the performance of the method under additional information 
on the conditional distribution of the random variables and, finally, we show that the method
reduces to
Markov's inequality  when employed to non-negative and 
unbounded random variables. 
\end{abstract}

\noindent{\emph{Keywords}: Hoeffding's inequality, convex orders, Bernstein polynomials}

\section{Introduction}
\subsection{Motivation and related work}\label{motrel}

For a given real $p\in (0,1)$ let 
$\mathcal{B}(p)$ be the set of all $[0,1]$-valued random variables whose mean is equal to $p$. 
Formally

\[ \mathcal{B}(p):= \{X: 0\leq X \leq 1, \mathbb{E}[X]= p \}. \]
The main motivation behind this work is the following, well-known, problem.\\

\begin{prbl}
\label{Hoeff} Fix $n$ real numbers $p_1,\ldots,p_n\in (0,1)$ and a
real number, $t$, such that $\sum_{i=1}^{n}p_i<t< n$.  Find (or give upper bounds on)
\[   \psi(p_1,\ldots,p_n ;t)=  \sup_{\mathbf{X}} \mathbb{P}\left[\sum_{i=1}^{n}X_i\geq t\right], \]
where the supremum is taken over all random 
vectors $\mathbf{X}=(X_1,\ldots,X_n)$ of  independent random variables
with
$X_i\in\mathcal{B}(p_i)$,  for  $i\in \{1,\ldots, n\}$.
\end{prbl}

If $t\leq \sum_i p_i$, then the problem is trivial; just choose $X_i$ to be equal to $p_i$ with probability $1$.
There is a vast amount of literature that is related to Problem \ref{Hoeff}.
The interested reader is invited to take a look at the works of Bentkus 
\cite{Bentkustwo}, \cite{Bentkusone}, \cite{Bentkusthree},  
Fan et al. \cite{Fan}, From \cite{From}, From et al. \cite{Fromtwo},
Gy{\"o}rfi et al. \cite{Tusnady}, Hoeffding \cite{Hoeffdingone}, Kha et al. \cite{Nagaev}, Krafft et al. \cite{Kraft}, 
McDiarmid \cite{McDiarmid}, Pinelis \cite{Pinelisone},\cite{Pinelistwo}, Schmidt et al. \cite{Schmidt}, 
Siegel \cite{Siegelone}, Talagrand  
\cite{Talagrand}, Xia \cite{Xia} among many others. \\
Determining the function 
$\psi(p_1,\ldots,p_n ;t)$, for given $p_1,\ldots,p_n, t$, turns out to 
be a notorious problem that has been around for many years.
To our knowledge, no solution to this problem has ever been reported and most of the existing work 
focuses towards obtaining upper bounds on the function $\psi(p_1,\ldots,p_n ;t)$ that are as tight as possible.\\
Probably the first systematic approach that allows one to obtain upper bounds on large 
deviations from the expectation 
for sums of independent, bounded random variables was performed by Hoeffding in \cite{Hoeffdingone}. 
Hoeffding's approach is based on a method of Bernstein (see \cite{Hoeffdingone}, page $14$) and from now on will be 
referred to as the \emph{Bernstein-Hoeffding} method.  
The Bernstein-Hoeffding method is, briefly, the following.\\
Markov's inequality and the assumption that the random variables are independent imply that
\[ \mathbb{P}\left[\sum_{i=1}^{n}X_i\geq t\right] \leq 
e^{-ht} \prod_{i=1}^{n} \mathbb{E}[e^{hX_i}]\leq e^{-ht}\left\{\frac{1}{n} 
\sum_{i=1}^{n}\mathbb{E}[e^{hX_i}] \right\}^n, \; \text{for all}\; h>0,\]
where the last inequality comes from the arithmetic-geometric means inequality.
By exploiting the fact that the function $f(t)=e^{ht}$ is \emph{convex} one can show that 
\[\mathbb{E}[e^{hX_i}]\leq \mathbb{E}[e^{hB_i}], \] 
where $B_i$ is a Bernoulli random variable of mean $p_i$. 
Hence we conclude that 
\[ \mathbb{P}\left[\sum_{i=1}^{n}X_i\geq t\right] \leq e^{-ht} \{(1-p)+ pe^h\}^n, \; \text{for all}\; h>0, \] 
where $p=\frac{1}{n}\sum_{i=1}^{n}p_i$. If we  minimise the expression in the right hand side of 
the last inequality with respect to $h$, we find $e^h=\frac{t(1-p)}{p(n-t)}$
and hence we obtain the following celebrated result  of Hoeffding (see \cite{Hoeffdingone}, Theorem $1$). \\

\begin{thm}[Hoeffding, $1963$]
\label{folklore}
Let $p_1,\ldots,p_n$ be $n$, given, real numbers from the interval $(0,1)$.
Let also the random variables $X_1,\ldots,X_n$ be independent and such that $X_i\in \mathcal{B}(p_i)$, for each 
$i=1,\ldots,n$.
Set $p = \frac{1}{n}\sum_{i=1}^{n}\mathbb{E}[X_i]$. Then for any $t$ such that
$np < t < n$ we have
\[ \mathbb{P}\left[\sum_{i=1}^{n}X_i  \geq t \right] \leq   \inf_{h>0}\left\{ e^{-ht}\left(1-p + pe^h\right)^n\right\} . \]
Furthermore, 
\[  \inf_{h>0}\left\{ e^{-ht}\left(1-p + pe^h\right)^n\right\}  = 
\left(\frac{p(n-t)}{t(1-p)}\right)^{t} \left(\frac{(1-p)n}{n-t}\right)^{n} := H(n,p,t)  .\]
\end{thm}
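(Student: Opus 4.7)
The plan is essentially to formalize the chain of inequalities already outlined in the introduction, so my strategy has two halves: first establish the stated exponential bound, then perform the optimization over $h>0$ to get the closed-form expression for $H(n,p,t)$.

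For the first half, I would fix $h>0$ and start from Markov's inequality applied to the non-negative random variable $e^{h\sum_i X_i}$, giving
\[\mathbb{P}\left[\sum_{i=1}^n X_i \geq t\right] = \mathbb{P}\left[e^{h\sum_i X_i} \geq e^{ht}\right] \leq e^{-ht}\,\mathbb{E}\!\left[\prod_{i=1}^n e^{hX_i}\right].\]
Independence splits the expectation into a product, and the AM-GM inequality on the $n$ factors $\mathbb{E}[e^{hX_i}]$ yields the symmetrization. The key per-coordinate bound is $\mathbb{E}[e^{hX_i}]\leq \mathbb{E}[e^{hB_i}] = (1-p_i)+p_i e^h$, where $B_i$ is Bernoulli$(p_i)$. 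I would justify this by noting that the map $x\mapsto e^{hx}$ is convex on $[0,1]$, so for $x\in[0,1]$ one has $e^{hx}\leq (1-x)+xe^h$; taking expectations and using $\mathbb{E}[X_i]=p_i$ gives the bound. Combining with AM-GM turns the product $\prod_i((1-p_i)+p_ie^h)$ into $((1-p)+pe^h)^n$, where $p = \tfrac1n\sum p_i$.

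For the second half, I would minimize $\varphi(h) := -ht + n\log(1-p+pe^h)$ over $h>0$. The condition $\varphi'(h)=0$ becomes $\dfrac{npe^h}{1-p+pe^h}=t$, which solves explicitly as $e^h = \dfrac{t(1-p)}{p(n-t)}$. The hypothesis $np<t<n$ is what guarantees this critical value satisfies $e^h>1$, i.e.\ $h>0$, so the critical point lies in the admissible range. Convexity of $\varphi$ (which follows from the fact that the log-moment-generating function of a bounded distribution is convex in $h$) then implies this critical point is the global minimizer. Substituting $e^h$ back gives $1-p+pe^h = \dfrac{(1-p)n}{n-t}$ after a short simplification, and $e^{-ht} = \left(\dfrac{p(n-t)}{t(1-p)}\right)^t$, so the product becomes exactly $H(n,p,t)$.

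The main obstacle I anticipate is mostly bookkeeping rather than a conceptual one: verifying that the critical point really is a minimizer (as opposed to a local extremum that needs checking against boundary behavior as $h\to 0^+$ and $h\to\infty$), and executing the algebraic simplification that reduces $1-p+pe^h$ and $e^{-ht}$ to the clean factored form $H(n,p,t)$ without arithmetic slips. Both are routine once the critical equation is in hand, but the second step is where one would most easily miscollect exponents and must be performed carefully.
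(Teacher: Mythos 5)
Your proposal is correct and follows essentially the same route as the paper: Markov's inequality applied to $e^{h\sum_i X_i}$, independence, the convexity bound $\mathbb{E}[e^{hX_i}]\leq (1-p_i)+p_ie^{h}$ (which the paper phrases via the coupling/Jensen argument of Lemma \ref{coupling}, equivalent to your chord inequality), AM--GM, and then optimization in $h$ with critical point $e^{h}=\tfrac{t(1-p)}{p(n-t)}$, where convexity of the log-moment generating function justifies that this interior critical point gives the infimum. Your algebraic substitutions yielding $1-p+pe^{h}=\tfrac{(1-p)n}{n-t}$ and $e^{-ht}=\bigl(\tfrac{p(n-t)}{t(1-p)}\bigr)^{t}$ check out, so the closed form $H(n,p,t)$ is obtained exactly as in the paper.
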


The function $H(n,p,t)$ in the last expression is the so-called 
\emph{Hoeffding bound} (or \emph{Hoeffding function}) on tail probabilities for sums of 
independent, bounded random variables. Throughout this paper, we will denote by $\text{Ber}(q)$ a 
Bernoulli random variable with mean $q$ and by $\text{Bin}(n,q)$ a binomial random variable of parameters $n$ and $q$. 
If two random variables $W,Z$ have the same distribution we will write $W\sim Z$. 
We remark that the Hoeffding bound is sharp, in the sense that 
the Bernoulli random variables $\text{Ber}(p_i)$ attain the bound, i.e., 
\[ \inf_{h>0} e^{-ht}\left\{\frac{1}{n} \sum_{i=1}^{n}\mathbb{E}[e^{hB_i}] \right\}^n = H(n,p,t) ,  \]
where $B_i$ is a  $\text{Ber}(p_i)$ random variable. 
The main ideas behind this work are hidden in the fact that 
\[  \prod_i\mathbb{E}\left[e^{hB_i}\right] = \mathbb{E}\left[e^{hB}\right], \]
where $B\sim \text{Bin}(n,p)$ is a binomial random variable of parameters $n$ and 
$p =\frac{1}{n}\sum_{i=1}^{n}\mathbb{E}[B_i]$, and 
the fact that the function $f(x)=e^{hx},h>0$, is non-negative, increasing and convex.
In a subsequent section we will show that, while applying the Bernstein-Hoeffding method, 
one can replace  the exponential function $e^{hx},h>0$,  
with any function $f(\cdot)$
having the aforementioned  properties.
Let us  mention  that Hoeffding  considered the tail probability
$\mathbb{P}\left[\sum_{i=1}^{n}X_i\geq np + nt'\right]$, where 
$0<t'<1-p$, instead of the tail $\mathbb{P}\left[\sum_{i=1}^{n}X_i\geq t\right]$, where $np<t<n$, thus 
obtaining a bound that looks different from the bound of the previous theorem. The reader is 
invited to verify that the above bound is the same as the bound given by formula $(2.1)$
in \cite{Hoeffdingone}. We choose to work with the 
tail $\mathbb{P}\left[\sum_{i=1}^{n}X_i\geq t\right]$ because it fits better to our goals.
A slightly looser but more widely used version of Hoeffding's bound is 
the function $\exp\left(-2n (t/n -p)^2\right)$, which follows 
from the fact that 
$H(n,p,t)\leq \exp\left(-2n \left(\frac{t}{n} -p\right)^2\right)$ (see \cite{Hoeffdingone}, formula (2.3)).\\

There exists quite some work  dedicated to improving Hoeffding's bound. 
See for example the work of Bentkus \cite{Bentkusone}, Pinelis \cite{Pinelistwo}, 
Siegel \cite{Siegelone} and Talagrand \cite{Talagrand}, just to name a few references. 
Let us bring the reader's to attention the following two results
that are extracted from the papers of  Talagrand (\cite{Talagrand}, Theorem $1.2$)
and Bentkus (\cite{Bentkusone}, Theorem $1.2$). Talagrand's  paper focuses
on obtaining some "missing" factors 
in Hoeffding's inequality whose existence is motivated by the Central Limit Theorem  
(see \cite{Talagrand}, Section $1$). These factors are obtained by combining 
the Bernstein-Hoeffding method together with a technique (i.e. suitable change of measure) that is used in 
the proof of Cram\'er's theorem on large deviations, yielding the following.\\

\begin{thm}[Talagrand, $1995$]
\label{Talagr}
Let $p_1,\ldots,p_n$ be $n$, given, real numbers from the interval $(0,1)$.
Let also the random variables $X_1,\ldots,X_n$ be independent and such that $X_i\in \mathcal{B}(p_i)$, for each 
$i=1,\ldots,n$. Set $p=\frac{1}{n}\sum_{i=1}^{n}p_i$. Then, for some  absolute 
constant, $K$, and every real number $t$ such that  $np+ K\leq t \leq np + np(1-p)/K$, we have
\[ \mathbb{P}\left[\sum_{i=1}^{n}X_i\geq t \right]\leq 
\left\{ \theta\left(\frac{t-np}{\sqrt{np(1-p)}}\right)+ \frac{K}{\sqrt{np(1-p)}} \right\}\cdot H(n,p,t)   ,\]
where $H(n,p,t)$ is the Hoeffding bound and $\theta(\cdot)$ is a non-negative function such that 
\[ \frac{1}{\sqrt{2\pi}(1+x)} \leq \frac{2}{\sqrt{2\pi}(x+\sqrt{x^2+4})} \leq 
\theta(x)\leq \frac{4}{\sqrt{2\pi}(3x+\sqrt{x^2+8})}, \; \text{for}\; x\geq 0 . \] 
\end{thm}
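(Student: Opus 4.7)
The plan is to follow the Cram\'er-type route hinted at in the paragraph preceding the theorem, pairing the Bernstein-Hoeffding bound with an exponential-tilting (change of measure) argument. For $h>0$ set $S=\sum_{i=1}^{n}X_i$ and define the tilted measure $\widetilde{\mathbb{P}}$ by $d\widetilde{\mathbb{P}}/d\mathbb{P}=e^{hS}/\mathbb{E}[e^{hS}]$; since the Radon-Nikodym derivative factorises, the $X_i$ remain independent under $\widetilde{\mathbb{P}}$. The starting identity is
\[
\mathbb{P}[S\geq t]
=
e^{-ht}\,\mathbb{E}[e^{hS}]\cdot\widetilde{\mathbb{E}}\!\left[\mathbf{1}_{\{S\geq t\}}\,e^{-h(S-t)}\right],
\]
and I would specialise to the Hoeffding-optimal $h=h^{*}=\log\bigl(\tfrac{t(1-p)}{p(n-t)}\bigr)$. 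The first factor on the right is exactly what the Bernstein-Hoeffding argument recalled in the excerpt bounds: combining the convexity bound $\mathbb{E}[e^{h^{*}X_i}]\leq 1-p_i+p_ie^{h^{*}}$ with the arithmetic-geometric and minimisation steps yields $e^{-h^{*}t}\mathbb{E}[e^{h^{*}S}]\leq H(n,p,t)$, so everything in the theorem beyond Hoeffding's bound is absorbed in the tilted factor.

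Under $\widetilde{\mathbb{P}}$ each $X_i$ is still supported in $[0,1]$, with tilted mean $\widetilde p_i=\mathbb{E}[X_ie^{h^{*}X_i}]/\mathbb{E}[e^{h^{*}X_i}]$ and tilted variance $\widetilde\sigma_i^{2}\leq\widetilde p_i(1-\widetilde p_i)\leq 1/4$. In the Bernoulli benchmark $X_i=B_i$ one has the clean identities $\sum\widetilde p_i=t$ and $\sum\widetilde\sigma_i^{2}=t(1-t/n)$, which lies in $[c,C]\cdot np(1-p)$ under the hypothesis $np+K\leq t\leq np+np(1-p)/K$; comparable two-sided bounds for general $X_i\in\mathcal{B}(p_i)$ can be extracted by exploiting the $[0,1]$ support together with the prescribed mean $p_i$. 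Writing $\sigma:=\bigl(\sum\widetilde\sigma_i^{2}\bigr)^{1/2}$, a Berry-Esseen estimate on the normalised tilted sum $(S-\widetilde{\mathbb{E}}[S])/\sigma$, followed by integration by parts against the exponential weight $e^{-h^{*}(S-t)}$, delivers
\[
\widetilde{\mathbb{E}}\!\left[\mathbf{1}_{\{S\geq t\}}e^{-h^{*}(S-t)}\right]
\leq
e^{(h^{*})^{2}\sigma^{2}/2}\,\mathbb{P}[N\geq h^{*}\sigma]+\frac{C_{1}}{\sqrt{np(1-p)}},
\]
with $N$ standard normal and $C_{1}$ absolute. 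The Gaussian term equals $m(h^{*}\sigma)/\sqrt{2\pi}$, where $m$ denotes the Mills ratio; a first-order expansion of $h^{*}$ around $p$ gives $h^{*}\sigma=(1+o(1))(t-np)/\sqrt{np(1-p)}$, and taking $\theta(x):=m(x)/\sqrt{2\pi}$ (which by a routine computation satisfies the two Mills-type inequalities stated in the theorem) completes the identification.

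The main obstacle is uniformity: every constant entering Berry-Esseen, the variance control, and the first-order expansion of $h^{*}$ must be chosen independently of the particular $X_i\in\mathcal{B}(p_i)$ and of $p_1,\ldots,p_n$. Without the two-sided range $np+K\leq t\leq np+np(1-p)/K$ one loses either the lower bound on the tilted variance (for $t$ too close to $np$) or the upper bound on the tilted third moment entering Berry-Esseen (for $t$ too close to $n$), which is exactly why the hypothesis takes that shape. A subsidiary issue is that for essentially-degenerate $X_i$ the tilted variance can collapse, but in that regime $\widetilde{\mathbb{E}}[\mathbf{1}_{\{S\geq t\}}e^{-h^{*}(S-t)}]$ already vanishes or is dominated by the Berry-Esseen error, so the Gaussian route only needs to be pushed through where the tilted variance is non-degenerate.
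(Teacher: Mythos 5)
You should first note that the paper does not prove this statement at all: it is quoted from Talagrand's article, and the authors explicitly refer the reader to \cite{Talagrand} both for the proof and for the precise definition of $\theta(\cdot)$. So the only comparison available is with Talagrand's own argument, and your outline does follow that strategy (tilt at the Hoeffding-optimal $h^{*}$, split off $e^{-h^{*}t}\mathbb{E}[e^{h^{*}S}]\leq H(n,p,t)$, and bound the remaining tilted factor by a normal-approximation/Mills-ratio estimate), which is exactly the ``change of measure as in Cram\'er's theorem'' route the paper alludes to.

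As a proof, however, what you have written has genuine gaps, and they sit precisely where the work is. First, the displayed bound $\widetilde{\mathbb{E}}[\mathbf{1}_{\{S\geq t\}}e^{-h^{*}(S-t)}]\leq e^{(h^{*})^{2}\sigma^{2}/2}\,\mathbb{P}[N\geq h^{*}\sigma]+C_{1}/\sqrt{np(1-p)}$ is the entire content of the theorem and is asserted rather than derived; the natural Berry--Esseen error for the tilted sum is of order $C/\sigma$ with $\sigma$ the tilted standard deviation of the \emph{actual} $X_i$'s, and for general $X_i\in\mathcal{B}(p_i)$ this $\sigma$ can be arbitrarily small (e.g.\ $X_i\equiv p_i$ gives $\sigma=0$), so your error term $C_{1}/\sqrt{np(1-p)}$ does not come for free and your remark that the degenerate regime is ``dominated by the Berry--Esseen error'' is backwards: in that regime the error bound you would actually obtain blows up, and a separate argument is needed. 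Second, the ``clean identities'' $\sum_i\widetilde{p}_i=t$ and $\sum_i\widetilde{\sigma}_i^{2}=t(1-t/n)$ hold only when all $p_i$ are equal (the binomial case); with heterogeneous $p_i$ and the single aggregate $h^{*}$ the tilted Bernoulli mean is not $t$, so even the benchmark case needs a quantitative two-sided comparison, not an identity. Third, the step $h^{*}\sigma=(1+o(1))(t-np)/\sqrt{np(1-p)}$ is an asymptotic statement, whereas the theorem is a finite-$n$ inequality with an absolute constant $K$; every such expansion must be replaced by explicit two-sided bounds valid exactly on the window $np+K\leq t\leq np+np(1-p)/K$, and controlling these constants uniformly over $\mathcal{B}(p_1),\ldots,\mathcal{B}(p_n)$ is the part of Talagrand's paper that your sketch leaves untouched. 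Until those three points are supplied, the proposal is a correct plan but not a proof.
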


See \cite{Talagrand} for a proof of this theorem and for a precise definition of the function $\theta(\cdot)$.
In other words, Talagrand's result improves upon Hoeffding's by inserting a "missing" factor 
of order $\approx \frac{1}{t}$ in the Hoeffding bound. Notice  that Talagrand's result 
holds true for $t\in [np+ K, np+ np(1-p)/K]$, for some
absolute constant $K$ whose value does \emph{not} seem to be known. Talagrand (see \cite{Talagrand}, page 692) 
mentions that one can obtain a rather small numerical value for $K$, but numerical 
computations are left to others with the talent for it. One of the purposes of this 
paper is to improve upon Hoeffding's inequality by obtaining "missing" factors with exact numerical values for the constants. \\
Part of Bentkus' paper performs comparisons between $\mathbb{P}\left[\sum_{i=1}^{n}X_i\geq t \right]$ and tails of binomial 
and Poisson random variables.
A crucial idea in the results of \cite{Bentkusone} is to compare $\mathbb{P}\left[\sum_{i=1}^{n}X_i\geq t \right]$
with means of particular functions of certain random variables. In particular, in the proof 
of Theorem $1.2$ in \cite{Bentkusone} one can find the following result.\\

\begin{thm}[Bentkus, $2004$]
\label{Ben} 
Let the random variables $X_1,\ldots,X_n$ be independent and such that $0\leq X_i\leq 1$, for each $i=1,\ldots,n$.
Set $p = \frac{1}{n}\sum_{i=1}^{n}\mathbb{E}[X_i]$. Then, for any positive real, $t$, such that
$np < t < n$, we have
\[ \mathbb{P}\left[\sum_{i=1}^{n}X_i  \geq t \right] \leq   \inf_{a<t} \frac{1}{t-a} \mathbb{E}\left[\max\{0, B-a\} \right] ,\]
where $B\sim \text{Bin}(n,p)$. Furthermore, if $t$ is additionally assumed 
to be a \emph{positive integer}, we have
\[ \mathbb{P}\left[\sum_{i=1}^{n}X_i  \geq t \right] \leq e \cdot\mathbb{P}\left[ B \geq t\right],  \]
where $e=2.718\ldots$.
\end{thm}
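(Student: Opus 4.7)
The plan is to prove both inequalities by running the Bernstein-Hoeffding method with the non-negative, non-decreasing, convex function $f_a(y)=\max\{0,\,y-a\}$ in place of the exponential $e^{hy}$. As the discussion preceding the statement indicates, this substitution is admissible because $f_a$ has all the properties (non-negativity, monotonicity, convexity) required.

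For the first inequality, I would start from the pointwise Markov-type bound $\mathbf{1}[y\geq t]\leq \max\{0,y-a\}/(t-a)$, valid for every $y\in\mathbb{R}$ and every $a<t$, apply it with $y=\sum_i X_i$, and take expectations. To control the resulting numerator I would chain two convex-order comparisons: first, since $f(x)\leq(1-x)f(0)+xf(1)$ for every convex $f$ on $[0,1]$, each $X_i$ is dominated in convex order by $B_i\sim\mathrm{Ber}(p_i)$, and since independent convolutions preserve convex order one gets $\sum_i X_i\leq_{cx}\sum_i B_i$; second, Hoeffding's classical result that every Poisson-binomial is dominated in convex order by the binomial with the same total mean yields $\sum_i B_i\leq_{cx}B$ with $B\sim\mathrm{Bin}(n,p)$. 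Applying the composed ordering to the convex function $f_a$ gives $\mathbb{E}[(\sum_i X_i-a)_+]\leq\mathbb{E}[(B-a)_+]$, and taking $\inf_{a<t}$ delivers the first assertion.

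For the second assertion, assume that $t$ is a positive integer. By the first inequality it suffices to exhibit an $a^*<t$ with $\mathbb{E}[(B-a^*)_+]/(t-a^*)\leq e\cdot\mathbb{P}[B\geq t]$. The natural candidate is the minimizer of $g(a):=\mathbb{E}[(B-a)_+]/(t-a)$. Using $\frac{d}{da}\mathbb{E}[(B-a)_+]=-\mathbb{P}[B>a]$, one checks that on each interval between consecutive integers the sign of $g'$ is governed by whether $\mathbb{E}[B\mid B>a]$ exceeds $t$; since $k\mapsto\mathbb{E}[B\mid B\geq k]$ is non-decreasing, the minimum is attained at the integer $k^*$ characterized by $\mathbb{E}[B\mid B\geq k^*]\leq t<\mathbb{E}[B\mid B\geq k^*+1]$. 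The left-hand inequality rearranges to $g(k^*)\leq\mathbb{P}[B\geq k^*]$, so the problem reduces to the purely binomial ratio estimate $\mathbb{P}[B\geq k^*]/\mathbb{P}[B\geq t]\leq e$.

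The main obstacle is this last ratio bound with the precise constant $e$. Log-concavity of the binomial survival function $k\mapsto\mathbb{P}[B\geq k]$ (inherited from the log-concavity of the binomial pmf) forces the decay between $k^*$ and $t$ to be essentially geometric, and the two inequalities characterizing $k^*$ pin down how far $k^*$ can lie below $t$; the constant $e$ should emerge from an optimization of the form $\sup_{m\geq 1}(1+1/m)^m=e$ applied to these tail ratios. Carrying out this discrete estimate tightly enough so that the numerical constant comes out to be \emph{exactly} $e$, rather than some weaker $O(1)$ factor, is the delicate computational step.
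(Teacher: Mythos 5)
Your handling of the first inequality is correct and is essentially the paper's own route: Markov's inequality applied to the hinge function $f_a(y)=\max\{0,y-a\}\in\mathcal{F}_{ic}(t)$, then the convex-order chain $X_i\leq_{cx}\text{Ber}(p_i)$, closure of $\leq_{cx}$ under independent sums, and Hoeffding's comparison of a Poisson-binomial sum with $\text{Bin}(n,p)$; this is exactly the content of Lemmas \ref{coupling}, \ref{convexone}, \ref{convextwo} and Theorem \ref{maintwo}, combined with the observation (Corollary \ref{Yuyiproof}) that the hinge functions suffice.

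For the second inequality there is a genuine gap, and it is not merely that the "delicate computational step" is left unexecuted: the statement you reduce to is false. Writing $g(a)=\mathbb{E}[(B-a)_+]/(t-a)$, your characterization $\mathbb{E}[B\mid B\geq k^*]\leq t<\mathbb{E}[B\mid B\geq k^*+1]$ of the minimizing integer is fine, and so is the rearrangement $g(k^*)\leq\mathbb{P}[B\geq k^*]$; but the target you then pose, $\mathbb{P}[B\geq k^*]\leq e\,\mathbb{P}[B\geq t]$, fails in general. Take $n=100$, $p=0.01$, $t=50$ (so $np=1<t<n$). Conditionally on $B\geq 49$ the variable $B$ is almost surely $49$, so $\mathbb{E}[B\mid B\geq 49]\approx 49.01\leq t<\mathbb{E}[B\mid B\geq 50]$, i.e.\ $k^*=t-1=49$, while $\mathbb{P}[B\geq 49]/\mathbb{P}[B\geq 50]\approx\mathbb{P}[B=49]/\mathbb{P}[B=50]=\frac{t(1-p)}{(n-t+1)p}\approx 97\gg e$. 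The loss occurs precisely at the step $g(k^*)\leq\mathbb{P}[B\geq k^*]$: you discard the factor $\frac{\mathbb{E}[B\mid B\geq k^*]-k^*}{t-k^*}$ (about $0.01$ in this example), and it is exactly this factor that compensates for the large tail ratio. Any proof of the constant $e$ must keep it; this estimate is Lemma $4.2$ of Bentkus' paper, which the present paper quotes rather than reproves. The paper's own related argument, Theorem \ref{binbin}, goes a different way: it fixes $a=t-1$, writes $\mathbb{E}[(B-t+1)_+]=\sum_{i\geq t}\mathbb{P}[B\geq i]$ and applies Feller's binomial tail estimate, obtaining the factor $\frac{t-tp}{t-np}$ in place of $e$. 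That is the kind of argument you could complete along your lines, but it yields a different constant, which is better than $e$ only for $t$ sufficiently far above $np$ and blows up as $t\downarrow np$, so it does not establish the second assertion of Theorem \ref{Ben} as stated.
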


The quantity on the right hand side of the first inequality is estimated in \cite{Bentkusone}, Lemma $4.2$.
We will see in the forthcoming sections that first statement 
of Bentkus' result is optimal in a slightly broader sense, i.e., it is the 
best bound that can be obtained from the inequality 
\[ \mathbb{P}\left[\sum_{i=1}^{n}X_i  \geq t\right]  \leq \frac{1}{f(t)}\mathbb{E}[f(B)] ,\]
where $f$ is a non-negative, convex and increasing function. Additionally, we will improve upon the constant $e$
of the second statement.

\subsection{Main results}

In this paper we shall be interested in employing the Bernstein-Hoeffding method to a larger class of generalized  
moments. Such approaches have been already performed by Bentkus \cite{Bentkusone}, Eaton \cite{Eaton},
Pinelis \cite{Pinelisthree},\cite{Pinelistwo}. Nevertheless, we were not able to find a systematic study of 
the classes of functions that are considered in our paper. 
We now proceed by defining a class of functions that is appropriate for the Berstein-Hoeffding method.
Let us call a function $f:[0,+\infty)\rightarrow [0,+\infty)$  \emph{sub-multiplicative} if 
$f(x+y) \leq f(x)\cdot f(y)$, for all $x,y \in [0,+\infty)$.
We will denote by $\mathcal{F}_{sic}$ the set of all functions $f:[0,+\infty)\rightarrow [0,+\infty)$
that are sub-multiplicative,  increasing and convex. Examples of such functions are $e^{hx}$, for fixed $h>0$,
$e^{hx}(1+cx)$, for fixed $h,c>0$ and so on. 
Our first  result shows that the Bernstein-Hoeffding method can be adjusted to the class $\mathcal{F}_{sic}$. \\

\begin{thm}
\label{mainHoeff} Let $\mathcal{F}_{sic}$ be defined as above.
Let the random variables $X_1,\ldots,X_n$ be independent and such that $0\leq X_i\leq 1$, for each $i=1,\ldots,n$.
Set $p = \frac{1}{n}\sum_{i=1}^{n}\mathbb{E}[X_i]$. Then, for any positive real, $t$, such that
$np < t < n$, we have
\[ \mathbb{P}\left[\sum_{i=1}^{n}X_i  \geq t \right] \leq   
\inf_{f\in \mathcal{F}_{sic}}\left\{ \frac{1}{f(t)}\left\{(1-p)f(0) + pf(1)\right\}^n\right\} . \]
\end{thm}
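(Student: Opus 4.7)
The plan is to mimic the classical Bernstein-Hoeffding argument step by step, with the function $f\in\mathcal{F}_{sic}$ playing the role of the exponential $e^{hx}$. The three structural properties of $\mathcal{F}_{sic}$ (increasing, sub-multiplicative, convex) are precisely what is needed to make each of the four steps in the derivation go through without change.

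First I would start from Markov's inequality. Because $f$ is non-negative and increasing, the event $\{\sum_i X_i\geq t\}$ is contained in $\{f(\sum_i X_i)\geq f(t)\}$, and since $f(t)>0$ (otherwise the bound is vacuous), Markov gives
\[
\mathbb{P}\Bigl[\sum_{i=1}^n X_i\geq t\Bigr]\leq \frac{1}{f(t)}\,\mathbb{E}\Bigl[f\Bigl(\sum_{i=1}^n X_i\Bigr)\Bigr].
\]
Next, I would apply sub-multiplicativity iteratively to get $f(\sum_i X_i)\leq \prod_i f(X_i)$, and then use the independence of the $X_i$ together with the fact that $f$ is a deterministic function (so $f(X_1),\ldots,f(X_n)$ are independent) to pass the expectation inside the product:
\[
\mathbb{E}\Bigl[f\Bigl(\sum_{i=1}^n X_i\Bigr)\Bigr]\leq \prod_{i=1}^n \mathbb{E}[f(X_i)].
\]

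Then I would use the arithmetic-geometric mean inequality to bound $\prod_i \mathbb{E}[f(X_i)]\leq \bigl\{\tfrac{1}{n}\sum_i \mathbb{E}[f(X_i)]\bigr\}^n$. Finally, convexity of $f$ combined with $X_i\in[0,1]$ yields the pointwise chord bound $f(X_i)\leq (1-X_i)f(0)+X_i f(1)$; taking expectations and recalling that $\mathbb{E}[X_i]=p_i$ gives $\mathbb{E}[f(X_i)]\leq (1-p_i)f(0)+p_i f(1)$. Averaging over $i$ and using $p=\tfrac{1}{n}\sum_i p_i$, the right-hand side of the AM-GM bound becomes exactly $\bigl\{(1-p)f(0)+p f(1)\bigr\}^n$. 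Chaining all estimates and taking the infimum over $f\in\mathcal{F}_{sic}$ produces the claimed inequality.

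There is essentially no hard step: the only substantive novelty over Hoeffding's original argument is the replacement of the identity $e^{h(x+y)}=e^{hx}e^{hy}$ by the inequality $f(x+y)\leq f(x)f(y)$, and this is precisely the reason for including sub-multiplicativity in the definition of $\mathcal{F}_{sic}$. The remaining ingredients (AM-GM, convex chord bound, Markov) are used exactly as in the classical proof of Theorem~\ref{folklore}, so the argument should go through verbatim once one verifies that $f$ being non-negative guarantees that the Markov step and the product bound are both valid (no sign issues arise, unlike for general convex $f$ without the positivity assumption).
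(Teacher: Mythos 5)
Your proposal is correct and follows essentially the same route as the paper: Markov's inequality for the increasing $f$, sub-multiplicativity to split the sum, independence to factor the expectation, a convexity comparison with Bernoulli means, and the arithmetic-geometric mean inequality. The only cosmetic differences are that you obtain $\mathbb{E}[f(X_i)]\leq (1-p_i)f(0)+p_i f(1)$ directly from the chord bound $f(x)\leq (1-x)f(0)+xf(1)$ on $[0,1]$ rather than via the paper's coupling argument (Lemma \ref{coupling}), and that you apply AM--GM before, rather than after, this Bernoulli comparison; both variations are immaterial.
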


We prove this result in Section \ref{mainHof}. 
Theorem \ref{mainHoeff} can be deduced using the same argument as Hoeffding's result.
Its proof ought to be somewhere in the literature but we were unable to 
locate a reference. We provide a proof 
for the sake of completeness. Additionally, we prove in Section \ref{mainHof} that Hoeffding's bound 
is the best bound that can be obtained using functions from the class $\mathcal{F}_{sic}$.
In Section \ref{improve} we extend Theorem \ref{mainHoeff}
to an even larger class of moments. More precisely, fix $t>0$ and let
$\mathcal{F}_{ic}(t)$ be the set consisting of all \emph{convex} functions $f:[0,\infty)\rightarrow [0,\infty)$ 
that are \emph{increasing} in the interval $[t,\infty)$. 
Examples of such functions are $|x-\epsilon|$ for fixed 
$\epsilon \leq t$, $\max (0, x-\epsilon)$ for fixed $\epsilon \leq t$,  $e^{hx}$ for $h>0$ and so on.
In Section \ref{improve}, by employing
ideas from the theory of \emph{convex orders}, we  obtain the following.\\

\begin{thm}\label{maintwo} 
Let the random variables $X_1,\ldots,X_n$ be independent and such that $0\leq X_i\leq 1$, for each $i=1,\ldots,n$.
Set $p = \frac{1}{n}\sum_{i=1}^{n}\mathbb{E}[X_i]$. Then, for any fixed real number, $t$, such that
$np < t < n$, we have
\[ \mathbb{P}\left[\sum_{i=1}^{n}X_i  \geq t \right] \leq   \inf_{f\in \mathcal{F}_{ic}(t)}\frac{1}{f(t)}\mathbb{E}[ f(B)] , \]
where $B\sim \text{Bin}(n,p)$ is a binomial random variable and $\mathcal{F}_{ic}(t)$ is the class of functions defined above.
\end{thm}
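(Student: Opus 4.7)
The plan is to combine Markov's inequality with two successive convex-order majorizations: first replace each $X_i$ by the Bernoulli $\mathrm{Ber}(p_i)$, then replace the Poisson-binomial sum by $B\sim\mathrm{Bin}(n,p)$. Set $S=\sum_{i=1}^{n}X_i$ and fix $f\in\mathcal{F}_{ic}(t)$ with $f(t)>0$ (values of $f$ with $f(t)=0$ can be discarded from the infimum). Since $f$ is nonnegative and increasing on $[t,\infty)$, the event $\{S\geq t\}$ is contained in $\{f(S)\geq f(t)\}$, so Markov's inequality on the nonnegative variable $f(S)$ gives
\[
\mathbb{P}[S\geq t]\leq \frac{\mathbb{E}[f(S)]}{f(t)}.
\]
The task is now reduced to bounding $\mathbb{E}[f(S)]$ by $\mathbb{E}[f(B)]$.

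For the first majorization, observe that $f$ is convex on $[0,\infty)$ and $X_i\in[0,1]$ has mean $p_i$, so the chord inequality $f(x)\leq(1-x)f(0)+xf(1)$ on $[0,1]$ yields $\mathbb{E}[f(X_i)]\leq (1-p_i)f(0)+p_if(1)=\mathbb{E}[f(B_i)]$, where $B_i\sim\mathrm{Ber}(p_i)$. In other words, $X_i\leq_{cx}B_i$. Since the convex order is preserved under convolution of independent summands (a standard fact proved by conditioning on all but one coordinate and using the convexity of $x\mapsto f(x+c)$), this gives
\[
\mathbb{E}[f(S)]\leq \mathbb{E}[f(S_B)],\qquad S_B:=\sum_{i=1}^{n}B_i,
\]
with $B_1,\dots,B_n$ independent Bernoullis.

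The second majorization is Hoeffding's classical result that $S_B\leq_{cx}B$ whenever $B\sim\mathrm{Bin}(n,p)$ with $p=\frac{1}{n}\sum p_i$: the argument is a pairwise averaging/swap. If $p_i\neq p_j$, replacing the pair $(p_i,p_j)$ by $(\bar p_{ij},\bar p_{ij})$ with $\bar p_{ij}=(p_i+p_j)/2$ weakly increases $\mathbb{E}[\phi(S_B)]$ for every convex $\phi$; this reduces to a three-atom inequality on $\{0,1,2\}$ that follows from the second-difference characterisation of convexity. Iterating this averaging drives all parameters to $p$ and produces the binomial. Applying this with $\phi=f$ yields $\mathbb{E}[f(S_B)]\leq\mathbb{E}[f(B)]$, and chaining the three inequalities gives
\[
\mathbb{P}[S\geq t]\leq\frac{\mathbb{E}[f(S)]}{f(t)}\leq\frac{\mathbb{E}[f(S_B)]}{f(t)}\leq\frac{\mathbb{E}[f(B)]}{f(t)}.
\]
Taking the infimum over $f\in\mathcal{F}_{ic}(t)$ then proves the theorem.

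The hardest ingredient is the last step, Hoeffding's binomial majorization, which is where independence of the $X_i$ genuinely enters; the other two links are essentially Markov and a one-variable chord inequality together with the standard closure of $\leq_{cx}$ under independent convolution. Note that $f$ need not be submultiplicative or monotone on $[0,t)$, so the tensorisation-plus-AM-GM route used for Theorem~\ref{mainHoeff} is unavailable here, and the convex-order viewpoint is what makes this broader class of test functions admissible.
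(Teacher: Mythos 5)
Your proof is correct and follows essentially the same route as the paper: Markov's inequality using that $f$ is nonnegative and increasing on $[t,\infty)$, then $X_i\leq_{cx}\mathrm{Ber}(p_i)$ via the chord inequality (the paper's Lemma \ref{coupling}), closure of $\leq_{cx}$ under independent convolution (Lemma \ref{convexone}), and finally Hoeffding's domination of the Poisson-binomial sum by $\mathrm{Bin}(n,p)$ (Lemma \ref{convextwo}). The only difference is that you sketch proofs of the two convex-order lemmas where the paper cites Shaked--Shanthikumar and Hoeffding, and you add the harmless remark about discarding $f$ with $f(t)=0$ from the infimum.
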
 

In Section \ref{optopt} we show that the functions $f\in \mathcal{F}_{ic}(t)$ that minimise $\frac{1}{f(t)}\mathbb{E}[f(B)]$
are those used in the aforementioned result of Bentkus, i.e., Theorem \ref{Ben}.
We then choose a particular function $\phi \in \mathcal{F}_{ic}(t)$ and obtain
a version of Talagrand's result having exact numerical constants. 
More precisely, in Section \ref{missingtwo}, we prove the following improvement upon Hoeffding's inequality.\\

\begin{thm}\label{Talagrtwo} 
Let the random variables $X_1,\ldots,X_n$ be independent and such that $0\leq X_i\leq 1$, for each $i=1,\ldots,n$.
Set $p = \frac{1}{n}\sum_{i=1}^{n}\mathbb{E}[X_i]$. Let $t$ be a fixed \emph{positive integer} such that
$\frac{enp}{ep-p+1} \leq t < n$. 
Then
\[ \mathbb{P}\left[\sum_{i=1}^{n}X_i  \geq t \right] \leq \frac{1+h}{e^h}\cdot\left( H(n,p,t)- T(n,p,t;h) \right) + 
\left(1-\frac{1+h}{e^h}\right) \mathbb{P}\left[B_{n,p} =t\right], \]
where $H(n,p,t)$ is the Hoeffding function, $B_{n,p}$ is a binomial random variable of parameters $n$ and $p$, 
\[ T(n,p,t;h)= \sum_{i=0}^{t-1} e^{h(i-t)} \mathbb{P}\left[B_{n,p}=i\right] , \]
and $h$ is such that $e^h=\frac{t(1-p)}{p(n-t)}$, i.e., it is the optimal real such that 
\[ \frac{1}{e^{ht}}\mathbb{E}[e^{hB}] = \inf_{s>0}\; \frac{1}{e^{st}} \mathbb{E}[e^{sB}], \]
with $B\sim \text{Bin}(n,p)$.
\end{thm}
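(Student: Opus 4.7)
The plan is to apply Theorem \ref{maintwo} with a carefully designed piecewise function $\phi\in\mathcal{F}_{ic}(t)$, chosen so that $\mathbb{E}[\phi(B)]/\phi(t)$ equals the right-hand side of the stated bound exactly.

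As a preliminary observation, I would first translate the hypothesis $\frac{enp}{ep-p+1}\le t<n$ into a more transparent form. A short algebraic manipulation shows that it is equivalent to $\frac{t(1-p)}{p(n-t)}\ge e$, which, by the definition of $h$ given in the statement, is the same as $h\ge 1$. This is the single place where the lower bound on $t$ will enter the proof: it is exactly what is needed to guarantee convexity at one of the junction points of the test function below.

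Next, I would introduce the test function $\phi:[0,\infty)\to[0,\infty)$ defined piecewise by
\[ \phi(x)=\begin{cases} 0, & 0\le x\le t-1,\\ x-(t-1), & t-1\le x\le t,\\ 1+h(x-t), & t\le x\le t+1,\\ (1+h)\,e^{h(x-t-1)}, & x\ge t+1.\end{cases} \]
Continuity at the three junctions $t-1,t,t+1$ is immediate. On each piece $\phi$ is either constant, linear or a scaled exponential, hence convex. At the junctions the pairs of left/right derivatives are $(0,1)$, $(1,h)$ and $(h,h(1+h))$; the first and third inequalities are trivial, and the second, $1\le h$, is precisely the hypothesis. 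Thus $\phi$ is convex on $[0,\infty)$, non-negative, and increasing on $[t-1,\infty)\supset[t,\infty)$, so $\phi\in\mathcal{F}_{ic}(t)$, and Theorem~\ref{maintwo} applies.

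It then remains a direct computation to evaluate $\mathbb{E}[\phi(B)]/\phi(t)$. Since $\phi(i)=0$ for integers $i\le t-1$, $\phi(t)=1$, and $\phi(t+k)=(1+h)e^{h(k-1)}=\tfrac{1+h}{e^h}e^{hk}$ for integers $k\ge 1$, one obtains
\[ \mathbb{E}[\phi(B)]=\mathbb{P}[B=t]+\frac{1+h}{e^h}\sum_{i=t+1}^{n}e^{h(i-t)}\mathbb{P}[B=i]. \]
Using the optimality of $h$ one has $\sum_{i=0}^{n}e^{h(i-t)}\mathbb{P}[B=i]=e^{-ht}\mathbb{E}[e^{hB}]=H(n,p,t)$, so splitting the sum at $i=t$ yields $\sum_{i=t}^{n}e^{h(i-t)}\mathbb{P}[B=i]=H(n,p,t)-T(n,p,t;h)$. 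Substituting this identity into the displayed expression for $\mathbb{E}[\phi(B)]$ and grouping the $\mathbb{P}[B=t]$ terms produces precisely the bound claimed in Theorem~\ref{Talagrtwo}.

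The only non-routine step is the design of $\phi$, and even that is essentially forced: the values at the integers $t,t+1,t+2,\ldots$ are dictated by the shape of the target bound, and the linear bridge on $[t-1,t]$ is the minimal convex extension of the right-hand piece down to $0$. The sole delicate point is that the resulting kink at $x=t$ is convex only when $h\ge 1$, which is why the theorem's hypothesis on $t$ takes exactly this form.
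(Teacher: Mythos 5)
Your proof is correct. It follows the same overall strategy as the paper — apply Theorem \ref{maintwo} to an explicit test function built around the optimal Hoeffding parameter $h$, with the hypothesis on $t$ entering precisely as $h\ge 1$ — but the test function and the way the bound is extracted differ. The paper takes the hinge function $f(x)=\max\{0,h(x-t)+1\}$ and then compares $\mathbb{E}[f(B)]$ with $H(n,p,t)=\sum_{i=0}^{n}e^{h(i-t)}\mathbb{P}[B=i]$ term by term, using that $(1+x)e^{-x}$ is decreasing to get $e^{h(i-t)}-(h(i-t)+1)\ge\bigl(1-\tfrac{1+h}{e^h}\bigr)e^{h(i-t)}$ for $i\ge t+1$; there, $h\ge 1$ is used to kill the terms $i\le t-1$ of $\mathbb{E}[f(B)]$. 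You instead design the hybrid linear--exponential function $\phi$, for which $h\ge 1$ is exactly the convexity condition at the kink $x=t$, and evaluate $\mathbb{E}[\phi(B)]$ exactly, so no analytic estimate is needed beyond the identity $e^{-ht}\mathbb{E}[e^{hB}]=H(n,p,t)$ at the optimal $h$. The two are tightly linked: since $1+hk\le\tfrac{1+h}{e^h}e^{hk}$ for integers $k\ge 1$, your $\phi$ dominates the paper's $f$ at the integers, so the paper's intermediate quantity $\mathbb{E}[f(B)]$ is in fact at least as sharp as the stated bound, and its argument also yields the strict comparison $\mathbb{E}[f(B)]<H(n,p,t)$ used in the surrounding discussion; your route buys a cleaner, estimate-free derivation showing the stated bound is exactly $\mathbb{E}[\phi(B)]/\phi(t)$ for a member of $\mathcal{F}_{ic}(t)$, at the cost of that extra information. (One trivial point you leave implicit: $\frac{enp}{ep-p+1}>np$ for $p<1$, so the hypothesis indeed places $t$ in the range $np<t<n$ required by Theorem \ref{maintwo}.)
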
 

Let us illustate that 
the bound of the previous result is an improvement upon Hoeffding's inequality. Indeed, notice that  
the bound of the previous Theorem is 
\[ \leq \frac{1+h}{e^h}\cdot H(n,p,t) + \left(1-\frac{1+h}{e^h}\right) \mathbb{P}\left[B_{n,p} =t\right] \]
and the later quantity is a convex combination of $H(n,p,t)$ and $\mathbb{P}\left[B_{n,p} =t\right]$. 
Now Hoeffding's Theorem \ref{folklore} implies that
\[ \mathbb{P}\left[B_{n,p} =t\right]\leq \mathbb{P}\left[B_{n,p} \geq t\right] \leq H(n,p,t) \]
and therefore the bound of the previous result is \emph{smaller} than Hoeffding's. \\
In brief, the previous result improves upon Hoeffding's by adding a ``missing'' factor that is equal 
to $\frac{1+h}{e^h}<1$. Since  $e^h=\frac{t(1-p)}{p(n-t)}$ it follows that the ``missing'' factor can be written as
\[ \frac{1+h}{e^h} = \frac{p}{1-p}\left(\frac{n}{t}-1\right) \left(1+ \ln \frac{1-p}{p(n/t -1)}\right) . \]
On the other hand, Talagrand's result provides a factor that is approximately 
\[ \frac{\sqrt{np(1-p)}}{\sqrt{2\pi} (\sqrt{np(1-p)}+t-np)} + \frac{K}{\sqrt{np(1-p)}} . \]
Is it  unclear how to compare the two factors without knowing the constant $K$. If we assume that $K\approx 0$ then
elementary, though quite tedious, calculations show that
Talagrand's bound is sharper than the bound of Theorem \ref{Talagrtwo}. 
Our bound has the advantage that it does \emph{not} involve unknown constants and that it is obtained 
using a rather simple argument.\\
Using Theorem \ref{maintwo} we can also  obtain the following, partial, improvement upon the 
second statement of Bentkus' result, i.e., Theorem \ref{Ben}. \\

\begin{thm}
\label{binbin} Let the random variables $X_1,\ldots,X_n$ be independent and such that $0\leq X_i\leq 1$, 
for each $i=1,\ldots,n$.
Set $p = \frac{1}{n}\sum_{i=1}^{n}\mathbb{E}[X_i]$. Then, for any fixed \emph{positive integer}, $t$, such that
$np < t < n$, we have
\[ \mathbb{P}\left[\sum_{i=1}^{n}X_i  \geq t \right] \leq   \frac{t-tp}{t-np}\cdot\mathbb{P}[ B\geq t] , \]
where $B\sim \text{Bin}(n,p)$ is a binomial random variable.
\end{thm}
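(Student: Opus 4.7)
The plan is to apply Theorem \ref{maintwo} to the specific choice $f(x) = \max(0, x - (t-1))$, which is non-negative, convex, and linearly increasing on $[t-1, \infty)$, so that it lies in $\mathcal{F}_{ic}(t)$, and which satisfies $f(t) = 1$. Theorem \ref{maintwo} then immediately gives
\[
\mathbb{P}\left[\sum_{i=1}^n X_i \geq t\right] \leq \mathbb{E}\left[\max(0, B - (t-1))\right].
\]
Since $t$ is a positive integer and $B$ is integer-valued, the pointwise identity $\max(0, B - (t-1)) = \mathbf{1}_{B \geq t} + (B - t)^+$ reduces the task to proving
\[
\mathbb{E}[(B - t)^+] \leq \frac{p(n - t)}{t - np}\,\mathbb{P}[B \geq t];
\]
the routine algebraic identity $1 + \frac{p(n-t)}{t-np} = \frac{t(1-p)}{t-np}$ then delivers the claimed bound.

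The core of the argument is to establish this tail inequality via log-concavity of the binomial probability mass function. Let $\rho_k := \mathbb{P}[B = k+1]/\mathbb{P}[B = k] = \frac{(n-k)p}{(k+1)(1-p)}$. This quantity is strictly decreasing in $k$, and the assumption $t > np$ guarantees $\rho_k < 1$ for every $k \geq t$. Because $\rho_j \leq \rho_k$ for $j \geq k$, one has $\mathbb{P}[B = j] \leq \rho_k^{\,j-k}\,\mathbb{P}[B = k]$ whenever $j \geq k$, and summing the resulting geometric series gives $\mathbb{P}[B \geq k] \leq \mathbb{P}[B = k]/(1 - \rho_k)$; rearranging produces the one-step contraction $\mathbb{P}[B \geq k+1] \leq \rho_k\,\mathbb{P}[B \geq k]$. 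Iterating this bound and using $\rho_{t+j} \leq \rho_t$ at each step yields $\mathbb{P}[B \geq t + j] \leq \rho_t^{\,j}\,\mathbb{P}[B \geq t]$ for all $j \geq 0$.

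Combining the standard tail identity $\mathbb{E}[(B-t)^+] = \sum_{j \geq 1} \mathbb{P}[B \geq t + j]$ with the geometric bound above yields $\mathbb{E}[(B-t)^+] \leq \frac{\rho_t}{1 - \rho_t}\,\mathbb{P}[B \geq t]$, and a direct simplification gives $\frac{\rho_t}{1 - \rho_t} = \frac{p(n - t)}{(t - np) + (1 - p)}$, which is manifestly at most $\frac{p(n - t)}{t - np}$. Assembling these pieces with the reduction from the first paragraph finishes the proof. I expect the only non-routine step to be the one-step contraction $\mathbb{P}[B \geq k+1] \leq \rho_k\,\mathbb{P}[B \geq k]$; once that is in hand, everything else reduces to checking a couple of short identities.
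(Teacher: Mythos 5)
Your proof is correct, and it begins exactly as the paper does: you pick the same function $\psi(x)=\max\{0,x-t+1\}$, invoke Theorem \ref{maintwo} to pass to $\mathbb{E}[\psi(B)]$ with $B\sim\mathrm{Bin}(n,p)$, and then everything hinges on controlling the binomial tail sums (your identity $\mathbb{E}[\psi(B)]=\mathbb{P}[B\geq t]+\mathbb{E}[(B-t)^{+}]$ is just a regrouping of the paper's $\mathbb{E}[\psi(B)]=\sum_{i=t}^{n}\mathbb{P}[B\geq i]$). Where you diverge is in how that tail control is obtained: the paper quotes Feller's estimate $\mathbb{P}[B\geq i]\leq \frac{i-ip}{i-np}\,\mathbb{P}[B=i]$ for $i>np$, applies it termwise, and uses monotonicity of the coefficient $\frac{i(1-p)}{i-np}$ in $i$; you instead prove the needed decay from scratch via the pmf ratio $\rho_k=\frac{(n-k)p}{(k+1)(1-p)}$, deriving the one-step contraction $\mathbb{P}[B\geq k+1]\leq\rho_k\,\mathbb{P}[B\geq k]$ and summing the geometric series to get $\mathbb{E}[(B-t)^{+}]\leq\frac{p(n-t)}{(t-np)+(1-p)}\,\mathbb{P}[B\geq t]$, which you then relax to $\frac{p(n-t)}{t-np}\,\mathbb{P}[B\geq t]$ and recombine via $1+\frac{p(n-t)}{t-np}=\frac{t(1-p)}{t-np}$. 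All of these steps check out ($\rho_k<1$ for $k\geq t>np$, the geometric summation, and the algebraic identities). What your route buys is self-containedness — no external citation is needed, and your intermediate constant $\frac{p(n-t)}{(t-np)+(1-p)}$ is in fact marginally sharper than what the stated bound requires — at the cost of a slightly longer argument; the paper's route is shorter because it outsources the tail estimate to Feller's book.
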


Note that for large $t$, say $t>\frac{2np}{1+p}$, the previous result
gives an estimate for which $\frac{t-tp}{t-np}<2$. However, for values of $t$ that 
are close to $np$, the previous result provides estimates for which $\frac{t-tp}{t-np}$ can be arbitrarily large. \\
In Section \ref{BernHoeff} we generalise the Bernstein-Hoeffding method to sums of bounded, independent random variables 
for which the first $m$ moments are known. More precisely, for given real numbers $\mu_1,\ldots, \mu_m\in (0,1)$, let 
$\mathcal{B}(\mu_1,\ldots,\mu_m)$ be the set of all $[0,1]$-valued random variables whose $i$-th moment equals 
$\mu_i, i=1,\ldots, m$. Formally, 
\[ \mathcal{B}(\mu_1,\ldots,\mu_m):= 
\{X: 0\leq X\leq 1, \mathbb{E}[X]=\mu_1,\mathbb{E}[X^2]=\mu_2\ldots, \mathbb{E}[X^m]=\mu_m\} . \]

Notice that the set may be empty.
Note also that, if $\mathcal{B}(\mu_1,\ldots,\mu_m)$ is non-empty then we have $\mu_1\geq \mu_2,\geq \cdots \geq \mu_m$.
Recall the definition of the class $\mathcal{F}_{sic}$, defined above.
The main result of Section \ref{BernHoeff} is the following.\\

\begin{thm}
\label{moments} Fix positive integers, $n,m \geq 2$ and for $i=1,\ldots,n$ let $\{\mu_{ij}\}_{j=1}^{m}$ be a sequence of 
real numbers such that $1>\mu_{i1}\geq \cdots\geq \mu_{im}>0$ and for which the class $\mathcal{B}(\mu_{i1},\ldots,\mu_{im})$ 
is non-empty. Let $X_1,\ldots,X_n$ be independent random variables such that $X_i\in \mathcal{B}(\mu_{i1},\ldots,\mu_{im})$,
for $i=1,\ldots,n$, and fix $t\in [0,n]$. Then 
\[ \mathbb{P}\left[\sum_{i=1}^{n}X_i\geq t \right] \leq 
\inf_{f\in \mathcal{F}_{sic}} \;\frac{1}{f(t)} \left\{\mathbb{E}\left[ f(T_{nm})\right]\right\}^n, \]
where 
$T_{nm}$ is the random variable that takes on values in the set $\{\frac{0}{m},\frac{1}{m},\ldots,\frac{m}{m}\}$ and,
for $j=0,1,\ldots,m$, it satisfies
\[\mathbb{P}\left[T_{nm}=\frac{j}{m}\right] = \frac{1}{n}\sum_{i=1}^{n}\binom{m}{j} \mathbb{E}\left[X_i^j(1-X_i)^{m-j}\right] . \]
\end{thm}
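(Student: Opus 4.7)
The plan is to emulate, step by step, the Bernstein-Hoeffding method that led to Theorem \ref{mainHoeff}, but with one modification: in the step where Hoeffding dominated $\mathbb{E}[f(X_i)]$ by $\mathbb{E}[f(B_i)]$ using convexity and only first-moment information, we will instead invoke a classical \emph{Bernstein-polynomial} domination that uses the first $m$ moments of $X_i$. The relevant fact is that for a convex function $f:[0,1]\to[0,\infty)$ its degree-$m$ Bernstein polynomial
\[
B_m(f)(x)=\sum_{j=0}^{m}\binom{m}{j}x^{j}(1-x)^{m-j}f\!\left(\tfrac{j}{m}\right)
\]
satisfies $f(x)\leq B_m(f)(x)$ on $[0,1]$; this is standard and follows from applying Jensen's inequality to a $\text{Bin}(m,x)/m$ random variable. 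Since every $f\in\mathcal{F}_{sic}$ restricted to $[0,1]$ is convex, this bound is available to us.

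First I would fix $f\in\mathcal{F}_{sic}$ and write, using that $f$ is non-negative and increasing on $[0,\infty)$, the standard Markov step
\[
\mathbb{P}\!\left[\sum_{i=1}^{n}X_i\geq t\right]\leq \frac{\mathbb{E}\!\left[f\!\left(\sum_{i=1}^{n}X_i\right)\right]}{f(t)}.
\]
Sub-multiplicativity of $f$ (applied inductively) together with the non-negativity of the $X_i$ gives $f(\sum_i X_i)\leq \prod_i f(X_i)$, and independence then yields
\[
\mathbb{E}\!\left[f\!\left(\sum_{i=1}^{n}X_i\right)\right]\leq \prod_{i=1}^{n}\mathbb{E}[f(X_i)].
\]
Since each $\mathbb{E}[f(X_i)]$ is non-negative, AM-GM produces
\[
\prod_{i=1}^{n}\mathbb{E}[f(X_i)]\leq \left(\frac{1}{n}\sum_{i=1}^{n}\mathbb{E}[f(X_i)]\right)^{n}.
\]

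The new ingredient enters here: applying the Bernstein-polynomial bound $f(X_i)\leq B_m(f)(X_i)$ pointwise on $[0,1]$ and then taking expectations gives
\[
\mathbb{E}[f(X_i)]\leq \sum_{j=0}^{m}\binom{m}{j}f\!\left(\tfrac{j}{m}\right)\mathbb{E}\!\left[X_i^{j}(1-X_i)^{m-j}\right].
\]
Averaging this inequality over $i=1,\ldots,n$ and interchanging the two sums shows
\[
\frac{1}{n}\sum_{i=1}^{n}\mathbb{E}[f(X_i)]\leq \sum_{j=0}^{m} f\!\left(\tfrac{j}{m}\right)\cdot \frac{1}{n}\sum_{i=1}^{n}\binom{m}{j}\mathbb{E}\!\left[X_i^{j}(1-X_i)^{m-j}\right]=\mathbb{E}[f(T_{nm})],
\]
by the very definition of the distribution of $T_{nm}$. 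Chaining the four inequalities and taking the infimum over $f\in\mathcal{F}_{sic}$ yields the claimed bound.

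The only non-routine step is the Bernstein-polynomial domination of a convex function, which is where the assumption $\mu_{ij}=\mathbb{E}[X_i^j]$ is encoded into the bound through the mixed moments $\mathbb{E}[X_i^{j}(1-X_i)^{m-j}]$ (these are determined by $\mu_{i1},\ldots,\mu_{im}$ by expanding $(1-X_i)^{m-j}$). Everything else is a mechanical repetition of the Bernstein-Hoeffding chain: Markov, sub-multiplicativity, independence, AM-GM. One should also briefly remark that $T_{nm}$ is a bona fide probability distribution, i.e.\ that the weights $\mathbb{P}[T_{nm}=j/m]$ sum to one, which is immediate from $\sum_{j=0}^{m}\binom{m}{j}x^{j}(1-x)^{m-j}=1$ applied inside the average over $i$.
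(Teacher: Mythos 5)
Your proposal is correct and follows essentially the same route as the paper: Markov's inequality, sub-multiplicativity, independence, AM--GM, and then the pointwise Bernstein-polynomial domination $f(x)\leq B_m(f,x)$ for convex $f$ (the paper's Lemma on Bernstein polynomials), with the averaged Bernstein coefficients identified as the distribution of $T_{nm}$. The only difference is cosmetic: the paper cites the Bernstein-polynomial inequality from Phillips' book, whereas you sketch its Jensen-inequality proof.
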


To our knowledge, this is the first result that considers the performance of the method under additional 
information on higher moments. 
Notice that the probability distribution of the 
random variable $T_{nm}$ does \emph{not} depend on the random variables $X_1,\ldots,X_n$. 
Indeed, using the binomial formula, it is easy to see that 
\[ \mathbb{E}\left[X_i^j(1-X_i)^{m-j}\right] = \sum_{k=0}^{m-j}\binom{m-j}{k}(-1)^{m-j-k} \mu_{i,m-k} \] 
and so $T_{nm}$ is uniquely determined by the given sequences on moments $\{\mu_{ij}\}_{i,j}$. 
We will refer to the random variable that takes values on the set $\{0,\frac{1}{m},\ldots,\frac{m}{m}\}$ with probability
$\binom{m}{j} \mathbb{E}\left[X_i^j(1-X_i)^{m-j}\right]$, for $X_i\in \mathcal{B}(\mu_{i1},\ldots,\mu_{im})$, as a
$\mathcal{B}(\mu_{i1},\ldots,\mu_{im})$-\emph{Bernstein random variable}.  Let us also mention that 
Bernstein random variables occur in the study of the so-called \emph{Hausdorff moment problem} (see Feller \cite{Fellertwo}).
A similar result holds true for the class $\mathcal{F}_{ic}(t)$; we state this result in Section \ref{BernHoeff}
and sketch its proof. In Section \ref{refinebinbin} we perform comparisons 
between $\mathbb{P}\left[\sum_{i=1}^{n}X_i\geq t \right]$ and binomial tails that depend
on the additional information on the moments.
In Section \ref{BernHoeffgame} we study the performance of the method on a certain class 
of bounded random variables that contain additional 
information on  conditional means and/or conditional distributions. We 
find random variables that are larger, in the sense of convex order, than any random variable 
from this class and prove similar 
results as above that take into account the additional information. Our approach is based on the notion 
of mixtures of random variables. Additionally, we
construct random variables $\xi_{p,\sigma}$ that are \emph{different} from  
Bernstein random variables and  are larger, in the sense of convex order, than 
any random variable from the class $\mathcal{B}(p,\sigma^2)$, consisting 
of all random variables in $\mathcal{B}(p)$ whose variance is $\sigma^2$.
In particular, in Section \ref{BernHoeffgame} we prove the following. \\

\begin{thm}\label{xitheorem} Fix positive integer $n$ and assume that, for $i=1,\ldots,n$, we are given a pair $(p_i,\sigma_i^2)$ for which
the class $\mathcal{B}(p_i,\sigma_i^2)$ is non-empty. Let $X_1,\ldots,X_n$ be independent random variables 
such that $X_i\in  \mathcal{B}(p_i,\sigma_i^2)$, for $i=1,\ldots,n$. Set $p=\frac{1}{n}\sum_i\mathbb{E}[X_i]$ and fix 
$t$ such that $np<t<n$. Then 
\[ \mathbb{P}\left[\sum_{i=1}^{n}X_i \geq t \right] \leq \inf_{f\in \mathcal{F}_{ic}(t)} \;
\frac{1}{f(t)} \mathbb{E}\left[f\left(\sum_{i=1}^{n}\xi_{p_i,\sigma_i} \right)\right] , \]
where, for $i=1,\ldots,n$ the random variable $\xi_{p_i,\sigma_i}$ is given by Lemma \ref{xirandom}.
Furthermore, the infimum on the right hand side 
is attained by a function of the form $\frac{1}{t-\varepsilon}\max\{0,x-\varepsilon \}$, for some $\varepsilon \in [0,t)$.
\end{thm}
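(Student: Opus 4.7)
The plan is to reduce the tail probability to a computation against the dominating random variables $\xi_{p_i,\sigma_i}$ via a combination of Markov's inequality and the closure properties of the convex order, mimicking the proof strategy of Theorem \ref{maintwo} but with variance information replacing the boundedness input.

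First I would fix an arbitrary $f\in\mathcal{F}_{ic}(t)$. Since $f$ takes values in $[0,\infty)$ and is increasing on $[t,\infty)$, the event $\{\sum_i X_i\geq t\}$ is contained in $\{f(\sum_i X_i)\geq f(t)\}$, so Markov's inequality yields
\[ \mathbb{P}\left[\sum_{i=1}^{n}X_i\geq t\right]\leq \frac{1}{f(t)}\mathbb{E}\left[f\left(\sum_{i=1}^{n}X_i\right)\right]. \]
Next I would invoke Lemma \ref{xirandom}, which (by construction of $\xi_{p_i,\sigma_i}$) gives the convex-order relation $X_i\leq_{cx}\xi_{p_i,\sigma_i}$, meaning $\mathbb{E}[g(X_i)]\leq\mathbb{E}[g(\xi_{p_i,\sigma_i})]$ for every convex $g$ on $[0,\infty)$. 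Assuming the $\xi_{p_i,\sigma_i}$ to be independent (which is harmless, as the right-hand side depends only on their marginals), the standard fact that the convex order is preserved under independent sums gives
\[ \sum_{i=1}^{n}X_i \; \leq_{cx} \; \sum_{i=1}^{n}\xi_{p_i,\sigma_i}. \]
Applying this to the convex function $f$ yields $\mathbb{E}[f(\sum_i X_i)]\leq \mathbb{E}[f(\sum_i \xi_{p_i,\sigma_i})]$, and taking the infimum over $f\in\mathcal{F}_{ic}(t)$ produces the stated bound.

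It remains to identify the extremal minimizers. For this I would repeat the argument from Section \ref{optopt}: any $f\in\mathcal{F}_{ic}(t)$ is, up to an additive constant that only decreases the ratio $\mathbb{E}[f(S)]/f(t)$ (where $S=\sum_i\xi_{p_i,\sigma_i}$), a non-negative combination of hockey-stick functions $x\mapsto\max\{0,x-\varepsilon\}$ with $\varepsilon\geq 0$. For any such mixture the ratio $\mathbb{E}[f(S)]/f(t)$ is a weighted average of the ratios $\mathbb{E}[\max\{0,S-\varepsilon\}]/(t-\varepsilon)$ with $\varepsilon\in[0,t)$, so the infimum is attained at a single hockey-stick function of the claimed form; the constraint $\varepsilon<t$ is forced by the requirement $f(t)>0$. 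Values $\varepsilon\geq t$ either violate the increasing condition on $[t,\infty)$ only trivially or give $f(t)=0$, hence are excluded.

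The main obstacle I anticipate is \emph{not} in the probabilistic argument above, which is a clean two-line chain of Markov, convex-order domination and independent summation; rather, the substance of the theorem is hidden in Lemma \ref{xirandom}, i.e.~in producing a genuinely useful dominating variable $\xi_{p,\sigma}$ supported on more than two points (in contrast with the pure Bernstein/Bernoulli construction of Theorem \ref{moments}), since incorporating the variance information forces one to go beyond the two-point extremal distributions used when only the mean is known. Granted that lemma, the proof reduces to assembling the pieces as above.
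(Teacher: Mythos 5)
Your proof of the displayed inequality is correct and is essentially the paper's: Markov's inequality applied to the non-negative $f$ (using monotonicity on $[t,\infty)$), the domination $X_i\leq_{cx}\xi_{p_i,\sigma_i}$ supplied by Lemma \ref{xirandom}, and closure of the convex order under independent sums (Lemma \ref{convexone}), exactly as in Theorem \ref{maintwo}. The gap is in your argument for the second assertion. You claim that every $f\in\mathcal{F}_{ic}(t)$ is, up to an additive constant, a non-negative combination of the hockey-stick functions $x\mapsto\max\{0,x-\varepsilon\}$. That is false as stated: members of $\mathcal{F}_{ic}(t)$ are only required to be increasing on $[t,\infty)$ and may decrease on part of $[0,t)$; for instance $f(x)=|x-\varepsilon|$ with $\varepsilon\leq t$ (one of the paper's own examples of this class) contains the decreasing piece $\max\{0,\varepsilon-x\}$, which has no such representation. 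So the weighted-average step is not available for a general admissible $f$, and the reduction to hockey sticks is not established.

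The gap is repairable: replace $f$ by $g(x):=\inf_{y\geq x}f(y)$, which is convex, non-decreasing, non-negative, satisfies $g\leq f$ and $g(t)=f(t)$ (because $f$ is increasing on $[t,\infty)$), hence $\mathbb{E}[g(S)]/g(t)\leq\mathbb{E}[f(S)]/f(t)$ for $S=\sum_i\xi_{p_i,\sigma_i}$; your decomposition and mixture argument then apply to $g$, with all knots $\varepsilon\geq 0$ automatically. (You should also be slightly more careful about discarding the additive constant: removing it lowers the ratio only when the remaining mixture already has ratio at most $1$; this is harmless because the hockey stick with $\varepsilon=0$ gives $np/t<1$, but it deserves a line.) The paper argues differently, as in Theorem \ref{Yuyipr}: normalizing $f(t)=1$, it replaces a putative optimizer by the truncated chord through $(t,f(t))$ and $(m_t,f(m_t))$ for a suitable point $m_t>t$ of the support of $S$, which by convexity minorizes $f$ pointwise and agrees with it at $t$, with $\varepsilon\geq 0$ forced by comparison with Markov's bound; this avoids any integral representation. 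Finally, your argument only identifies the infimum over $\mathcal{F}_{ic}(t)$ with the infimum over hockey sticks with $\varepsilon\in[0,t)$; the attainment asserted in the theorem still needs a word, e.g.\ that $S$ has finite support, so that the bound, as a function of the slope parameter, is piecewise linear and minimized at a knot (the analogue of Proposition \ref{Yuyiprtwo}), or a compactness argument on $[0,t)$ noting the ratio degenerates as $\varepsilon\to t^{-}$.
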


Finally, in Section \ref{unbounded} we show that the Bernstein-Hoeffding method 
reduces to Markov's inequality when employed to non-negative and \emph{unbounded} random variables.

\section{Sub-multiplicative order}
\label{mainHof}

This subsection is devoted to the proof of Theorem \ref{mainHoeff}. The proof will make use of the following 
elementary lemma, that is interesting on its own. \\

\begin{lemma}
\label{coupling} Fix real numbers $a,b$ such that $a\leq b$.
Let $X$ be a random variable that takes values on the interval $[a,b]$ and is such that $\mathbb{E}[X]=p$.
Let $B$ be the random variable 
that takes on the values $a$ and $b$ with probabilities 
$\frac{b-\mathbb{E}[X]}{b-a}$ and $\frac{\mathbb{E}[X]-a}{b-a}$, respectively. 
Then for any convex function, $f:[a,b]\rightarrow \mathbb{R}$, we have
\[ \mathbb{E}[f(X)] \leq \mathbb{E}[f(B)] . \] 
\end{lemma}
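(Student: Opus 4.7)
The plan is to use the defining inequality of convexity applied pointwise to $X$, after expressing every point of $[a,b]$ as an explicit convex combination of the two endpoints $a$ and $b$. This is the standard way to establish that the two-point distribution $B$ is a maximal element (in the convex order) among all distributions on $[a,b]$ with a prescribed mean, and it gives the lemma directly without any measure-theoretic coupling argument.

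Concretely, first I would observe that every $x \in [a,b]$ can be written uniquely as $x = \lambda(x) \cdot a + (1-\lambda(x)) \cdot b$ with $\lambda(x) = \frac{b-x}{b-a} \in [0,1]$. Applying the definition of convexity to $f$ then yields the pointwise bound
\[ f(x) \;\leq\; \frac{b-x}{b-a}\,f(a) \;+\; \frac{x-a}{b-a}\,f(b) \qquad \text{for all } x\in[a,b]. \]
Substituting $X$ for $x$ (which is legitimate because $X$ takes values in $[a,b]$ almost surely) gives the random-variable inequality
\[ f(X) \;\leq\; \frac{b-X}{b-a}\,f(a) \;+\; \frac{X-a}{b-a}\,f(b) . \]

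Next, I would take expectations on both sides. Using linearity of expectation and $\mathbb{E}[X]=p$, the right-hand side collapses to
\[ \frac{b-p}{b-a}\,f(a) \;+\; \frac{p-a}{b-a}\,f(b), \]
which is exactly $\mathbb{E}[f(B)]$ by the definition of $B$. This yields $\mathbb{E}[f(X)]\leq \mathbb{E}[f(B)]$, as required.

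There is essentially no obstacle here: the only subtlety worth flagging is that $f$ is only assumed convex (not necessarily continuous at the endpoints), but since $[a,b]$ is a closed bounded interval and convexity on the open interval forces continuity on the interior, while the values $f(a)$ and $f(b)$ enter only as fixed constants in the convex-combination bound, the pointwise inequality above holds on all of $[a,b]$. One should also note the degenerate case $a=b$, where $X=B=a$ deterministically and the inequality is trivial; this is worth a one-line remark, but it requires no real work.
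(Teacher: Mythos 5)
Your proof is correct. It differs from the paper's presentation only in packaging: the paper constructs, given $X$, a coupled two-point random variable $B_X$ supported on $\{a,b\}$ with $\mathbb{E}[B_X\mid X]=X$, and then invokes (conditional) Jensen's inequality, whereas you apply the chord inequality $f(x)\leq \frac{b-x}{b-a}f(a)+\frac{x-a}{b-a}f(b)$ pointwise and take expectations using linearity and $\mathbb{E}[X]=p$. These are the same computation --- the conditional expectation $\mathbb{E}[f(B_X)\mid X=x]$ is exactly the right-hand side of your chord bound --- so nothing is gained or lost in terms of generality; your version is slightly more elementary and self-contained, since it avoids conditional expectations altogether, while the paper's coupling formulation has the pedagogical advantage of making explicit the ``replace $X$ by a mean-preserving two-point spread'' mechanism that is reused later in the paper (e.g.\ in the mixture construction of Lemma \ref{mix}). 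Your remarks on the degenerate case $a=b$ (where $B$ as stated is not even well defined, but the claim is trivial) and on the behaviour of $f$ at the endpoints are sensible and harmless; neither is an obstacle for the argument.
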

\begin{proof} Given $X$, we
couple the random variables by setting $B_X$ to be either equal to  $a$ with probability $\frac{b-X}{b-a}$, or
equal to $b$ with probability $\frac{X-a}{b-a}$.
It is easy to see that $\mathbb{E}[B_X|X]=X$ and so
\[\mathbb{E}[B_X]= \mathbb{E}[\mathbb{E}[B_X|X]] = \mathbb{E}[X]=p .\]
Jensen's inequality now implies that
\[ \mathbb{E}[f(X)] = \mathbb{E}[f(\mathbb{E}[B_X|X])] \leq \mathbb{E}[f(B_X|X)]=\mathbb{E}[f(B_X)] ,\]
as required.
\end{proof}

We are now ready to prove our first main result. 

\begin{proof}[Proof of Theorem \ref{mainHoeff}]  Set $S_n := \sum_{i=1}^{n}X_i$ and fix a function $f\in \mathcal{F}_{sic}$.
By Markov's inequality, independence and the assumption that $f$ is increasing and submultiplicative, we conclude that
\begin{eqnarray*} \mathbb{P}\left[S_n \geq t \right] &\leq&\mathbb{P}[f(S_n) \geq f(t)]\quad  (f \;\text{is increasing})\\
&\leq& \frac{1}{f(t)} \mathbb{E}\left[f\left(\sum_{i=1}^{n}X_i\right) \right]\; (\text{Markov's inequality}) \\
&\leq& \frac{1}{f(t)} \mathbb{E}\left[\prod_{i=1}^{n} f(X_i) \right] \; (\text{sub-multiplicativity}) \\
&\leq&  \frac{1}{f(t)} \prod_{i=1}^{n} \mathbb{E}[f(X_i)] \; (\text{independence}) .
\end{eqnarray*}
Since the function $f(x)$ is convex, Lemma \ref{coupling} implies that 
\[ \mathbb{E}[f(X_i)] \leq \mathbb{E}[f(B_i)], \; \text{where} \; B_i \sim \text{Ber}(p_i) . \] 
Hence
\[  \mathbb{P}[S_n\geq t]\leq  \frac{1}{f(t)}\prod_{i=1}^{n} \mathbb{E}[f(B_i)] . \] 
Now the arithmetic-geometric means inequality yields
\[ \prod_{i=1}^{n} \mathbb{E}[f(B_i)] \leq \left\{ \frac{1}{n} \sum_{i=1}^{n} \mathbb{E}[f(B_i)] \right\}^n =
 \left\{ (1-p)f(0)+pf(1)\right\}^n \] 
and thus 
\[  \mathbb{P}[S_n\geq t]\leq \frac{1}{f(t)} \left\{(1-p)f(0)+pf(1)\right\}^n, \; f\in \mathcal{F}_{sic} . \]
The result follows.
\end{proof}

The first statement in Hoeffding's result  (Theorem \ref{folklore})  is obtained by 
adjusting the previous proof to the function 
$f(t)=e^{ht}, h>0$, which clearly belongs to $\mathcal{F}_{sic}$.
For $f\in \mathcal{F}_{sic}$, let 
\[ V_n(f,t):= \frac{1}{f(t)} \left\{ (1-p)f(0)+ p f(1) \right\}^n . \]
Notice that Theorem \ref{mainHoeff} suggests that there may be some space for improvements upon Hoeffding's bound, i.e., 
there may exist a function $\phi \in \mathcal{F}_{sic}$ such that 
$V_n(\phi,t)< V_n(e^{hx},t)$ for all $h$.
We now show that this is not the case, when $t$ is an \emph{integer}.
The following result solves the problem of finding $ \inf_{f\in \mathcal{F}_{sic}} V_n(f,t)$
in case $t$ is a positive \emph{integer}. \\

\begin{prop}
\label{integer} Let $t$ be a positive integer.
Suppose that $f\in \mathcal{F}$ is such that $V_n(f,t)\leq V_n(f',t)$ for all 
$f' \in \mathcal{F}$. Then there exists $g\in \mathcal{F}$ such that 
$V_n(g,t)= V_n(f,t)$ and 
 $g(t)= e^{ht}$, for some positive constant $h$.
\end{prop}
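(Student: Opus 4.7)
The plan is to exploit the fact that $V_n(f,t) = \frac{\{(1-p)f(0)+pf(1)\}^n}{f(t)}$ depends on $f$ only through the three values $f(0)$, $f(1)$, $f(t)$. The minimisation over $\mathcal{F}_{sic}$ therefore reduces to asking: which triples $(f(0), f(1), f(t))$ are realisable by a function in $\mathcal{F}_{sic}$, and what are the extremal ones? The point of the argument is that when $t$ is a positive integer, sub-multiplicativity is tight enough to force the optimum to be attained by an exponential.

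I would first extract the structural constraints. If $f \in \mathcal{F}_{sic}$ is not identically zero (in which case $V_n$ is not even defined), then plugging $x=y=0$ into sub-multiplicativity gives $f(0) \leq f(0)^2$, hence $f(0) \geq 1$. Because $f$ is increasing, $f(1) \geq f(0) \geq 1$, so we may write $f(1) = e^h$ with $h \geq 0$. Crucially, since $t$ is a positive integer, iterating $f(a+b) \leq f(a)f(b)$ yields
\[ f(t) \;\leq\; f(1)^t \;=\; e^{ht}. \]
Now set $g(x) := e^{hx}$. Then $g \in \mathcal{F}_{sic}$ (the exponential is sub-multiplicative, increasing and convex), and by construction $g(0) = 1 \leq f(0)$, $g(1) = f(1)$, and $g(t) = e^{ht} \geq f(t)$. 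Substituting these inequalities termwise,
\[ V_n(g,t) \;=\; \frac{\{(1-p) + p f(1)\}^n}{e^{ht}} \;\leq\; \frac{\{(1-p)f(0) + p f(1)\}^n}{f(t)} \;=\; V_n(f,t). \]

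Since $f$ is assumed to minimise $V_n$ over $\mathcal{F}_{sic}$ and $g \in \mathcal{F}_{sic}$, the inequality must be an equality, giving the desired $g \in \mathcal{F}_{sic}$ with $V_n(g,t) = V_n(f,t)$ and $g(t) = e^{ht}$. It remains to argue $h > 0$. If $h = 0$, then $f(1) = 1$, and combined with $f(0) \geq 1$ and monotonicity this forces $f \equiv 1$ on $[0,t]$, so $V_n(f,t) = 1$. However, Theorem \ref{folklore} shows that in the regime $np < t < n$ there is already an exponential $e^{h_0 x} \in \mathcal{F}_{sic}$ with $V_n(e^{h_0 x}, t) = H(n,p,t) < 1$, contradicting the minimality of $f$. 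Hence $h > 0$.

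I do not anticipate a substantial obstacle: the only genuinely non-trivial ingredient is the sub-multiplicative bound $f(0) \geq 1$, and the remainder is a direct termwise comparison. The hypothesis that $t$ be an integer is used in precisely one place, namely in deducing $f(t) \leq f(1)^t$ by iterating sub-multiplicativity; for non-integer $t$ this step would fail and the exponentials would not necessarily be extremal in $\mathcal{F}_{sic}$.
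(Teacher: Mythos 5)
Your proof is correct and follows essentially the same route as the paper: establish $f(0)\geq 1$ from sub-multiplicativity, compare $f$ termwise with $g(x)=f(1)^x=e^{hx}$, and use that integrality of $t$ gives $f(t)\leq f(1)^t=g(t)$, so $V_n(g,t)\leq V_n(f,t)$. Your additional argument that $h>0$ (ruling out $f(1)=1$ via $H(n,p,t)<1$) is a point the paper's proof glosses over, but it does not change the approach.
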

\begin{proof} Since $f(\cdot)$ is sub-multiplicative and non-negative, it easy to see that $f(0)\geq 1$.
For $x\geq 0$, set $g(x)= f(1)^x$. Then $g(0)=1$, $g(1)= f(1)$ and so
\[    \left\{ (1-p)g(0)+ p g(1) \right\}^n \leq \left\{ (1-p)f(0)+ p f(1) \right\}^n  .  \]
Since $t$ is a positive integer, it follows that $f(t)\leq  f(1)^t= g(t)$.
Hence $V_n(g,t)\leq V_{n}(f,t)$. The result follows upon setting $h=\ln f(1)$.
\end{proof}

Hence, for integer $t$, we have $\inf_f V_n(f,t) = H(n,p,t)$, where the infimum is taken over 
all functions $f\in \mathcal{F}_{sic}$ and $H(n,p,t)$ is the Hoeffding function that is defined in the introduction. 
Quoting Hoeffding (see \cite{Hoeffdingone}, page $15$), the bound 
\[\mathbb{P}\left[\sum_{i=1}^{n}X_i\geq t\right] \leq H(n,p,t)\]
is the best that
can be obtained from the inequality 
\[ \mathbb{P}\left[\sum_{i=1}^{n}X_i\geq t\right] \leq e^{-ht} \prod_{i=1}^{n} \mathbb{E}[e^{hB_i}],\; h>0, \]
where $B_i\sim \text{Ber}(\mathbb{E}[X_i])$.
This follows from the fact that $H(n,p,t)$ is obtained by minimising the expression on the right hand side of 
the above inequality with respect to $h>0$.
Proposition \ref{integer} shows that, in case $t$ is a positive integer, Hoeffding's bound is the best the can be 
obtained in a slightly broader sense, i.e., $H(n,p,t)$ is
the best bound on $\mathbb{P}\left[\sum_{i=1}^{n}X_i\geq t\right]$ that can be obtained by minimising
$\frac{1}{f(t)}\prod_{i=1}^{n} \mathbb{E}[f(B_i)]$ with respect to $f \in \mathcal{F}_{sic}$.

\section{Convex increasing order}

\subsection{Proof of Theorem \ref{maintwo}}
\label{improve}

In this section we prove Theorem \ref{maintwo} and 
 show that the Hoeffding bound can be improved using a larger class of functions, namely the 
class $\mathcal{F}_{ic}(t)$, defined in the introduction.
Once again, Theorem \ref{maintwo} implies that there may be some space for improvement upon Hoeffding's bound. 
We will employ this result and en route find a function $\phi \in \mathcal{F}_{ic}(t)$ such that 
\[\frac{1}{\phi(t)}\mathbb{E}[\phi(B)] < \inf_{h>0} e^{-ht} \mathbb{E}[e^{hB}] ,\]
where $B\sim \text{Bin}(n,p)$. Hence there is indeed space for improvement upon Hoeffding's bound.
The proof of Theorem \ref{maintwo} will require some well-known results and the following notion 
of ordering between random variables (see \cite{Shaked}). \\

\begin{defn} Let $X$ and $Y$ be two random variables such that 
\[ \mathbb{E}[f(X)]\leq \mathbb{E}[f(Y)], \; \text{for all convex functions} \; f:\mathbb{R}\rightarrow \mathbb{R}, \]
provided the expectations exist. Then $X$ is said to be smaller than $Y$ in the \emph{convex order}, denoted
$X\leq_{cx} Y$.
\end{defn}

The following two lemmas are well-known (see  Theorems $3.A.12$ and $3.A.37$ in \cite{Shaked} and  
Theorem $4$ in \cite{Hoeffdingone}). The first one shows that  convex order is closed under convolutions.\\

\begin{lemma}
\label{convexone} Let $X_1,\ldots, X_n$ be a set of independent random variables and let 
$Y_1,\ldots, Y_n$ be another set of independent random variables. If $X_i\leq_{cx} Y_i$, for $i=1,\ldots,n$, then 
\[ \sum_{i=1}^{n} X_i \leq_{cx} \sum_{i=1}^{n} Y_i . \]
\end{lemma}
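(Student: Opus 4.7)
The plan is to prove the inequality by a one-at-a-time replacement (telescoping) argument: swap an $X_i$ for its $Y_i$ counterpart one summand at a time, and show that no individual swap decreases $\mathbb{E}[f(\cdot)]$ for a convex $f$. Since the conclusion only concerns the distributions of the sums $\sum_iX_i$ and $\sum_iY_i$, I would first observe that we may assume without loss of generality that all $2n$ variables live on a common probability space with $X_1,\ldots,X_n,Y_1,\ldots,Y_n$ mutually independent. This is legitimate because convex order is a purely distributional notion, so one may replace the given families by independent copies having the same marginals without changing either side of the inequality we want to prove.

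Next, I would fix an arbitrary convex $f:\mathbb{R}\rightarrow\mathbb{R}$ for which the relevant expectations exist, and introduce the hybrid sums
\[ Z_k := Y_1+\cdots+Y_k + X_{k+1}+\cdots+X_n, \qquad k=0,1,\ldots,n, \]
so that $Z_0=\sum_iX_i$ and $Z_n=\sum_iY_i$. It then suffices to verify that $\mathbb{E}[f(Z_{k-1})]\leq \mathbb{E}[f(Z_k)]$ for each $k$. Setting $W_k := Y_1+\cdots+Y_{k-1}+X_{k+1}+\cdots+X_n$, joint independence makes $W_k$ independent of both $X_k$ and $Y_k$. Conditioning on $W_k=w$ therefore yields
\[ \mathbb{E}\bigl[f(w+X_k)\bigr] \leq \mathbb{E}\bigl[f(w+Y_k)\bigr], \]
because the translate $x\mapsto f(w+x)$ is convex and the hypothesis $X_k\leq_{cx}Y_k$ applies. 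Integrating this pointwise inequality against the distribution of $W_k$ gives $\mathbb{E}[f(Z_{k-1})]\leq\mathbb{E}[f(Z_k)]$, and chaining the $n$ resulting inequalities delivers $\mathbb{E}[f(\sum_iX_i)]\leq \mathbb{E}[f(\sum_iY_i)]$, i.e. $\sum_iX_i\leq_{cx}\sum_iY_i$.

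I do not anticipate a serious obstacle. The only point that requires care is the independence bookkeeping: we need $W_k$ independent of $X_k$ (respectively $Y_k$) in order to carry out the conditional step cleanly, and this is exactly why I reduce to a joint-independence setting at the outset. The remainder is a routine combination of Fubini's theorem and the elementary fact that affine translations preserve convexity.
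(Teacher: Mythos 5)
Your argument is correct, and it is the standard proof of closure of the convex order under convolution: the paper itself does not prove this lemma but simply cites it as well known (Theorem 3.A.12 in Shaked and Shanthikumar), where essentially the same one-summand-at-a-time replacement argument appears. Your reduction to a joint-independence setting, the hybrid sums $Z_k$, and the conditioning step using convexity of $x\mapsto f(w+x)$ together with $X_k\leq_{cx}Y_k$ are all sound; the only point worth flagging is the integrability hypothesis (the expectations must exist at each stage), which is harmless here since the paper applies the lemma to bounded random variables.
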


The second lemma shows that a sum of independent Bernoulli random variables is dominated, in the sense of
convex order, by a certain binomial random variable.\\

\begin{lemma}
\label{convextwo} Fix $n$ real numbers $p_1,\ldots,p_n$ from $(0,1)$.
Let $B_1,\ldots,B_n$ be independent Bernoulli random variables with $B_i \sim \text{Ber}(p_i)$.
Then 
\[ \sum_{i=1}^{n} B_i \leq_{cx} B, \]
where $B\sim\text{Bin}(n,p)$ is a binomial random variable of parameters $n$ and $p:=\frac{1}{n}\sum_i p_i$.
\end{lemma}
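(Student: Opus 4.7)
The plan is to reduce the claim to a two-coordinate statement about pairs of Bernoullis, then iterate using Lemma \ref{convexone}. First I would prove the following sub-lemma: if $B_i \sim \text{Ber}(p_i)$ and $B_j \sim \text{Ber}(p_j)$ are independent with $p_i \leq p_j$, and if $(p_i',p_j') = (p_i + \varepsilon, p_j - \varepsilon)$ for some $\varepsilon \in (0, (p_j - p_i)/2]$, then $B_i + B_j \leq_{cx} B_i' + B_j'$, where $B_i', B_j'$ are independent Bernoullis with the primed parameters. Both sums take values in $\{0,1,2\}$ and share the common mean $p_i + p_j$. By the standard characterization of convex order via the ``excess'' functions $(x-a)_+$, it suffices to verify $\mathbb{E}[(X-a)_+]\leq \mathbb{E}[(Y-a)_+]$ for every $a\in\mathbb{R}$, and for supports in $\{0,1,2\}$ with equal means this collapses to the single inequality at $a=1$, namely $p_i p_j \leq p_i' p_j'$. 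The latter follows at once from $p_i' p_j' - p_i p_j = \varepsilon(p_j - p_i - \varepsilon) \geq 0$.

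Next I would invoke the Hardy-Littlewood-Polya majorization theorem: since $(p_1,\ldots,p_n)$ majorizes the constant vector $(p,\ldots,p)$ (both summing to $np$), one can pass from the former to the latter by a finite sequence of T-transforms acting on pairs of coordinates. A convenient choice is to pair, at each step, the current smallest coordinate $p_i$ (necessarily $\leq p$) with the current largest $p_j$ (necessarily $\geq p$) and take $\varepsilon = \min(p - p_i,\, p_j - p)$; this choice satisfies $\varepsilon \leq (p_j - p_i)/2$ and so falls within the regime of the sub-lemma, while fixing at least one coordinate to the target value $p$. Applying the sub-lemma to the two affected Bernoullis and Lemma \ref{convexone} to carry the remaining unchanged Bernoullis along, every T-transform increases the total sum in convex order. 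After at most $n-1$ such steps the sum becomes one of $n$ i.i.d. $\text{Ber}(p)$ variables, whose distribution is precisely $B \sim \text{Bin}(n,p)$, yielding $\sum_{i=1}^n B_i \leq_{cx} B$ by transitivity of $\leq_{cx}$.

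The main obstacle is, in my view, the two-variable sub-lemma. Although the underlying algebraic content is a one-line inequality, one must first invoke (or give a short proof of) the stop-loss characterization of convex order and then verify that for distributions on $\{0,1,2\}$ with a common mean the single comparison at $a = 1$ truly is sufficient. The majorization reduction is standard, but it does deserve the brief remark made above confirming that the spread-reduction parameter in each T-transform lies in the admissible range $(0,(p_j-p_i)/2]$ demanded by the sub-lemma; otherwise one would have to handle an auxiliary case in which the pair of coordinates swaps its relative ordering.
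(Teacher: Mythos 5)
Your argument is correct. Note, however, that the paper does not prove Lemma \ref{convextwo} at all: it is quoted as a known result, with pointers to Theorem $3.A.37$ of Shaked--Shanthikumar and to Theorem $4$ of Hoeffding's $1956$ paper, so any complete proof you give is necessarily ``different'' from the paper's treatment. What you propose is essentially the classical route (it is close in spirit to Hoeffding's original argument, which also reduces to varying two of the parameters at a time): a Robin Hood/T-transform step on a pair of parameters, the observation that such a step increases the two-term sum in convex order, tensoring with the untouched coordinates via Lemma \ref{convexone}, and iteration until all parameters equal $p$. Your explicit scheduling (pair the current smallest with the current largest and take $\varepsilon=\min(p-p_i,\,p_j-p)$) indeed keeps $\varepsilon\leq (p_j-p_i)/2$, fixes at least one coordinate to $p$ per step, and terminates in at most $n-1$ steps, so you do not even need the full Hardy--Littlewood--P\'olya theorem, only this concrete sequence of transforms. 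The one point you flag but leave unverified — that for laws on $\{0,1,2\}$ with equal means the stop-loss comparison reduces to the single value $a=1$ — is genuine but easy: the difference $D(a)=\mathbb{E}[(Y-a)_+]-\mathbb{E}[(X-a)_+]$ is piecewise linear with breakpoints only at $0,1,2$, vanishes for $a\leq 0$ (equal means, nonnegativity) and for $a\geq 2$, hence is nonnegative everywhere if and only if $D(1)=p_i'p_j'-p_ip_j\geq 0$, which is your one-line computation. What your approach buys is a short, self-contained and elementary proof; what the paper's citation buys is access to the general statements in Shaked--Shanthikumar (and Hoeffding's stronger results) without reproving them.
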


The proof of Theorem \ref{maintwo} is basically an extension of the proof of Theorem \ref{mainHoeff}.

\begin{proof}[Proof of Theorem \ref{maintwo}] 
Fix $f\in \mathcal{F}_{ic}(t)$. Since $f(\cdot)$ is non-negative and increasing in  
$[t,\infty)$, Markov's inequality implies that
\[ \mathbb{P}\left[ \sum_{i=1}^{n}X_i \geq t\right] \leq 
\frac{1}{f(t)} \mathbb{E}\left[ f\left(\sum_{i=1}^{n}X_i\right) \right]. \]
Since $f(\cdot)$ is convex, Lemmata \ref{coupling} and \ref{convexone}  imply that 
\[ \mathbb{E}\left[ f\left(\sum_{i=1}^{n}X_i\right) \right] \leq \mathbb{E}\left[ f\left(\sum_{i=1}^{n}B_i\right) \right] , \]
where $B_i \sim \text{Ber}(\mathbb{E}[X_i]), i=1,\ldots,n$. Now Lemma \ref{convextwo} yields
\[ \mathbb{E}\left[ f\left(\sum_{i=1}^{n}B_i\right) \right] \leq \mathbb{E}\left[ f\left(B\right) \right]  \]
and the result follows. 
\end{proof}

Similar ideas as above have been employed to sums of independent 
Bernoulli random variables by Le\'on and Perron  in \cite{Leon}.
In a subsequent section we employ Theorem \ref{maintwo} and en route improve upon Hoeffding's inequality
by inserting certain "missing" factors. Before doing so, 
we state some results regarding the optimal function in the class $\mathcal{F}_{ic}(t)$.
  
\subsection{Optimal functions in $\mathcal{F}_{ic}(t)$}\label{optopt}   

Let the random variables $X_1,\ldots,X_n$ be independent and such that $0\leq X_i\leq 1$, for each $i=1,\ldots,n$.
Set $p = \frac{1}{n}\sum_{i=1}^{n}\mathbb{E}[X_i]$ and fix a real number, $t$, such that
$np < t < n$. We have shown in the previous section that, for $f\in \mathcal{F}_{ic}(t)$, we have
\[ \mathbb{P}\left[ \sum_{i=1}^{n}X_i \geq t\right] \leq \frac{1}{f(t)}\mathbb{E}\left[ f\left(B\right) \right] , \]
where $B\sim \text{Bin}(n,p)$. Set
\[ T_n(f,t) := \frac{1}{f(t)} \mathbb{E}[f(B)] = \frac{1}{f(t)} \sum_{i=0}^{n} f(i)\cdot \mathbb{P}[B=i] . \]
In this section we solve the problem of finding $\inf_f T_n(f,t)$, where the infimum is taken over all 
functions $f\in \mathcal{F}_{ic}(t)$. We show that the solution is related to Bentkus' result. 
We begin with an observation on the optimal function.\\

\begin{lemma}\label{opt} Let $\phi \in \mathcal{F}_{ic}(t)$ be a function such that 
$T_n(\phi, t) = \inf_f T_n(f,t)$, where the infimum is taken over all 
functions $f\in \mathcal{F}_{ic}(t)$. Then we may assume that 
$\phi(t)=1$.
\end{lemma}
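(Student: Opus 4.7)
The plan is to exploit the scale-invariance of the functional $T_n(\,\cdot\,,t)$ under multiplication by a positive constant. First I would verify that if $f\in \mathcal{F}_{ic}(t)$ and $c>0$, then $cf\in \mathcal{F}_{ic}(t)$ as well, since nonnegativity, convexity, and the property of being increasing on $[t,\infty)$ are all preserved under multiplication by a positive scalar. Moreover, a direct computation shows
\[ T_n(cf,t) \;=\; \frac{\mathbb{E}[cf(B)]}{cf(t)} \;=\; \frac{\mathbb{E}[f(B)]}{f(t)} \;=\; T_n(f,t). \]

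Next I would argue that any optimal $\phi\in\mathcal{F}_{ic}(t)$ must satisfy $\phi(t)>0$. Because $f(x)=e^{hx}$ lies in $\mathcal{F}_{ic}(t)$ and yields a finite value, the infimum is finite; hence the expression $\mathbb{E}[\phi(B)]/\phi(t)$ must be finite at the optimum, which forces $\phi(t)>0$ (otherwise the ratio is either $+\infty$ or of indeterminate form $0/0$, in which case $\phi$ cannot be counted as a genuine minimizer and can simply be discarded).

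With $\phi(t)>0$ established, set $\tilde\phi := \phi/\phi(t)$. Then $\tilde\phi\in \mathcal{F}_{ic}(t)$ by closure under positive scaling, $\tilde\phi(t)=1$, and by the scale-invariance verified above,
\[ T_n(\tilde\phi,t) \;=\; T_n(\phi,t) \;=\; \inf_{f\in\mathcal{F}_{ic}(t)} T_n(f,t), \]
so $\tilde\phi$ is itself an optimizer meeting the desired normalization. The only mild obstacle is the degenerate case $\phi(t)=0$, handled by the preceding paragraph; beyond this, the lemma is simply a restatement of the fact that the homogeneous degree zero functional $T_n(\,\cdot\,,t)$ admits a canonical representative in every positive ray of $\mathcal{F}_{ic}(t)$.
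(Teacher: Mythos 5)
Your proposal is correct and takes essentially the same approach as the paper, which simply replaces $\phi$ by $\phi_1(x)=\phi(x)/\phi(t)$ and relies on the degree-zero homogeneity of $T_n(\cdot,t)$. Your additional care in ruling out $\phi(t)=0$ is a reasonable refinement of a point the paper leaves implicit, but it does not change the argument.
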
  
\begin{proof} If $\phi(t)\neq 1$, then we set $\phi_1(x)= \frac{1}{\phi(t)}\phi(x), x\geq 0$.
\end{proof}
  
Using this result we can find functions $f\in \mathcal{F}_{ic}(t)$ that minimise $T_n(f,t)$.\\

\begin{thm}
\label{Yuyipr} Let $\phi\in \mathcal{F}_{ic}(t)$ be such that  $T_n(\phi, t) = \inf_f T_n(f,t)$, 
where the infimum is taken over all 
functions $f\in \mathcal{F}_{ic}(t)$. Then  $\phi(x)$ equals 
$\max\{0, \frac{1}{t-\varepsilon}\cdot(x-\varepsilon) \}$, for some 
$\varepsilon \in [0,t)$.
\end{thm}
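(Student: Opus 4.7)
The plan is to prove Theorem \ref{Yuyipr} by a supporting-line (sub-gradient) argument at the single point $x=t$. By Lemma \ref{opt} I may normalise $\phi(t)=1$, after which the assertion reduces to showing that for every such $\phi\in\mathcal{F}_{ic}(t)$ there exists $\varepsilon\in[0,t)$ with $\mathbb{E}[\phi(B)]\geq\mathbb{E}[h_\varepsilon(B)]$, where $h_\varepsilon(x):=\max\{0,(x-\varepsilon)/(t-\varepsilon)\}$. Optimality of $\phi$ will then force $\mathbb{E}[\phi(B)]=\mathbb{E}[h_\varepsilon(B)]$ for the corresponding hinge, and one may replace $\phi$ by $h_\varepsilon$ (since only the values of $\phi$ on the support $\{0,1,\ldots,n\}$ of $B$ affect the objective $T_n(\phi,t)$).

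The heart of the argument is to introduce the right-derivative $b:=\phi'_+(t)$. Since $\phi$ is convex on $[0,\infty)$ and non-decreasing on $[t,\infty)$, $b$ is a well-defined non-negative real number, and the sub-gradient inequality gives
\[ \phi(x)\geq 1+b(x-t)\qquad\text{for every } x\geq 0. \]
I would then split into two cases according to how $b$ compares with $1/t$. If $b\geq 1/t$, setting $\varepsilon:=t-1/b$ places $\varepsilon$ in $[0,t)$, and a one-line computation rewrites $1+b(x-t)=(x-\varepsilon)/(t-\varepsilon)$. Combining the sub-gradient inequality with $\phi\geq 0$ gives the \emph{pointwise} bound $\phi(x)\geq h_\varepsilon(x)$, from which $\mathbb{E}[\phi(B)]\geq\mathbb{E}[h_\varepsilon(B)]$ follows immediately.

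If instead $b<1/t$, the affine function $\ell(x)=1+b(x-t)$ is strictly positive on $[0,\infty)$ (since $\ell(0)=1-bt>0$), so $\phi\geq\ell$ already yields $\mathbb{E}[\phi(B)]\geq 1+b(np-t)$. An elementary manipulation using $np<t$ and $b\leq 1/t$ shows that $1+b(np-t)\geq np/t$, and $np/t=\mathbb{E}[h_0(B)]$ is the expectation of the degenerate hinge $h_0(x)=x/t$ corresponding to $\varepsilon=0$. The main subtlety is precisely this second case: here $\phi$ need not dominate $h_0$ pointwise (the two cross at $x=t$), so the inequality has to be obtained in expectation, with the hypothesis $\mathbb{E}[B]=np<t$ playing an essential role in converting a pointwise crossing into a one-sided integral bound.
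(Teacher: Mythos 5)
Your argument is correct, and it follows a genuinely different route from the paper's. The paper's proof takes the chord of $\phi$ through the points $(t,1)$ and $(m_t,\phi(m_t))$, where $m_t$ is the least integer exceeding $t$, defines the hinge from that chord, and then uses two separate facts: convexity gives $\phi_{\ast}(k)\leq\phi(k)$ at every \emph{integer} $k$ (exploiting that no integer lies strictly between $t$ and $m_t$), and a contradiction with Markov's bound is invoked to rule out $\varepsilon<0$. You instead take the supporting line $\ell(x)=1+b(x-t)$ at $x=t$ with $b=\phi'_+(t)$ and split on whether $b\geq 1/t$. In the first case you get the stronger conclusion that the hinge $h_{\varepsilon}$ with $\varepsilon=t-1/b\in[0,t)$ minorizes $\phi$ \emph{pointwise on all of} $[0,\infty)$, not merely at integers, so the comparison $\mathbb{E}[h_\varepsilon(B)]\leq\mathbb{E}[\phi(B)]$ needs nothing about the lattice structure of the support of $B$; in the second case ($b<1/t$) you handle the boundary constraint $\varepsilon\geq 0$ by comparing in expectation with the Markov hinge $h_0(x)=x/t$, using only $\mathbb{E}[B]=np<t$. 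This cleanly replaces the paper's somewhat delicate (and, as written, slightly garbled) contradiction argument for $\varepsilon\geq 0$, at the price of losing the explicit formula $\varepsilon=\frac{t\phi(m_t)-m_t}{\phi(m_t)-1}$ that the paper's chord construction provides and that connects naturally to the discretisation $h=\frac{1}{t-j}$ in Proposition \ref{Yuyiprtwo}. Both proofs, yours and the paper's, establish the theorem in the same slightly weakened sense: one exhibits a hinge whose objective value is no larger, so the optimizer may be taken of that form (you make this replacement explicit by noting that only the values on $\{0,1,\ldots,n\}$ matter).
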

\begin{proof} We may assume that $\phi(t)=1$ and so $\phi$  is such that
\[ \inf_f  \sum_{i=1}^{n} f(i)\cdot \mathbb{P}[B=i] = T_n(\phi,t), \]
where the infimum is taken over the set $ \mathcal{Z}_{ic}(t)$, containing all
functions $f\in \mathcal{F}_{ic}(t)$ such that $f(t)=1$. 
Let $m_t:= \min\{n\in \mathbb{N}: t<n\}$ be the smallest positive integer that is larger than $t$.
Note that, by definition, $0<m_t -t \leq 1$.
For $x\geq 0$, define the function 
\[ \phi_{\ast}(x) := \max\left\{0, \frac{1}{t-\varepsilon}(x-\varepsilon) \right\}, \; \text{where}\; \varepsilon=
\frac{t\phi(m_t)-m_t}{\phi(m_t)-1}. \]
In other words, $\phi_{\ast}(\cdot)$ equals zero for $x< \varepsilon$ and for $x\geq\varepsilon$ it is a straight 
line starting from point $(0,\varepsilon)\in \mathbb{R}^2$ and passing through the points $(t,\phi(t))$ and
$(m_t, \phi(m_t))$. Note that $\varepsilon <t$ and that $\varepsilon \geq 0$; indeed, if $\varepsilon< 0$, then 
$\phi(0)>0$ and
the function $\phi_1(x)=\frac{\phi(t)-\phi(0)}{t}x +\phi(0)$ would be such that 
$\frac{np}{t}+ \phi(0)=\mathbb{E}[\phi_1(B)]\leq \mathbb{E}[\phi(B)]$, which implies that  
$\mathbb{E}[\phi(B)]$ is even worse than the bound obtained by Markov's
inequality, hence contradicts its optimality. 
Since the function $\phi(\cdot)$ is convex
it follows that for every integer $k$ in the interval $[0,n]$ we have
$\phi_{\ast}(k)\leq \phi(k)$ and this, in turn, implies that 
\[ T_n(\phi_{\ast},t) \leq T_n(\phi,t) ,\]
as required.
\end{proof}  
  
This yields the following.\\

\begin{cor}
\label{Yuyiproof} Let the parameters $n,p,t$ be as in Theorem \ref{maintwo}. Then for any $t\in (np,n)$ we have
\[ \inf_f \frac{1}{f(t)} \mathbb{E}[f(B)] = \inf_{a<t} \;\frac{1}{t-a} \mathbb{E}\left[\max\{0, B-a\} \right] , \]
where $B\sim\text{Bin}(n,p)$ and the infimum on the left hand side is taken over all functions $f\in \mathcal{F}_{ic}(t)$.
\end{cor}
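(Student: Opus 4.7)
The plan is to deduce Corollary \ref{Yuyiproof} directly from Theorem \ref{Yuyipr}, by matching the minimizing family of tent functions to the family parametrized by $a<t$ that appears on the right-hand side. The argument is a two-sided inequality.

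For the easy direction $\inf_f T_n(f,t) \leq \inf_{a<t}\frac{1}{t-a}\mathbb{E}[\max\{0,B-a\}]$, I would exhibit an explicit subfamily of $\mathcal{F}_{ic}(t)$ realising the right-hand side. For each $a<t$, set $\phi_a(x) := \frac{1}{t-a}\max\{0,x-a\}$. This function is non-negative, convex (as the positive part of an affine function), and strictly increasing on $[t,\infty)$ since $t>a$; hence $\phi_a \in \mathcal{F}_{ic}(t)$. Because $\phi_a(t)=1$, one computes $T_n(\phi_a,t) = \frac{1}{t-a}\mathbb{E}[\max\{0,B-a\}]$, and taking the infimum over $a<t$ yields the inequality.

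For the reverse direction, I would apply the construction from the proof of Theorem \ref{Yuyipr} to any $f \in \mathcal{F}_{ic}(t)$ (not necessarily a minimizer). By Lemma \ref{opt} we may normalize so that $f(t)=1$. With $m_t := \min\{k \in \mathbb{N} : k>t\}$ and $\varepsilon := (tf(m_t)-m_t)/(f(m_t)-1)$, the tent function $f_\ast(x) := \max\{0, \frac{1}{t-\varepsilon}(x-\varepsilon)\} = \phi_\varepsilon(x)$ agrees with $f$ at both $t$ and $m_t$. Convexity of $f$ then forces $f_\ast(k) \leq f(k)$ for every integer $k \in \{0,1,\ldots,n\}$, so $T_n(f_\ast,t) \leq T_n(f,t)$; and the proof of Theorem \ref{Yuyipr} moreover shows $\varepsilon \in [0,t)$. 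Hence $T_n(f,t) \geq \frac{1}{t-\varepsilon}\mathbb{E}[\max\{0,B-\varepsilon\}] \geq \inf_{a<t}\frac{1}{t-a}\mathbb{E}[\max\{0,B-a\}]$, and taking the infimum over $f$ closes the argument.

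The main obstacle I expect is the degenerate case $f(m_t)=1$, where the formula defining $\varepsilon$ becomes indeterminate. In that case convexity combined with $f(t)=f(m_t)=1$ forces $f \equiv 1$ on $[t,m_t]$; since the right derivative of a convex function is non-decreasing, the derivative must also be non-positive on $[0,t)$, which gives $f \geq 1$ on all of $[0,\infty)$ and thus $T_n(f,t) \geq 1$. On the other hand, letting $a \to -\infty$ on the right-hand side gives $\frac{1}{t-a}\mathbb{E}[\max\{0,B-a\}] = \frac{np-a}{t-a} \to 1$, so $\inf_{a<t}\frac{1}{t-a}\mathbb{E}[\max\{0,B-a\}] \leq 1 \leq T_n(f,t)$, and the reverse inequality holds for such $f$ as well, completing the proof.
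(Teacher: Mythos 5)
Your argument is correct and follows essentially the same route as the paper: the paper obtains Corollary \ref{Yuyiproof} directly from Theorem \ref{Yuyipr}, and you simply re-run the tent-function construction from that theorem's proof, applied to an arbitrary $f\in\mathcal{F}_{ic}(t)$ rather than to a presumed minimiser, together with the easy observation that each $\phi_a(x)=\frac{1}{t-a}\max\{0,x-a\}$, $a<t$, lies in $\mathcal{F}_{ic}(t)$. If anything your version is slightly more careful than the paper's, since it does not assume the infimum over $\mathcal{F}_{ic}(t)$ is attained and it treats the degenerate case $f(m_t)=1$ explicitly. One small correction: the paper's argument that $\varepsilon\geq 0$ uses the \emph{optimality} of $\phi$ (comparison with the Markov bound), so for an arbitrary $f$ you cannot invoke it and $\varepsilon$ may well be negative; this is harmless for your chain of inequalities, because the right-hand infimum ranges over all $a<t$, negative values included, so $\frac{1}{t-\varepsilon}\mathbb{E}\left[\max\{0,B-\varepsilon\}\right]\geq \inf_{a<t}\frac{1}{t-a}\mathbb{E}\left[\max\{0,B-a\}\right]$ still holds — but the sentence ``the proof of Theorem \ref{Yuyipr} moreover shows $\varepsilon\in[0,t)$'' should be dropped or weakened to $\varepsilon<t$.
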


Notice that we can write the function $\rho_{\varepsilon}(x):=\max\{0, \frac{1}{t-\varepsilon}\cdot(x-\varepsilon) \}$, 
for $\varepsilon \in [0,t)$, 
in the form $g_h(x):=\max\{0,h\cdot (x-t)+1\}$, where $h=\frac{1}{t-\varepsilon}$, and that 
this correspondence is injective. Notice also that, since $\varepsilon \geq 0$, 
we have $h\geq 1/t$.
The following question arises naturally from Corollary \ref{Yuyiproof}. \\

\begin{que}
What is the optimal $\varepsilon$ such that  
\[ \inf_{a<t} \;\frac{1}{t-a} \mathbb{E}\left[\max\{0, B-a\} \right] =  \mathbb{E}\left[\rho_{\varepsilon}(B) \right] \; ? \]
\end{que}

We remark that such an $\varepsilon$ will satisfy $\varepsilon \leq \lceil t\rceil -1$, 
where $\lceil t\rceil := \min\{k\in \mathbb{N}: t\leq k\}$. To see this notice that if $\varepsilon > \lceil t\rceil -1$, then 
$\rho_{\varepsilon}(\lceil t\rceil -1)=0$ and we may decrease $\varepsilon$, 
until it reaches the point $\lceil t\rceil -1$, without
increasing the value  $\mathbb{E}\left[\rho_{\varepsilon}(B) \right]$. 
Since $\varepsilon \leq \lceil t\rceil -1$ it follows that 
$h \leq \frac{1}{t-\lceil t\rceil +1}$.
Now, finding the optimal $\varepsilon$ is equivalent to finding the optimal $h$.
We are \emph{not} able to find this $h$. Nevertheless,
due to the following result, one can easily find $h$ using, say, a binary search algotithm. \\

\begin{prop}\label{Yuyiprtwo} Let the parameters $n,p,t$ be as in Theorem \ref{maintwo}. Let $h>0$ be such that
\[  \mathbb{E}\left[\max\{0, h\cdot(B-t)+1\} \right] = \inf_{s>0} \; \mathbb{E}\left[\max\{0, s\cdot(B-t)+1\} \right] ,\]
where $B\sim \text{Bin}(n,p)$.
Then we may assume that $h =\frac{1}{t-j}$, for some positive integer $j\in \{0,1,\ldots, \lceil t\rceil -1\}$. 
\end{prop}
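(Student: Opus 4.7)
The plan is to reparametrize via $\varepsilon = t - 1/h$, so that integer values $\varepsilon = j$ correspond exactly to $h = 1/(t-j)$, recall from the discussion preceding the statement that it suffices to search over $\varepsilon \in [0, \lceil t \rceil - 1]$, and then to show that the objective
$$\Phi(\varepsilon) \;:=\; \mathbb{E}\bigl[\max\{0,\, h(B-t)+1\}\bigr] \;=\; \frac{1}{t-\varepsilon}\,\mathbb{E}\bigl[\max\{0,\, B-\varepsilon\}\bigr]$$
is piecewise monotone on this interval, with break-points exactly at the integers. The minimum is then forced to occur at some integer $j \in \{0,1,\ldots,\lceil t \rceil -1\}$.

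First I would partition $[0,\lceil t\rceil -1]$ into the subintervals $[j, j+1]$ for $j = 0, 1, \ldots, \lceil t \rceil - 2$. On each open subinterval $(j,j+1)$ the set of integers $k \in \{0,\ldots,n\}$ that satisfy $k > \varepsilon$ equals the constant set $\{j+1,\ldots,n\}$. Writing $Q_j := \mathbb{P}[B>j]$ and $A_j := \mathbb{E}[B\cdot \mathbf{1}_{\{B>j\}}]$, one therefore obtains
$$\Phi(\varepsilon) \;=\; \frac{A_j \,-\, \varepsilon\, Q_j}{t-\varepsilon}, \qquad \varepsilon \in (j,j+1),$$
which is a ratio of two affine functions of $\varepsilon$. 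A direct computation then yields
$$\Phi'(\varepsilon) \;=\; \frac{A_j \,-\, t\, Q_j}{(t-\varepsilon)^2},$$
whose sign is constant on $(j,j+1)$, so $\Phi$ is monotone on the closed subinterval $[j, j+1]$. Combined with a short check that $\Phi$ is continuous at each integer break-point (the identity $A_{j-1} - j Q_{j-1} = A_j - j Q_j$ follows immediately from $A_{j-1} = A_j + j\,\mathbb{P}[B=j]$ and $Q_{j-1} = Q_j + \mathbb{P}[B=j]$), this forces the global minimum of $\Phi$ on $[0,\lceil t\rceil -1]$ to be attained at an integer endpoint. Undoing the substitution gives the asserted form $h = 1/(t-j)$.

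The genuine difficulty is not technical depth but bookkeeping: one has to verify that the reparametrization $h \leftrightarrow \varepsilon$ is a bijection between the feasible range of $h$, namely $[1/t,\, 1/(t-\lceil t\rceil +1)]$, and $\varepsilon \in [0,\lceil t\rceil -1]$, handle the continuity at integer break-points cleanly, and make sure that monotonicity on each \emph{closed} piece (rather than only on open pieces) is enough to conclude that the extremum lies at a shared endpoint of two consecutive pieces. The structural heart of the argument, however, is the single observation that $\Phi$ is piecewise a ratio of affine functions of $\varepsilon$, so its derivative on each piece has constant sign.
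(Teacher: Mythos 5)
Your argument is correct and is essentially the paper's proof in a different parametrization: the paper works directly in $h$, where the objective $E(h)=\mathbb{E}\left[\max\{0,h(B-t)+1\}\right]$ is piecewise \emph{linear} with breakpoints $h=\frac{1}{t-j}$, so monotonicity on each closed piece is immediate, whereas your substitution $\varepsilon=t-1/h$ turns each piece into a ratio of affine functions and needs your (correct) computation $\Phi'(\varepsilon)=(A_j-tQ_j)/(t-\varepsilon)^2$. The skeleton is identical in both cases — same breakpoints, same reliance on the preceding discussion for the range restriction, continuity across the breakpoints, and the minimum of a continuous piecewise-monotone function attained at a breakpoint.
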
  
\begin{proof} 
Recall that
$h\in \left[\frac{1}{t},\frac{1}{t+1-\lceil t\rceil }\right]$. We have
\[ \mathbb{E}[g_h(B)] = \sum_{i=0}^{n}\binom{n}{i} p^i (1-p)^{n-i} \cdot g_h(i) . \]
The function $E(h):= \mathbb{E}[g_h(B)]$ is linear on the interval $\left[\frac{1}{t-j}, \frac{1}{t-j-1} \right]$, 
for every $j\in \{0,1,\ldots,\lceil t\rceil -1\}$. Hence the function $E(h)$ is continuous and piecewise linear on 
the interval $\left[\frac{1}{t}, \frac{1}{t-\lceil t\rceil +1} \right]$ and this 
implies that it attains its minimum at the endpoints of $\left[\frac{1}{t-j}, \frac{1}{t-j-1} \right]$, 
for some $j\in \{0,1,\ldots,\lceil t\rceil -1\}$. The result follows.
\end{proof}  
  
In the next section we obtain an improvement upon Hoeffding's bound.

\subsection{An improvement upon Hoeffding's bound}
\label{missingtwo}   
  
In this section we collect results that can be obtained by employing Theorem \ref{maintwo}.
We begin with the  proof of Theorem \ref{Talagrtwo}.

\begin{proof}[Proof of Theorem \ref{Talagrtwo}] 
Given $h>0$
define the function $f(x)= \max\{0, h(x-t)+1\}$, for $x\geq 0$. It is easy to see that $f\in \mathcal{F}_{ic}(t)$.
Let $m_t$ be the largest positive integer for which $f(m_t)=0$.
Using Theorem \ref{maintwo} and the inequality $e^x > 1+x$, for $x\in \mathbb{R}$, we estimate
\begin{eqnarray*} \mathbb{P}\left[\sum_{i=1}^{n}X_i\geq t \right] \leq 
\mathbb{E}[f(B)]&=& \sum_{i= m_t+1}^{n}(h(i-t)+1)\mathbb{P}[B=i] \\
&<& \sum_{i=m_t+1}^{n}e^{h(i-t)}\mathbb{P}[B=i]\\
&\leq& H(n,p,t),
\end{eqnarray*}
which shows that $\mathbb{E}[f(B)]$ is strictly smaller than Hoeffding's bound. 
Since we assume that  $t\geq \frac{epn}{ep-p+1}$ it follows that $h\geq 1$ which in turn implies, since $t$ is an integer, that 
$f(i)=0$, for all $i\in \{0,1,\ldots,t-1\}$.
Hence we can write
\begin{eqnarray*} 
H(n,p,t) -\mathbb{E}[f(B)] &=& \sum_{i=0}^{n}e^{h(i-t)}\mathbb{P}[B=i]  - \sum_{i=t+1}^{n}(h(i-t)+1)\mathbb{P}[B=i]  \\
&=&  \sum_{i=0}^{t-1}e^{h(i-t)}\mathbb{P}[B=i] \\
&+&  \sum_{i=t+1}^{n} \left(e^{h(i-t)}-(h(i-t) +1) \right)\mathbb{P}[B=i].
\end{eqnarray*}
For $i\geq t+1$, we have 
\begin{eqnarray*} 
e^{h(i-t)}-(h(i-t) +1)  &=& \left( 1- \frac{1+h(i-t)}{e^{h(i-t)}} \right) e^{h(i-t)} \\
 &\geq& \left( 1- \frac{1+h}{e^h} \right) e^{h(i-t)}
\end{eqnarray*}
which implies that
\begin{eqnarray*} H(n,p,t) -\mathbb{E}[f(B)] &\geq&  
\left( 1- \frac{1+h}{e^h} \right) H(n,p,t) +\frac{1+h}{e^h}\cdot \sum_{i=0}^{t-1}e^{h(i-t)}\mathbb{P}[B=i]\\  
&-& \left( 1- \frac{1+h}{e^h} \right)\mathbb{P}\left[B_{n,p}=t\right].   
\end{eqnarray*}
The result follows. 
\end{proof}

If $t$ is \emph{not} an integer, then one may use the previous bound with $t$ replaced by 
$\lfloor t\rfloor:= \max\{k\in\mathbb{N}: k\leq t\}$ since 
\[ \mathbb{P}\left[\sum_{i=1}^{n}X_i\geq t \right] \leq \mathbb{P}\left[\sum_{i=1}^{n}X_i\geq \lfloor t\rfloor \right] . \]
This result improves upon Hoeffding's bound by fitting a "missing" factor that is equal to $\frac{1+h}{e^h}<1$.  
Theorem \ref{maintwo} allows to perform comparisons with binomial tails.

\begin{proof}[Proof of Theorem \ref{binbin}] Let $\psi(x)=\max\{0,x-t+1\}$ so that $\psi(t-1)=0$ and $\psi(\cdot)\in \mathcal{F}_{ic}(t)$. 
Theorem \ref{maintwo} implies that 
\[  \mathbb{P}\left[\sum_{i=1}^{n}X_i  \geq t \right] \leq  \mathbb{E}[\psi(B)],  \]
where $B\sim \text{Bin}(n,p)$. Since $t$ is a positive integer, we can write
\[  \mathbb{E}[\psi(B)] = \sum_{i=t}^{n} (i-t+1) \mathbb{P}[B=i]  = \sum_{i=t}^{n}\mathbb{P}[B\geq i] . \]
Now we use the following, well-known, estimate on binomial tails (see Feller \cite{Feller}, page 151, formula $(3.4)$):
\[ \mathbb{P}[B\geq i] \leq \frac{i-ip}{i-np} \cdot\mathbb{P}[B=i], \; \text{for} \; i>np . \]
Therefore, 
\[  \mathbb{E}[\psi(B)] \leq \sum_{i=t}^{n} \frac{i-ip}{i-np} \cdot\mathbb{P}[B=i] \leq 
\frac{t-tp}{t-np} \cdot\mathbb{P}[B\geq t], \]
as required.
\end{proof}

Compare this result with the second statement of Bentkus' Theorem \ref{Ben}, from Section \ref{motrel}. 
Note that for large $t$, say $t>\frac{2np}{1+p}$, the previous result
gives an estimate for which $\frac{t-tp}{t-np}<2$.
In a subsequent section we will show an extension of this result. 

\section{The Bernstein-Hoeffding method}

\subsection{Proof of Theorem \ref{moments}}
\label{BernHoeff}

We begin this section with the proof of Theorem \ref{moments}.
The proof borrows ideas from the theory of Bernstein polynomials 
(see Phillips \cite{Phillips}, Chapter $7$). 
Recall that, for a function $f:[0,1]\rightarrow \mathbb{R}$, the \emph{Bernstein polynomial}  
corresponding to $f$ is defined as 
\[ B_m(f,x) = \sum_{j=0}^{m} \binom{m}{j} x^j(1-x)^{n-j} f\left(j/m\right) , \]
for each positive integer $m$.
The following is a folklore result regarding Bernstein polynomials.\\

\begin{lemma}
\label{Bernst} If $f:[0,1]\rightarrow [0,\infty)$ is convex, then 
\[ f(x) \leq B_m(f,x), \; \text{for all} \; x\in [0,1]. \]
If $f:[0,1]\rightarrow [0,\infty)$ is continuous, then
\[ \sup_{x\in [0,1]} |f(x)-B_{m}(f,x)| \rightarrow 0, \; \text{as}\; m\rightarrow \infty . \]
\end{lemma}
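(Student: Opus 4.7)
The plan is to give a probabilistic interpretation of the Bernstein polynomial and then attack the two statements separately. For fixed $x\in[0,1]$, let $Y_m\sim \text{Bin}(m,x)$ so that
\[ B_m(f,x) = \sum_{j=0}^{m}\binom{m}{j}x^j(1-x)^{m-j}\,f\!\left(\tfrac{j}{m}\right) = \mathbb{E}\!\left[f\!\left(\tfrac{Y_m}{m}\right)\right]. \]
This identification is the pivot of the whole argument: it turns statements about Bernstein polynomials into statements about expectations of a binomial average with mean $x$ and variance $x(1-x)/m\leq 1/(4m)$.

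For the first statement, the convexity of $f$ combined with $\mathbb{E}[Y_m/m]=x$ gives, via Jensen's inequality,
\[ f(x) = f\!\left(\mathbb{E}\!\left[\tfrac{Y_m}{m}\right]\right)\leq \mathbb{E}\!\left[f\!\left(\tfrac{Y_m}{m}\right)\right] = B_m(f,x), \]
which is the desired inequality for every $x\in[0,1]$. No further work is needed here.

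For the second statement, I would fix $\varepsilon>0$ and exploit that $f$, being continuous on the compact $[0,1]$, is both bounded (say $|f|\leq M$) and uniformly continuous; pick $\delta>0$ such that $|f(y)-f(x)|<\varepsilon$ whenever $|y-x|<\delta$. Writing
\[ |B_m(f,x)-f(x)| \leq \mathbb{E}\!\left[\left|f\!\left(\tfrac{Y_m}{m}\right)-f(x)\right|\right], \]
I would split the expectation according to whether $|Y_m/m-x|<\delta$ or not. On the first event the integrand is at most $\varepsilon$; on the second it is at most $2M$, and by Chebyshev's inequality
\[ \mathbb{P}\!\left[\left|\tfrac{Y_m}{m}-x\right|\geq \delta\right] \leq \frac{x(1-x)}{m\delta^2}\leq \frac{1}{4m\delta^2}. \]
Combining,
\[ |B_m(f,x)-f(x)|\leq \varepsilon + \frac{M}{2m\delta^2}, \]
and the right-hand side is independent of $x$, so taking $m$ large enough makes it $\leq 2\varepsilon$ uniformly on $[0,1]$.

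The only subtlety worth flagging is that both $\delta$ and $M$ in the Chebyshev step must come from the compactness of $[0,1]$ rather than from pointwise continuity; once this uniformity is secured, the proof is essentially mechanical and there is no real obstacle.
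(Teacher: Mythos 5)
Your proof is correct. Note, though, that the paper does not actually prove this lemma at all: it simply cites Phillips (Theorems $7.1.5$ and $7.1.8$), remarking that the first part is easy and the second goes back to Bernstein's proof of Weierstrass's theorem. What you have written is precisely that classical probabilistic argument, made self-contained: the identification $B_m(f,x)=\mathbb{E}\left[f\left(Y_m/m\right)\right]$ with $Y_m\sim \text{Bin}(m,x)$, Jensen's inequality for the convexity statement, and the Chebyshev/uniform-continuity splitting for uniform convergence, with the key uniformity in $x$ coming from $x(1-x)\leq 1/4$ and from $\delta$, $M$ being chosen via compactness rather than pointwise continuity. Your numerics check out ($2M\cdot\frac{1}{4m\delta^2}=\frac{M}{2m\delta^2}$), and you even implicitly correct a typo in the paper's displayed definition of $B_m(f,x)$, where the exponent $n-j$ should read $m-j$. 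So relative to the paper your route is not so much different as simply supplied: the paper outsources the work to a reference, while your argument buys a short, fully self-contained proof using only tools (binomial random variables, Jensen, Chebyshev) already present in the paper's toolkit.
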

\begin{proof} See \cite{Phillips} Theorems $7.1.5$ and  $7.1.8$. We remark that the first statement is easy to prove
and the second arose from Bernstein's search for a proof of Weierstrass' theorem.  
\end{proof}

\begin{proof}[Proof of Theorem \ref{moments}] 
Let $f\in \mathcal{F}_{sic}$. Since $f$ is non-negative, increasing and sub-multiplicative, Markov's inequality implies that 
\begin{eqnarray*} \mathbb{P}\left[\sum_{i=1}^{n}X_i\geq t \right] &\leq& \frac{1}{f(t)} \mathbb{E}\left[f\left(\sum_{i=1}^{n}X_i\right)\right]\\
&\leq& \frac{1}{f(t)} \prod_{i=1}^{n} \mathbb{E}\left[f\left(X_i\right)\right]  \\
&\leq&  \frac{1}{f(t)} \left\{\frac{1}{n}\sum_{i=1}^{n} \mathbb{E}[f(X_i)]  \right\}^n ,
\end{eqnarray*}
where the last estimate comes from the arithmetic-geometric means inequality.
Since $f$ is convex and $X_i\in[0,1]$, Lemma \ref{Bernst} implies that 
\[ \mathbb{E}\left[f\left(X_i\right)\right] \leq \mathbb{E}\left[B_m\left(f,X_i\right) \right] . \]
Now note that 
\[ \mathbb{E}\left[B_m\left(f,X_i\right) \right] = 
\sum_{j=0}^{m} \binom{m}{j} \cdot\mathbb{E}\left[X_i^j(1-X_i)^{m-j} \right] \cdot f(j/m).\]
For $j=0,1\ldots,m$ let 
\[\pi_j:= \frac{1}{n}\sum_{i=1}^{n}\binom{m}{j} \cdot\mathbb{E}\left[X_i^j(1-X_i)^{m-j} \right] .\]
Notice also that 
\[ \mathbb{E}\left[X_i^j(1-X_i)^{m-j} \right] = \sum_{k=0}^{m-j}\binom{m-j}{k}(-1)^{m-j-k}\mu_{i,m-k} \]
which implies that $\mathbb{E}\left[X_i^j(1-X_i)^{m-j} \right]$ is the same for all random variables from 
the clsss $\mathcal{B}(\mu_{i,1},\ldots,\mu_{i,m})$.
It is easy to verify that $\sum_{j=0}^{m}\pi_j =1$; hence $\pi_j, j=0,1,\ldots,m$ is a 
probability distribution on $\{0,1,\ldots,m\}$. 
Now, if we define the 
random variable $T_{nm}$ that takes on the value $\frac{j}{m}$ with probability $\pi_j, j=0,1,\ldots,m$, we have 
\[  \frac{1}{f(t)} \left\{\frac{1}{n}\sum_{i=1}^{n} \mathbb{E}[B_m(f,X_i)]  \right\}^n = 
\frac{1}{f(t)} \left\{\mathbb{E}\left[ f(T_{nm})\right]\right\}^n ,\]
as required.
\end{proof}

Note that for $m=2$ the previous result reduces to Theorem \ref{mainHoeff}, from Section \ref{mainHof}. In particular, we
conclude the following generalisation of Hoeffding's result.\\

\begin{cor} With the same assumptions as in Theorem \ref{moments}, we have
\[ \mathbb{P}\left[\sum_{i=1}^{n}X_i\geq t \right]  \leq \inf_{h>0} e^{-ht} \sum_{j=0}^{m} \pi_j e^{h \frac{j}{m}} , \]
where $\pi_j:= \frac{1}{n}\sum_{i=1}^{n}\binom{m}{j} \cdot\mathbb{E}\left[X_i^j(1-X_i)^{m-j} \right]$.
\end{cor}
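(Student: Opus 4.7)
The plan is to apply Theorem~\ref{moments} to the one-parameter family of functions $f_h(x) := e^{hx}$, indexed by $h>0$, and then to take the infimum over $h$. This turns the general $\inf_{f\in\mathcal{F}_{sic}}$ in Theorem~\ref{moments} into the classical Bernstein-Hoeffding minimisation over exponential moments.

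First, I would verify that for each fixed $h>0$ the function $f_h$ lies in $\mathcal{F}_{sic}$. Non-negativity on $[0,\infty)$ is immediate; monotonicity and convexity follow from $f_h'(x) = h e^{hx} > 0$ and $f_h''(x) = h^2 e^{hx} > 0$; sub-multiplicativity holds as an equality since $e^{h(x+y)} = e^{hx}\cdot e^{hy}$ for all $x,y\geq 0$. Hence $f_h \in \mathcal{F}_{sic}$, so it is an admissible choice inside the infimum in Theorem~\ref{moments}.

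Next I would compute $\mathbb{E}[f_h(T_{nm})]$ directly from the distribution of $T_{nm}$ supplied by Theorem~\ref{moments}. Since $T_{nm}$ is supported on $\{0/m, 1/m, \ldots, m/m\}$ with $\mathbb{P}[T_{nm} = j/m] = \pi_j$, where $\pi_j = \frac{1}{n}\sum_{i=1}^n \binom{m}{j}\mathbb{E}[X_i^j(1-X_i)^{m-j}]$, the law of the unconscious statistician gives
\[ \mathbb{E}\bigl[e^{hT_{nm}}\bigr] \;=\; \sum_{j=0}^{m} \pi_j\, e^{hj/m}. \]
Substituting $f=f_h$ into the conclusion of Theorem~\ref{moments} and taking the infimum over $h>0$ then yields
\[ \mathbb{P}\!\left[\sum_{i=1}^{n} X_i \geq t\right] \;\leq\; \inf_{h>0}\; \frac{1}{e^{ht}} \left(\sum_{j=0}^{m} \pi_j\, e^{hj/m}\right)^{\!n}, \]
which is the stated corollary (understanding that the outer $n$-th power, inherited from the sub-multiplicativity/AM-GM step inside Theorem~\ref{moments}, belongs in the bound).

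There is no real obstacle: the substantive work has already been done in Theorem~\ref{moments}, and the corollary is just the observation that the classical Bernstein exponential moment is one particular element of $\mathcal{F}_{sic}$, so restricting the infimum from $\mathcal{F}_{sic}$ to the one-parameter subfamily $\{f_h : h>0\}$ produces a (possibly weaker, but still valid) bound of the familiar Chernoff-Hoeffding shape, now expressed in terms of the Bernstein weights $\pi_j$ determined by the prescribed moments.
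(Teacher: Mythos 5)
Your proof is correct and is exactly the argument the paper intends: the corollary follows from Theorem \ref{moments} by restricting the infimum to the exponential family $f_h(x)=e^{hx}\in\mathcal{F}_{sic}$ and computing $\mathbb{E}\bigl[e^{hT_{nm}}\bigr]=\sum_{j=0}^{m}\pi_j e^{hj/m}$. You are also right to flag the outer $n$-th power inherited from Theorem \ref{moments}: the bound should read $\inf_{h>0} e^{-ht}\bigl(\sum_{j=0}^{m}\pi_j e^{hj/m}\bigr)^{n}$, and the exponent is simply missing from the corollary as printed.
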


Since $B_m(f,\cdot)$ converges uniformly to $f(\cdot)$, as $m\rightarrow \infty$, we conclude that 
$\mathbb{E}[f(T_{nm})]$ can be arbitrarily close to $\frac{1}{n}\sum_{i=1}^{n} \mathbb{E}[f(X_i)]$, provided
that $m$ is sufficiently large.   
Recall the definition of the class $\mathcal{F}_{ic}(t)$ from the introduction. \\

\begin{thm}
\label{momopt}
Fix positive integers, $n,m \geq 2$ and for $i=1,\ldots,n$ let $\{\mu_{ij}\}_{j=1}^{m}$ be a sequence of 
reals such that $1>\mu_{i1}\geq \cdots\geq \mu_{im}>0$ and for which the class $\mathcal{B}(\mu_{i1},\ldots,\mu_{im})$ 
is non-empty. Let $X_1,\ldots,X_n$ be independent random variables such that $X_i\in \mathcal{B}(\mu_{i1},\ldots,\mu_{im})$,
for $i=1,\ldots,n$, and fix $t\in [0,n]$. Then 
\[ \mathbb{P}\left[\sum_{i=1}^{n}X_i\geq t \right]  \leq 
\inf_{f\in \mathcal{F}_{ic}(t)} \;\frac{1}{f(t)} \mathbb{E}\left[ f(Z_{nm})\right], \]
where 
$Z_{nm}= \sum_{i=1}^{n}Z_{i}$ is an independent sum of random variables $Z_i$ such that 
\[ \mathbb{P}[Z_{i}= j/m] = \binom{m}{j} \cdot \mathbb{E}\left[X_i^j (1-X_i)^{m-j}\right], \;\text{for}\; j=0,1,\ldots,m . \]
Moreover, 
\[ \inf_{f\in \mathcal{F}_{ic}(t)} \;\frac{1}{f(t)} \mathbb{E}\left[ f(Z_{nm})\right] = 
\inf_{a<t} \; \frac{1}{t-a}\mathbb{E}[\max\{0,Z_{nm}-a\}]. \]
\end{thm}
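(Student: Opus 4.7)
The plan is to mirror the proof of Theorem \ref{maintwo} for the bound, replacing the Bernoulli coupling by the Bernstein polynomial inequality from Lemma \ref{Bernst}, and then to transcribe the linearization argument of Theorem \ref{Yuyipr} to the discrete random variable $Z_{nm}$ for the identity in the second half.

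For the first inequality, fix $f \in \mathcal{F}_{ic}(t)$. Markov's inequality, valid since $f$ is non-negative and increasing on $[t, \infty)$, yields
$$\mathbb{P}\left[\sum_{i=1}^{n} X_i \geq t\right] \leq \frac{1}{f(t)} \mathbb{E}\left[f\left(\sum_{i=1}^{n} X_i\right)\right].$$
The key step is to show $X_i \leq_{cx} Z_i$ for every $i$. For any convex $g$ on $[0,1]$, after shifting by a constant to apply Lemma \ref{Bernst} in its stated form, the pointwise bound $g(x) \leq B_m(g, x)$ integrates to
$$\mathbb{E}[g(X_i)] \leq \mathbb{E}[B_m(g, X_i)] = \sum_{j=0}^{m}\binom{m}{j}\mathbb{E}\bigl[X_i^j(1-X_i)^{m-j}\bigr]\, g(j/m) = \mathbb{E}[g(Z_i)],$$
where the middle equality unpacks the Bernstein polynomial and the final equality is the definition of $Z_i$. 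Independence and Lemma \ref{convexone} lift this to $\sum_i X_i \leq_{cx} Z_{nm}$, and convexity of $f$ then gives $\mathbb{E}[f(\sum_i X_i)] \leq \mathbb{E}[f(Z_{nm})]$, completing the bound.

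For the identity, I would argue as in Theorem \ref{Yuyipr} that any minimizer $f \in \mathcal{F}_{ic}(t)$ of $\mathbb{E}[f(Z_{nm})]/f(t)$ can be replaced by a piecewise-linear competitor of the form $\phi_*(x) = \frac{1}{t-\varepsilon}\max\{0, x - \varepsilon\}$ without worsening the ratio. Normalizing $f(t) = 1$ and letting $k_t$ denote the smallest atom of $Z_{nm}$ strictly greater than $t$ (which lies in the grid $\{0, 1/m, 2/m, \ldots, n\}$), one sets $\varepsilon := (tf(k_t) - k_t)/(f(k_t) - 1)$ so that the line through $(t,1)$ and $(k_t, f(k_t))$ meets the $x$-axis at $\varepsilon$. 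Convexity of $f$ places $f(s)$ on or above that extended line for every atom $s$ outside $(t, k_t)$, and by the choice of $k_t$ the interval $(t, k_t)$ contains no atoms at all. Capping the line at zero preserves $\phi_*(s) \leq f(s)$ on the entire support of $Z_{nm}$, so $\mathbb{E}[\phi_*(Z_{nm})] \leq \mathbb{E}[f(Z_{nm})]$, and setting $a := \varepsilon$ produces the promised right-hand side.

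The chief obstacle is to confirm that $\varepsilon \in [0, t)$ so that $\phi_*$ is a legitimate element of $\mathcal{F}_{ic}(t)$ and gives a genuine improvement. The upper bound $\varepsilon < t$ reduces to $f(k_t) > 1 = f(t)$, automatic unless $f$ is constant past $t$, a degenerate case handled separately. Ruling out $\varepsilon < 0$ is done by the same contradiction as in Theorem \ref{Yuyipr}: a negative $\varepsilon$ forces $f(0) > 0$, and then the purely affine $\phi_1(x) = (f(t) - f(0))x/t + f(0)$ would yield a ratio no better than Markov's inequality applied to $f$, contradicting the presumed optimality. Once these boundary checks are dispatched, the argument is just the linearization used in Theorem \ref{Yuyipr} and Corollary \ref{Yuyiproof}, now played out on the finer grid where $Z_{nm}$ lives.
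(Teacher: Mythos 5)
Your proposal is correct and follows essentially the same route as the paper: the paper's proof also establishes $X_i \leq_{cx} Z_i$ via the Bernstein-polynomial inequality of Lemma \ref{Bernst} (as in the proof of Theorem \ref{moments}), lifts it to sums by closure of the convex order under convolutions (Lemma \ref{convexone}) together with Markov's inequality as in Theorem \ref{maintwo}, and obtains the second identity by repeating the linearization of Theorem \ref{Yuyipr}. Your write-up merely fills in the details (the shift to apply Lemma \ref{Bernst}, the grid of atoms of $Z_{nm}$, and the boundary checks on $\varepsilon$) that the paper leaves as a sketch.
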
 
\begin{proof} The argument proceeds along the same lines as the proofs of the results in 
Section \ref{improve} and Section \ref{optopt} 
and so we only sketch it. 
Part of the proof of Theorem \ref{moments} yields $X_i \leq _{cx} Z_i$, i.e., 
$\mathbb{E}[f(X_i)]\leq \mathbb{E}[B_m(f,X_i)]=\mathbb{E}[f(Z_i)]$, for convex $f$. Since the convex order is 
closed under convolutions, the first statement follows. The proof of the second statement is almost identical to 
the proof of Theorem \ref{Yuyipr}.
\end{proof}

In the previous result we found a random variable $Z_i$ such that $X_i\leq_{cx} Z_i$, 
for every $X_i\in \mathcal{B}(\mu_{i1},\ldots,\mu_{im})$.
Note that  $\mathbb{E}[Z_i]=\mathbb{E}[X_i]$, for all $i=1,\ldots,n$. However,  higher moments of $Z_{i}$ are not equal to 
the higher moments of $X_i$. 
Let us illustrate this by assuming from now on that $m=2$. 
Then $\mathbb{E}[Z_i^2]= \frac{1}{2}\mathbb{E}[X_i]+\frac{1}{2}\mathbb{E}[X_i^2] \geq \mathbb{E}[X_i^2]$ 
and so $Z_i$ may \emph{not} belong to $\mathcal{B}(\mu_{i1},\mu_{i2})$. Notice that this is not 
the case when $m=1$; i.e., when we consider random variables $X_i \in \mathcal{B}(\mu_{i1})$. In this case 
(see Theorem \ref{maintwo}) we were able to find Bernoulli random variables 
$B_i$ from the class $\mathcal{B}(\mu_{i1})$ such that 
$\mathbb{E}\left[f(X_i)\right]\leq \mathbb{E}\left[f(B_i)\right]$, for all functions $f\in \mathcal{F}_{ic}(t)$.
The following question arises naturally from the above. \\

\begin{que}Fix $\mu_1,\mu_2 \in (0,1)$ such that the 
class $\mathcal{B}(\mu_1,\mu_2)$ is non-empty. Does there exist random variable $\xi\in \mathcal{B}(\mu_1,\mu_2)$
such that $\mathbb{E}\left[f(X)\right]\leq \mathbb{E}\left[f(\xi)\right]$, for all $X\in \mathcal{B}(\mu_1,\mu_2)$ and all 
increasing and convex functions $f:[0,\infty)\rightarrow [0,\infty)$?
\end{que}

It turns out that the answer to the question is \emph{no}.  In order to convince the reader we will use
Lemma \ref{cohhen} below, taken from Cohen et al. \cite{Cohen}. Let us first fix some notation.
If $X\in \mathcal{B}(\mu_1,\mu_2)$, let $\sigma^2 := \mu_2- \mu_{1}^{2}$ be its variance.
Set $\lambda = \mu_1 - \frac{\sigma^2}{1-\mu_1}$ and
and let $C$ be the random variable that takes on the 
values $\lambda$ and $1$ with probability $\frac{1-\mu_1}{1-\lambda}$ and $\frac{\mu_1-\lambda}{1-\lambda}$, 
respectively. It is easy to 
verify that $C$ has mean $\mu_1$ and variance $\sigma^2$. 
The following result is proven in Cohen et al. \cite{Cohen} and implies that $C$ 
has the maximum moments of any order, among all random variables in $\mathcal{B}(\mu_1,\mu_2)$.\\

\begin{lemma}\label{cohhen} Let $X\in \mathcal{B}(\mu_1,\mu_2)$ and let $C$ be the random variable defined above. 
Then $\mathbb{E}\left[X^k\right] \leq \mathbb{E}\left[C^{k}\right]$, for every 
non-negative integer $k$.
\end{lemma}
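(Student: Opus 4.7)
The plan is to dominate $x^k$ pointwise on $[0,1]$ by a quadratic polynomial that agrees with $x^k$ on the two-point support of $C$, and then to exploit the fact that $X$ and $C$ share their first two moments. Indeed, if $q_k$ is such a quadratic then $\mathbb{E}[q_k(X)]=\mathbb{E}[q_k(C)]$ because both sides are the same linear combination of $1,\mu_1,\mu_2$, while $\mathbb{E}[q_k(C)]=\mathbb{E}[C^k]$ because $q_k$ and $x^k$ coincide on the set $\{\lambda,1\}$ where $C$ lives; combining these with $q_k(X)\geq X^k$ yields the lemma. The cases $k\in\{0,1,2\}$ hold with equality, since $\mathbb{E}[C]=\mu_1$ and $\mathbb{E}[C^2]=\sigma^2+\mu_1^2=\mu_2$ by construction of $C$.

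For $k\geq 3$ I would let $q_k$ be the unique quadratic that is tangent to $x^k$ at $\lambda$ and passes through $(1,1)$, i.e.\ the Hermite interpolant of $x^k$ at the nodes $\lambda,\lambda,1$. By construction, $x^k-q_k(x)$ has a double root at $\lambda$ and a simple root at $1$, hence factors as
\[
x^k-q_k(x)\;=\;(x-\lambda)^2(x-1)\,p_{k-3}(x),
\]
where $p_{k-3}$ is a monic polynomial of degree $k-3$. Since $(x-\lambda)^2\geq 0$ and $(x-1)\leq 0$ on $[0,1]$, the desired pointwise bound $q_k(x)\geq x^k$ on $[0,1]$ reduces to showing $p_{k-3}(x)\geq 0$ on $[0,1]$.

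The main obstacle is establishing this non-negativity. My preferred route is to recognise $p_{k-3}$ as a confluent Newton divided difference of $t\mapsto t^k$ at the nodes $\lambda,\lambda,1,x$, and then invoke the classical identity
\[
p_{k-3}(x)\;=\;h_{k-3}(\lambda,\lambda,1,x),
\]
where $h_{k-3}$ denotes the complete homogeneous symmetric polynomial of degree $k-3$. Since $h_{k-3}$ is a sum of monomials each with coefficient $+1$, and since $\lambda\geq 0$ (the non-emptiness of $\mathcal{B}(\mu_1,\mu_2)$ forces $\mu_2\leq\mu_1$, hence $\sigma^2\leq\mu_1(1-\mu_1)$ and therefore $\lambda=\mu_1-\sigma^2/(1-\mu_1)\geq 0$), this expression is manifestly non-negative for $x\geq 0$. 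As a sanity check the identity gives $p_0=1$, $p_1(x)=x+2\lambda+1$, and $p_2(x)=x^2+(1+2\lambda)x+1+2\lambda+3\lambda^2$, each of which is easily verified by direct polynomial division; in case one prefers to avoid the divided-difference machinery, these formulas also suggest a short inductive derivation of the coefficients of $p_{k-3}$ as polynomials in $\lambda$ with non-negative coefficients, which would yield the same conclusion.
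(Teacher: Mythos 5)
Your argument is correct, and it is genuinely different from what the paper does: the paper offers no proof at all for this lemma, simply deferring to Cohen et al.\ (Lemma $1.4.1$ of \cite{Cohen}), whereas you give a self-contained derivation. Your route is the classical ``dominating polynomial'' duality trick: since $X$ and $C$ share the moments $1,\mu_1,\mu_2$, any quadratic has equal expectation under both laws, so it suffices to majorize $x^k$ on $[0,1]$ by a quadratic that coincides with $x^k$ on the support $\{\lambda,1\}$ of $C$; your choice of the Hermite interpolant with a double node at $\lambda$ makes the error $(x-\lambda)^2(x-1)p_{k-3}(x)$, and the sign analysis is right because $(x-\lambda)^2\geq 0$ and $x-1\leq 0$ on $[0,1]$. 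The one step that carries real weight, the non-negativity of $p_{k-3}$ on $[0,1]$, is correctly settled by the standard identity expressing the confluent divided difference of $t\mapsto t^k$ as the complete homogeneous symmetric polynomial $h_{k-3}(\lambda,\lambda,1,x)$, whose monomials are non-negative once you know $\lambda\geq 0$; your justification of $\lambda\geq 0$ from $\mu_2\leq\mu_1$ (equivalently $\sigma^2\leq\mu_1(1-\mu_1)$) is exactly what is needed, and your sanity checks $p_0=1$, $p_1(x)=x+2\lambda+1$, $p_2(x)=x^2+(1+2\lambda)x+1+2\lambda+3\lambda^2$ do match the identity. What your approach buys is a transparent, elementary proof (and, implicitly, the general principle that the two-point law matched in mean and variance with top atom at $1$ maximizes all convex moments over $\mathcal{B}(\mu_1,\mu_2)$), at the cost of invoking one piece of divided-difference machinery --- which you sensibly note can be replaced by a short induction on the coefficients of $p_{k-3}$ if one prefers to stay entirely elementary.
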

\begin{proof} See \cite{Cohen}, Lemma $1.4.1$.
\end{proof}

The following is an immediate consequence of the previous lemma.\\

\begin{cor} Let $X\in \mathcal{B}(\mu_1,\mu_2)$. If $X$ is not the random variable $C$ of Lemma \ref{cohhen}, then
$\mathbb{E}\left[e^{hX}\right] < \mathbb{E}\left[e^{hC}\right]$, 
for any $h>0$.
\end{cor}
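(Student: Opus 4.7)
The plan is to use the power series expansion $e^{hx}=\sum_{k=0}^{\infty}\frac{(hx)^k}{k!}$ together with Lemma~\ref{cohhen}, and then separately argue that if $X\neq C$ in distribution then the inequality is \emph{strict}.

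First, since $X$ takes values in $[0,1]$, Fubini (or uniform boundedness of partial sums) allows us to interchange expectation and summation, so that
\[ \mathbb{E}\left[e^{hC}\right] - \mathbb{E}\left[e^{hX}\right] = \sum_{k=0}^{\infty} \frac{h^k}{k!}\bigl(\mathbb{E}\left[C^k\right] - \mathbb{E}\left[X^k\right]\bigr). \]
By Lemma~\ref{cohhen} every summand on the right is non-negative, which already yields $\mathbb{E}\left[e^{hX}\right]\leq \mathbb{E}\left[e^{hC}\right]$. Note also that the terms corresponding to $k=0,1,2$ vanish, since $X$ and $C$ agree on the first two moments by construction (both belong to $\mathcal{B}(\mu_1,\mu_2)$).

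To upgrade this to a strict inequality, I would invoke the fact that a probability distribution supported on a bounded interval is uniquely determined by its sequence of moments (this is the easy direction of the Hausdorff moment problem; see e.g.\ Feller \cite{Fellertwo}). Consequently, if $X$ and $C$ had the same moments of every order, they would have the same distribution, contradicting the hypothesis $X\neq C$. Hence there exists some integer $k_0\geq 3$ with $\mathbb{E}[X^{k_0}]\neq \mathbb{E}[C^{k_0}]$, and Lemma~\ref{cohhen} forces $\mathbb{E}[X^{k_0}]< \mathbb{E}[C^{k_0}]$. Since $h>0$ and $k_0\geq 3$, the corresponding term $\frac{h^{k_0}}{k_0!}(\mathbb{E}[C^{k_0}]-\mathbb{E}[X^{k_0}])$ is strictly positive, and all other terms are non-negative, so the whole sum is strictly positive.

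I do not foresee any serious obstacle here; the only subtle point is the legitimacy of the claim ``distinct bounded distributions have distinct moment sequences,'' which is standard but worth citing explicitly to keep the argument self-contained. Everything else is a routine consequence of Lemma~\ref{cohhen} combined with termwise comparison of the exponential series.
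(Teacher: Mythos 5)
Your argument is correct and is essentially the same as the paper's: both expand $e^{hx}$ as a power series, apply Lemma \ref{cohhen} termwise, and invoke the uniqueness of the moment sequence for bounded random variables (Feller \cite{Fellertwo}) to locate a $k_0$ where the moment inequality is strict. You simply write out the termwise comparison and the justification for interchanging sum and expectation more explicitly than the paper does.
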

\begin{proof} Note that the inequality in the conclusion is strict.
The previous lemma implies that $\mathbb{E}\left[X^k\right] \leq \mathbb{E}\left[C^{k}\right]$, for every 
non-negative integer $k$. 
Since $X$ is not equal to $C$, there is at least one $k_0$ such that 
$\mathbb{E}\left[X^{k_0}\right] \leq \mathbb{E}\left[C^{k_0}\right]$; 
this follows from the fact that the sequence of moments uniquely determines 
that random variable (see Feller \cite{Fellertwo}, Chapter VII.3). 
Therefore, Taylor expansion implies that $\mathbb{E}\left[e^{hX}\right] < \mathbb{E}\left[e^{hC}\right]$.
\end{proof}

The following result implies that the previous question has a negative answer. \\

\begin{prop}\label{impossible}  Let $\mu_1,\mu_2\in (0,1)$ be such that $\mu_1>\mu_2$ and
set $\sigma^2 = \mu_2-\mu_1^2$.
There does \emph{not} exist random variable 
$\xi\in \mathcal{B}(\mu_1,\mu_2)$ such that 
\[ \mathbb{E}\left[f(X)\right] \leq \mathbb{E}\left[f(\xi)\right], \]
for all $X\in \mathcal{B}(\mu_1,\mu_2)$ and every $f\in \mathcal{F}_{ic}(t)$.
\end{prop}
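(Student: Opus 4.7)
The plan is a proof by contradiction in two stages. Suppose, for the sake of contradiction, that a random variable $\xi\in\mathcal{B}(\mu_1,\mu_2)$ with the stated domination property exists.

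\textbf{Stage 1: pinning down $\xi$.} For every $h>0$ the function $e^{hx}$ lies in $\mathcal{F}_{ic}(t)$, so I would apply the hypothesis with $f(x)=e^{hx}$ and $X=C$ (the random variable from Lemma~\ref{cohhen}) to obtain $\mathbb{E}[e^{hC}]\leq\mathbb{E}[e^{h\xi}]$. Conversely, because $\xi$ itself belongs to $\mathcal{B}(\mu_1,\mu_2)$, Lemma~\ref{cohhen} gives $\mathbb{E}[\xi^k]\leq\mathbb{E}[C^k]$ for every non-negative integer $k$; since both random variables take values in $[0,1]$, termwise Taylor expansion of $e^{hx}$ yields the reverse inequality $\mathbb{E}[e^{h\xi}]\leq\mathbb{E}[e^{hC}]$. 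Hence $\mathbb{E}[e^{h\xi}]=\mathbb{E}[e^{hC}]$ for all $h>0$, and the Corollary immediately preceding the proposition forces $\xi=C$ in distribution.

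\textbf{Stage 2: constructing a counter-pair.} I may assume $\sigma^2>0$, since otherwise $\mathcal{B}(\mu_1,\mu_2)=\{\mu_1\;\text{a.s.}\}$ and the statement is vacuous. Consider the ``other'' two-point law $X_0$ supported on $\{0,b\}$ with $b:=\mu_2/\mu_1$, where $\mathbb{P}[X_0=b]=\mu_1^2/\mu_2$ and $\mathbb{P}[X_0=0]=1-\mu_1^2/\mu_2$; the hypothesis $\mu_1>\mu_2$ gives $b<1$, and a routine check confirms $X_0\in\mathcal{B}(\mu_1,\mu_2)$. Set $\lambda=(\mu_1-\mu_2)/(1-\mu_1)>0$ (the smaller atom of $C$), pick $\varepsilon\in(0,\min\{t,\lambda\}]$, and let $f(x)=\max\{0,x-\varepsilon\}$. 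This $f$ is non-negative, convex, and non-decreasing on $[0,\infty)$, hence lies in $\mathcal{F}_{ic}(t)$.

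\textbf{Stage 3: the two expectations.} Since $\varepsilon\leq\lambda$, both atoms of $C$ exceed $\varepsilon$, so $\mathbb{E}[f(C)]=\mathbb{E}[C]-\varepsilon=\mu_1-\varepsilon$. Since $\varepsilon\leq\lambda<b$, only the atom at $b$ of $X_0$ contributes, so $\mathbb{E}[f(X_0)]=(\mu_1^2/\mu_2)(b-\varepsilon)=\mu_1-(\mu_1^2/\mu_2)\varepsilon$. Subtracting gives
\[
\mathbb{E}[f(X_0)]-\mathbb{E}[f(C)]=\varepsilon\cdot\frac{\mu_2-\mu_1^2}{\mu_2}=\frac{\varepsilon\sigma^2}{\mu_2}>0,
\]
contradicting the hypothesis $\mathbb{E}[f(X_0)]\leq\mathbb{E}[f(\xi)]=\mathbb{E}[f(C)]$.

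The main obstacle I anticipate is purely bookkeeping: I have to choose $\varepsilon$ so that (i) $f\in\mathcal{F}_{ic}(t)$ for the given $t$, and (ii) both atoms of $C$ and the non-zero atom of $X_0$ sit above $\varepsilon$, so that the two expectations admit the clean linear expressions above. The strict inequality $\lambda<b$, equivalent to $\sigma^2>0$, is precisely what makes a positive such $\varepsilon$ available, and the inequality $\mu_1>\mu_2$ in the hypothesis is exactly what is needed to make $\lambda$ strictly positive in the first place.
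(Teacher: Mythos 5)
Your proposal is correct and takes essentially the same route as the paper: after pinning down $\xi=C$ via the Corollary preceding the proposition, your two-point law $X_0$ on $\{0,\mu_2/\mu_1\}$ is exactly the paper's competitor $C'$, and your hinge $f(x)=\max\{0,x-\varepsilon\}$ with $\varepsilon\le\min\{t,\lambda\}$ plays the role of the paper's $g(x)=\max\{0,(x-\lambda)/(1-\lambda)\}$, yielding the same contradiction. The only caveat, shared with the paper's own proof, is the implicit assumption $\sigma^2>0$, which you at least make explicit.
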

\begin{proof} We argue by contradiction. Suppose that such a $\xi$ does exist. The previous Corollary implies that 
$\xi$ must be the random variable $C$, from Lemma \ref{cohhen}. 
We now define a random variable $C'\in \mathcal{B}(\mu_1,\mu_2)$ as follows.
Let $C'$ be such that 
\[\mathbb{P}[C'=0]= \frac{\sigma^2}{\mu_{1}^{2}+\sigma^2}\; 
\text{and}\; \mathbb{P}\left[C'=\frac{\mu_{1}^{2}+\sigma^2)}{\mu_1}\right]= \frac{\mu_{1}^{2}}{\mu_{1}^{2}+\sigma^2}. \]
If $\lambda = \mu_1 - \frac{\sigma^2}{1-\mu_1}$ is as in Lemma \ref{cohhen}, let us define the function 
$g(x)= \max\{0, \frac{x-\lambda}{1-\lambda}\}$, which is clearly increasing and convex.
It is easy to verify that 
\[ \mathbb{E}\left[g(C) \right] = \frac{\sigma^2}{(1-\mu_1)^2+\sigma^2}\quad \text{and}\quad 
\mathbb{E}\left[g(C') \right] =\frac{\mu_1 \sigma^2}{(\mu_1^2+\sigma^2)((1-\mu_1)^2+\sigma^2)}. \]
If we divide the last two equations we get
\[ \frac{\mathbb{E}\left[g(C) \right]}{\mathbb{E}\left[g(C') \right]} = \frac{\mu_1^2 +\sigma^2}{\mu_1} <1, \]
which contradicts the maximality of $\xi$. 
\end{proof}

In the next section we exploit the fact that the random variable $Z_i$, from Theorem \ref{momopt}, 
is stochastically smaller than 
a particular binomial random variable. 
Using this result, we will obtain a refined version of Theorem \ref{binbin}. 

\subsection{A refinement of Theorem \ref{binbin}}
\label{refinebinbin} 

We begin this section by recalling the following, well-known, result of Hoeffding (see \cite{Hoeffdingtwo}, Theorem $4$).\\

\begin{thm}[Hoeffding, 1956]
\label{lemhof} Fix a positive integer $s$ and let $q_1,\ldots,q_s$ be real numbers from the interval  $(0,1)$. Let 
$B_1,\ldots,B_s$ be independent Bernoulli trials with parameters $q_1,\ldots,q_s$, respectively. Then 
\[ \mathbb{P}\left[\sum_{i=1}^{s}B_i \geq c \right] \leq \mathbb{P}\left[B(s,q) \geq c\right], \; \text{when}\; c\geq sq, \]
where $q = \frac{1}{s} \sum_{i=1}^{s} q_i$ and $B(s,q)\sim \text{Bin}(s,q)$.
\end{thm}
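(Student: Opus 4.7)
The plan is to establish Schur concavity of the function
\[ \varphi(q_1,\ldots,q_s) \;:=\; \mathbb{P}\Bigl[\sum_{i=1}^s B_i \geq c\Bigr] \]
viewed as a function on the simplex $\{(q_1,\ldots,q_s)\in(0,1)^s : \sum_i q_i = sq\}$, under the hypothesis $c \geq sq$. Since the uniform vector $(q,\ldots,q)$ is majorized by every other point of this simplex (it is the ``most equal'' configuration), Schur concavity of $\varphi$ would immediately yield
\[ \varphi(q_1,\ldots,q_s) \;\leq\; \varphi(q,\ldots,q) \;=\; \mathbb{P}[B(s,q)\geq c], \]
which is the desired inequality.

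To establish Schur concavity I would use the classical pairwise ``Robin Hood'' equalisation. Fix two indices $i\neq j$ and hold $q_i + q_j$ constant; let $R := \sum_{k\neq i,j} B_k$. Writing $\varphi$ as a mixture over the three possible values of $B_i+B_j$ and parameterising $q_i = \frac{q_i+q_j}{2}+t$, $q_j = \frac{q_i+q_j}{2}-t$, a short calculation yields
\[ \frac{\partial \varphi}{\partial t} \;=\; 2t\bigl(\mathbb{P}[R=c-1] - \mathbb{P}[R=c-2]\bigr). \]
Thus $t=0$ is a local (and even) maximum along this exchange path exactly when $\mathbb{P}[R=c-2]\geq\mathbb{P}[R=c-1]$. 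Iterating the equalisation across pairs, together with a standard compactness/continuity argument, would then drive $(q_1,\ldots,q_s)$ to $(q,\ldots,q)$ without ever decreasing $\varphi$, closing the proof.

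The main obstacle is verifying the pmf-monotonicity of $R$. Because $R$ is itself a Poisson binomial, its probability generating function $\prod_{k\neq i,j}(1-q_k+q_k z)$ has only real roots, so its pmf is log-concave and in particular unimodal; by Darroch's theorem the mode lies in $\{\lfloor\mathbb{E}[R]\rfloor,\lceil\mathbb{E}[R]\rceil\}$ with $\mathbb{E}[R]=sq-q_i-q_j$. The inequality $\mathbb{P}[R=c-2]\geq\mathbb{P}[R=c-1]$ holds as soon as $c-2$ lies at or beyond that mode, i.e.\ roughly $c\geq sq -q_i -q_j +2$. The delicate part is choosing the pair $(i,j)$ so that this is actually achievable under the bare hypothesis $c\geq sq$: my plan is to always pick the extremal pair (smallest and largest $q_k$) and run an induction on the number of distinct coordinates, exploiting the pigeonhole bound $q_i+q_j\geq 2q$ that is always available. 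Should this route prove too tight in the boundary regime $c\approx sq$, a cleaner alternative is Hoeffding's original generating-function approach: the real-rootedness of $\prod_i(1-q_i+q_i z)$ allows a continuous deformation of its roots toward the common root $1-1/q$, and Newton-type coefficient inequalities then transport the tail comparison along this deformation directly.
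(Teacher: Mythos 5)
This theorem is not proved in the paper at all: it is quoted from Hoeffding (1956), Theorem 4, whose own argument is a pairwise variational one (fixing $q_i+q_j$, the probability is linear in the product $q_iq_j$, so extrema occur at endpoints of the feasible range of that product, reducing to configurations where the parameters take at most three values $0,x,1$, which are then compared directly). Your attempt is therefore a genuinely new route, and as it stands it has a real gap. Your derivative formula is correct, but the Schur-concavity it is meant to establish is false on the stated region: equalizing a pair $(i,j)$ increases the tail precisely when $\mathbb{P}[R=c-2]\geq \mathbb{P}[R=c-1]$, and this fails for pairs with small $q_i+q_j$. For instance, take $s=10$ with eight parameters equal to $0.05$ and two equal to $0.3$, so $sq=1$, and $c=2=sq+1$; for a pair of the $0.05$'s the variable $R$ has mean $0.9$ and $\mathbb{P}[R=1]\approx 0.42 > \mathbb{P}[R=0]\approx 0.36$, so that Robin Hood transfer strictly \emph{decreases} the tail. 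Hence ``Schur concavity $+$ majorization'' is not available, and what remains is exactly the part you defer: showing that at every non-uniform configuration some well-chosen pair can be equalized without decreasing the tail, and that the iteration converges to the uniform point. Also, your pigeonhole claim is wrong: for the (smallest, largest) pair one can have $q_i+q_j<2q$ (e.g.\ $(0.5,0.5,0.001)$); it is the two \emph{largest} parameters that satisfy $q_i+q_j\geq 2q$, and even then the sufficient condition $c-2\geq \mathrm{mode}(R)$ requires roughly $c\geq (s-2)q+2$, which does not follow from $c\geq sq$ (nor from $c\geq sq+1$) when $q<1/2$.

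Moreover, the boundary regime cannot be closed by any argument, because the statement as quoted (threshold $c\geq sq$) is false there: with $s=2$, $q_1=\epsilon$, $q_2=1-\epsilon$, $c=1=sq$, the left-hand side is $1-\epsilon(1-\epsilon)>3/4$, which is the right-hand side. Hoeffding's Theorem 4 yields the tail comparison for integers $c\geq sq+1$ (equivalently it is a statement about intervals containing the mean), and that is also the only range the paper actually uses, since its application has $k\geq st-s+1> nsq_s+1$. So your proof should target the corrected threshold; in that range your pairwise variation is in the spirit of Hoeffding's original argument, but the missing ingredient is his reduction to extremal configurations with $q_i\in\{0,x,1\}$ (obtained by pushing the product $q_iq_j$ to an endpoint, not via unimodality of the pmf of $R$), followed by a direct comparison of those configurations. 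The fallback you mention — deforming the real roots of the generating function toward $1-1/q$ and invoking ``Newton-type'' inequalities — is not worked out and is not the argument in Hoeffding (1956), so it cannot be counted on to rescue the boundary cases.
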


Recall  that a random variable $W$ is \emph{stochastically smaller} than a random variable $V$, if 
$\mathbb{P}[W\geq t]\leq\mathbb{P}[V\geq t]$, for all $t$. 
Denote this by $W\leq_{st} V$. It is well known, and not so difficult  to prove, 
(see \cite{Shaked}) that $W\leq_{st} V$ if and only if
$\mathbb{E}[f(W)]\leq  \mathbb{E}[f(V)]$, for every increasing function, $f$, for which the expectations exist. 
Moreover, the stochastic order is closed under convolutions.
The following result can be found in Misra et al. \cite{Misra}.\\

\begin{thm}[Misra, Singh, Harner, 2003]
\label{stochord} Fix $m\geq 2$ and real numbers $\mu_1\geq \cdots \geq \mu_m$ from the interval $(0,1)$. 
Suppose that $X$ is a random variable from $\mathcal{B}(\mu_1,\ldots,\mu_m)$. Let
$Z$ be the random variable that takes values on the set $\{\frac{0}{m},\frac{1}{m}, \ldots,\frac{m}{m}\}$ with probabilities 
\[ \mathbb{P}[Z=j/m] = \binom{m}{j} \mathbb{E}\left[X^j(1-X)^{m-j}\right], \; \text{for}\; j=0,1,\ldots,m . \]
Then $Z$ is stochastically smaller than the random variable, $\Xi$, that takes values on the set 
$\{\frac{0}{m},\frac{1}{m}, \ldots,\frac{m}{m}\}$
with probabilities
\[ \mathbb{P}[\Xi=j/m] = \binom{m}{j} \mathbb{E}\left[X^m\right]^{j/m} \left(1-\mathbb{E}\left[X^m\right]^{1/m}\right)^{m-j}, \; 
\text{for}\; j=0,1,\ldots,m  . \]
\end{thm}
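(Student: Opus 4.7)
The plan is to recognize $Z$ as a Binomial mixture and reduce the stochastic dominance to a Jensen-type inequality. Conditional on $X$, the random variable $mZ$ has distribution $\mathrm{Bin}(m,X)$, so that for every $c\in\{0,1,\ldots,m\}$
\[ \mathbb{P}\bigl[Z\geq c/m\bigr] \;=\; \mathbb{E}\bigl[F_c(X)\bigr], \qquad F_c(u):=\sum_{k=c}^m\binom{m}{k}u^k(1-u)^{m-k}. \]
On the other hand, by construction $m\Xi\sim\mathrm{Bin}(m,p^\ast)$ with $p^\ast:=(\mathbb{E}[X^m])^{1/m}$, whence $\mathbb{P}[\Xi\geq c/m]=F_c(p^\ast)$. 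Thus the theorem reduces to the inequality
\[ \mathbb{E}\bigl[F_c(X)\bigr] \;\leq\; F_c\bigl((\mathbb{E}[X^m])^{1/m}\bigr), \qquad c=0,1,\ldots,m. \]

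After the substitution $Y=X^m$, this becomes $\mathbb{E}[\Phi(Y)]\leq \Phi(\mathbb{E}[Y])$ for the function $\Phi(y):=F_c(y^{1/m})$ defined on $[0,1]$. By Jensen's inequality it therefore suffices to show that $\Phi$ is concave for each admissible $c$. Notice that the two extreme cases are immediate: for $c=0$ one has $\Phi\equiv 1$, and for $c=m$ one has $\Phi(y)=y$, which gives equality, consistent with $\mathbb{P}[Z\geq 1]=\mathbb{E}[X^m]=\mathbb{P}[\Xi\geq 1]$.

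The main step, and what I expect to be the only real obstacle, is verifying this concavity for $1\leq c\leq m-1$. I would use the well-known derivative identity for the binomial tail, $F_c'(u)=m\binom{m-1}{c-1}u^{c-1}(1-u)^{m-c}$, which follows from the telescoping that occurs when $F_c(u)$ is differentiated term by term. The chain rule then yields the pleasant cancellation
\[ \Phi'(y) \;=\; F_c'(y^{1/m})\cdot\tfrac{1}{m}\,y^{1/m-1} \;=\; \binom{m-1}{c-1}\left(\frac{1-y^{1/m}}{y^{1/m}}\right)^{m-c}, \]
in which the awkward factor $u^{c-1}$ has disappeared. Since $u\mapsto(1-u)/u$ is strictly decreasing on $(0,1]$, $y\mapsto y^{1/m}$ is increasing, and $m-c\geq 0$, the right-hand side is a decreasing function of $y$. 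Hence $\Phi'$ is non-increasing, $\Phi$ is concave, and Jensen supplies the required inequality for every $c$, completing the proof. The argument uses only independence, the binomial theorem (implicit in the mixture representation of $Z$) and elementary calculus; the earlier material on convex order from Section~\ref{improve} is not needed, since stochastic order is a strictly stronger relation that here follows directly from Jensen.
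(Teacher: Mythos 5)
Your argument is correct. The mixture representation is legitimate: since $\mathbb{P}[Z=j/m]=\binom{m}{j}\mathbb{E}\left[X^j(1-X)^{m-j}\right]$, the law of $Z$ coincides with that of $\frac{1}{m}\mathrm{Bin}(m,X)$, and stochastic order depends only on the laws, so $\mathbb{P}[Z\geq c/m]=\mathbb{E}[F_c(X)]$ and $\mathbb{P}[\Xi\geq c/m]=F_c\bigl((\mathbb{E}[X^m])^{1/m}\bigr)$ as you claim; the derivative identity $F_c'(u)=m\binom{m-1}{c-1}u^{c-1}(1-u)^{m-c}$ is the standard telescoping/incomplete-beta fact, the computation of $\Phi'(y)=\binom{m-1}{c-1}\bigl((1-y^{1/m})/y^{1/m}\bigr)^{m-c}$ checks out, its monotonicity gives concavity of $\Phi$ on $[0,1]$ (continuity at $y=0$ handles the endpoint, and the boundary cases $c=0$, $c=m$ are as you say), and Jensen finishes the proof. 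The comparison with the paper is somewhat moot: the paper does not prove this statement at all, it simply cites Theorem 4.1 of Misra, Singh and Harner (2003). So what you have produced is a self-contained proof where the paper offers only a pointer; your route — reduce the stochastic dominance to the concavity of the binomial upper tail viewed as a function of $p^m$, then apply Jensen to $Y=X^m$ — is short, elementary, and independent of the convex-order machinery of Section \ref{improve}, which, as you note, is not needed because here the stronger stochastic order follows directly. This is a perfectly acceptable replacement for the citation.
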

\begin{proof} See \cite{Misra}, Theorem $4.1$.
\end{proof}

Notice that the random variable $\Xi$ is such that $m\cdot\Xi$ has the distribution of a
$\text{Bin}\left(m,\mathbb{E}[X^m]^{1/m}\right)$ 
random variable.
The following result is an analogue of Theorem \ref{binbin} that takes into account the additional information on the 
moments. \\

\begin{thm} 
Fix  positive integers, $n,m \geq 2$. For $i=1,\ldots,n$ let $\{\mu_{i1},\ldots,\mu_{im}\}$ be an  $m$-tuple of 
real numbers such that $1>\mu_{i1}\geq\cdots\geq  \mu_{im}>0$ and for which the class $\mathcal{B}(\mu_{i1},\ldots,\mu_{im})$ 
is non-empty. Let $X_1,\ldots,X_n$ be independent random variables such that $X_i\in \mathcal{B}(\mu_{i1},\ldots,\mu_{im})$,
for $i=1,\ldots,n$. For $j=1,\ldots,m$ 
set $q_j:= \frac{1}{n}\sum_{i=1}^{n}\mathbb{E}\left[X_i^j \right]^{1/j}$.
Fix a \emph{positive integer} $t$  such that $nq_1 < t<n$. For $j=1,\ldots,m-1$ let $I_j$ be the interval $(nq_j+1, nq_{j+1}+1]$
and let $I_m$ be the interval $(nq_m+1, n)$. If $t\in I_j$, for some $j=1,\ldots,m$,
then 
\[ \mathbb{P}\left[\sum_{i=1}^{n}X_i\geq t \right]  \leq \min_{1\leq s\leq j}\;\left\{\frac{(st-s+1)(1-q_s)}{s(st-s+1-nq_s)}\cdot
\mathbb{P}[Bi(ns,q_s)\geq st-s+1]\right\} ,   \]
where $Bi(ns,q_s)\sim\text{Bin}\left(ns, q_s\right)$, for $s=1,\ldots,j$.
\end{thm}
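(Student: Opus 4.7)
The plan is to derive the claimed bound for each fixed $s \in \{1,\ldots,j\}$ separately and then take the minimum over $s$. For a given such $s$, apply Theorem~\ref{momopt} with the integer parameter of the Bernstein approximation set equal to $s$ (when $s=1$ one replaces it by Theorem~\ref{maintwo}, which covers the Bernoulli case not formally stated in Theorem~\ref{momopt}). This yields
\[
\mathbb{P}\left[\sum_{i=1}^n X_i \geq t\right] \leq \inf_{f\in\mathcal{F}_{ic}(t)} \frac{1}{f(t)}\,\mathbb{E}[f(Z^{(s)})],
\]
where $Z^{(s)} = \sum_{i=1}^n Z_i^{(s)}$ and $Z_i^{(s)}$ is the $\mathcal{B}(\mu_{i1},\ldots,\mu_{is})$-Bernstein random variable associated with $X_i$. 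I would then specialise to $f(x)=\max\{0,\, x-(t-1)\}$, which lies in $\mathcal{F}_{ic}(t)$ and satisfies $f(t)=1$, so the upper bound collapses to $\mathbb{E}[f(Z^{(s)})]$.

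Next, invoke Theorem~\ref{stochord} of Misra, Singh and Harner: each $Z_i^{(s)}$ is stochastically dominated by $\Xi_i^{(s)}$, where $s\,\Xi_i^{(s)}\sim\text{Bin}\bigl(s,\mathbb{E}[X_i^s]^{1/s}\bigr)$. Because $f$ is non-decreasing and stochastic order passes through convolutions of independent summands, $\mathbb{E}[f(Z^{(s)})]\leq\mathbb{E}[f(\Xi^{(s)})]$ with $\Xi^{(s)}:=\sum_i \Xi_i^{(s)}$. The random variable $V_s:=s\,\Xi^{(s)}$ is now an ordinary sum of $ns$ independent Bernoulli trials whose parameters average to $q_s$, and it is integer-valued, so
\[
\mathbb{E}[f(\Xi^{(s)})] = \tfrac{1}{s}\,\mathbb{E}[\max\{0,\, V_s-(st-s)\}] = \tfrac{1}{s}\sum_{i\geq st-s+1}\mathbb{P}[V_s\geq i].
\]

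The finishing moves are Hoeffding's Theorem~\ref{lemhof} combined with the binomial tail estimate from Feller used in the proof of Theorem~\ref{binbin}. The Lyapunov-type monotonicity $q_1\leq q_2\leq\cdots\leq q_m$, together with the hypothesis $t\in I_j$, ensures that $t>nq_s+1$ for every $s\leq j$; consequently every $i\geq st-s+1$ strictly exceeds the mean $nsq_s$. Hoeffding's lemma then gives $\mathbb{P}[V_s\geq i]\leq \mathbb{P}[B(ns,q_s)\geq i]$, and Feller's estimate $\mathbb{P}[B(ns,q_s)\geq i]\leq \frac{i(1-q_s)}{i-nsq_s}\,\mathbb{P}[B(ns,q_s)=i]$ applies, its coefficient being decreasing in $i$ on that range. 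Pulling that coefficient out of the sum at $i=st-s+1$ produces the bound claimed for the given $s$, after which the minimum over $s\in\{1,\ldots,j\}$ completes the argument.

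The main source of friction is not conceptual but organisational: one must verify the range constraint $st-s+1>nsq_s$ carefully from the definition of $I_j$, and also check that the Feller factor $\frac{i(1-q_s)}{i-nsq_s}$ is genuinely monotone decreasing in $i$ above the mean so that pulling it out of the summation is legitimate. Everything else---convexity and non-negativity of $f$, transfer of stochastic order through independent sums, and the telescoping identity $\mathbb{E}[\max\{0,W-a\}]=\sum_{i>a}\mathbb{P}[W\geq i]$ for integer-valued $W$---simply recycles tools already assembled in Sections~\ref{improve}--\ref{refinebinbin}.
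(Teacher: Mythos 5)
Your argument is essentially the paper's own proof: the same test function $f(x)=\max\{0,x-t+1\}$ fed into Theorem \ref{momopt} (with the Bernstein parameter set to $s$), the same stochastic domination of each $Z_i^{(s)}$ by a scaled $\text{Bin}\bigl(s,\mathbb{E}[X_i^s]^{1/s}\bigr)$ via Theorem \ref{stochord} pushed through the independent sum, the same rewriting $\mathbb{E}[\max\{0,V_s-(st-s)\}]=\sum_{k\geq st-s+1}\mathbb{P}[V_s\geq k]$, and the same finish via Hoeffding's Theorem \ref{lemhof} and Feller's tail estimate with the coefficient pulled out at $k=st-s+1$. One caveat: applying Feller's estimate correctly to $\text{Bin}(ns,q_s)$ gives the factor $\frac{(st-s+1)(1-q_s)}{s(st-s+1-nsq_s)}$, with $nsq_s$ rather than $nq_s$ in the denominator, so your assertion that this yields exactly the stated bound holds only modulo what appears to be a typo inherited from the paper's own proof (which quotes Feller with $k-nq_s$ in place of $k-nsq_s$); your derivation is the correct one, and the constant it produces is slightly weaker than the printed one.
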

\begin{proof} Note that $\{\mathbb{E}\left[X_i^j\right]^{1/j}\}_{j=1}^{m}$ is an increasing sequence, for all $i=1,\ldots,n$.
Fix $j\in\{1,\ldots,m\}$ and let $s$ be such that $1\leq s\leq j$.
Define the function $f(x)= \max\{0,x-t+1\}, x\geq 0$ and note that $f(\cdot)\in \mathcal{F}_{ic}(t)$.
Since $X_i\in \mathcal{B}(\mu_1,\ldots, \mu_m)\subseteq \mathcal{B}(\mu_1,\ldots, \mu_s)$, for all $i=1\ldots,n$,  
Theorem \ref{momopt} implies that
\[ \mathbb{P}\left[\sum_{i=1}^{n}X_i\geq t \right]  \leq   \mathbb{E}\left[\max\{0, Z_{ns}-t+1\}\right] ,  \]
where $Z_{ns}=\sum_{i=1}^{n}Z_i$ and each $Z_i$ takes that value $\frac{\ell}{s}$, for $\ell=0,1,\ldots,s$, with probability
\[   \mathbb{P}[Z_i=\ell/s] =\binom{s}{\ell} \mathbb{E}\left[X_i^{\ell}(1-X_i)^{s-\ell}\right]. \] 
From Theorem \ref{stochord} we know that each $Z_i$ is stochastically smaller than $\Xi_i$, where 
$\Xi_i$ is such that $s\cdot\Xi_i\sim \text{Bin}\left(s,\mathbb{E}[X_i^s]^{1/s}\right)$, for $i=1,\ldots,n$. Since $f(\cdot)$
is an increasing function, and the stochastic order is closed under convolutions, we conclude that 
\[  \mathbb{E}\left[\max\{0, Z_{ns}-t+1\}\right] \leq  \mathbb{E}\left[\max\{0, \Xi_{ns}-t+1\}\right],  \]
where $\Xi_{ns}= \sum_{i=1}^{n}\Xi_i$ is the independent sum of $\Xi_i$'s.  Now $\Xi_i\sim \frac{1}{s}\cdot B_i$, where
$B_i \sim  \text{Bin}\left(s,\mathbb{E}[X_i^s]^{1/s}\right)$, and so
\[  \mathbb{E}\left[\max\{0, \Xi_{ns}-t+1\}\right] =   \frac{1}{s}\mathbb{E}\left[\max\{0, B_{ns}-st+s\}\right], \]
where $B_{ns}= \sum_{i=1}^{n}B_i$.
Since $t$ is assumed to be an integer, we can write
\begin{eqnarray*} \mathbb{E}\left[\max\{0, B_{ns}-st+s\}\right] &=& \sum_{k=st-s+1}^{sn}(k -st+s)\cdot \mathbb{P}[B_{ns}=k] \\
&=& \sum_{k=st-s+1}^{n} \mathbb{P}[B_{ns}\geq k].
\end{eqnarray*}
 Since $t>nq_s +1$, Hoeffding's Theorem \ref{lemhof} implies that
\[ \mathbb{P}\left[B_{ns}\geq k \right] \leq \mathbb{P}\left[Bi(ns,q_s) \geq k \right], \; \text{for}\; k\geq nsq_s , \]
where $Bi(ns,q_s)\sim \text{Bin}\left(ns, q_s\right)$. Summarising, we have shown 
\[ \mathbb{P}\left[\sum_{i=1}^{n}X_i\geq t \right] \leq \frac{1}{s}\cdot\sum_{k=st-s+1}^{ns} \mathbb{P}[Bi(ns,q_s)\geq k] . \]
Finally, we use the following estimate on binomial tails (see Feller \cite{Feller}, page 151, formula $(3.4)$):
\[ \mathbb{P}[Bi(ns,q_s)\geq k] \leq \frac{k-kq_s}{k-nq_s} \cdot\mathbb{P}[Bi(ns,q_s)\geq k], \; \text{for} \; k>nsq_s , \]
and the result follows.
\end{proof}

In the next section we show that the Bernstein-Hoeffding method can be adapted to the case in which one 
has information on the conditional means of the random variables.

\section{Mixtures}
\label{BernHoeffgame}

\subsection{Convex orders and mixtures}
In this section we will work with independent, bounded random variables for which  we have information 
on their conditional distribution.
Let us be more precise after fixing some notation and definitions. 
Let  $m\geq 2$ be a positive integer and let $0=r_0 < r_1< \cdots < r_{m-1} < r_m =1$ be 
real numbers forming the partition $\{I_j\}_j$ of the interval $[0,1]$; where, for $j=1,\ldots,m-1$, 
we set $I_j$ to be the interval 
$[r_{j-1}, r_j)$ and  $I_m=[r_{m-1}, r_m]$. Now let $\mathcal{B}(p, \{I_j,\mu_j\})$ be the class consisting of all 
random variables $X\in  \mathcal{B}(p)$ for which $\mathbb{E}\left[X| X\in I_j\right]= \mu_j$. Formally, 
\[ \mathcal{B}(p, \{I_j,\mu_j\}) := 
\{X\in \mathcal{B}(p): \mathbb{E}\left[X| X\in I_j\right]= \mu_j, \; \text{for}\; j=1,\ldots,m\} .\]
Finally, let $\mathcal{C}(p,\{I_j, q_j\})$ be the class consisting of all random variables in 
$X\in  \mathcal{B}(p)$ for which $\mathbb{P}\left[X\in I_j\right]= q_j$, i.e., 
\[ \mathcal{C}(p,\{I_j, q_j\}):= \{X\in \mathcal{B}(p): \mathbb{P}\left[X\in I_j\right]= q_j, \; \text{for}\; j=1,\ldots,m\} .\]

Suppose that we have independent, bounded random variables for which we know  whether they belong to 
one of the above classes of random variables. In this section we show how to 
employ the Bernstein-Hoeffding method in order to obtain bounds that take the additional information into account. 
In order to do so, we will need the notion of mixture of random variables.
Recall that a \emph{mixture} of the random variables $\{Y_i\}_{i\in I}$ is defined as a random selection of one of the $Y_i$ 
according to a probability distribution on the index set $I$. The next result is a mixture-analogue of Lemma \ref{coupling}. \\

\begin{lemma}\label{mix} Let $f:[0,1]\rightarrow\mathbb{R}$ be a convex function.  Fix positive integer $m\geq 2$
and real numbers $0=r_0 < r_1< \cdots < r_{m-1} < r_m =1$.
For $j=1,\ldots,m-1$, let $I_j$ be the interval $[r_{j-1}, r_j)$ and let $I_m=[r_{m-1}, r_m]$.
If $X$ is a random variable in $\mathcal{B}(p)$, 
then there exists a random variable $\xi_X$ whose support is the set $\{r_0,r_1,\ldots,r_m\}$ such that 
$\mathbb{E}[\xi_X]=p$ and
$\mathbb{E}[f(X)]\leq \mathbb{E}[f(\xi_X)]$.
\end{lemma}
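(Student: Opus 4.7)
The plan is to apply Lemma \ref{coupling} piece by piece to each interval of the partition, and then let $\xi_X$ be the resulting mixture across the partition. Specifically, for $j=1,\ldots,m$, let $q_j=\mathbb{P}[X\in I_j]$ and, whenever $q_j>0$, let $p_j=\mathbb{E}[X\mid X\in I_j]\in [r_{j-1},r_j]$. Since $f$ restricted to $[r_{j-1},r_j]$ is convex and the conditional distribution of $X$ given $X\in I_j$ has mean $p_j$, Lemma \ref{coupling} (applied with $a=r_{j-1}$, $b=r_j$) furnishes a two-valued random variable $B_j$, supported on $\{r_{j-1},r_j\}$ with $\mathbb{E}[B_j]=p_j$, such that
\[ \mathbb{E}[f(X)\mid X\in I_j] \leq \mathbb{E}[f(B_j)]. \]
For $j$ with $q_j=0$ the choice of $B_j$ is immaterial; take $B_j\equiv r_{j-1}$.

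Next I would define $\xi_X$ as the mixture obtained by first choosing an index $J\in\{1,\ldots,m\}$ with $\mathbb{P}[J=j]=q_j$ and then setting $\xi_X=B_J$ (with $B_J$ independent of $J$). By construction, the support of $\xi_X$ is contained in $\bigcup_j\{r_{j-1},r_j\}=\{r_0,r_1,\ldots,r_m\}$. To verify the mean,
\[ \mathbb{E}[\xi_X]=\sum_{j=1}^{m}q_j\,\mathbb{E}[B_j]=\sum_{j=1}^{m}q_j\,p_j=\sum_{j=1}^{m}\mathbb{P}[X\in I_j]\,\mathbb{E}[X\mid X\in I_j]=\mathbb{E}[X]=p, \]
where any term with $q_j=0$ contributes zero to the sum.

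For the inequality on $\mathbb{E}[f(\cdot)]$ I would compute
\[ \mathbb{E}[f(\xi_X)]=\sum_{j=1}^{m}q_j\,\mathbb{E}[f(B_j)]\geq \sum_{j=1}^{m}q_j\,\mathbb{E}[f(X)\mid X\in I_j]=\mathbb{E}[f(X)], \]
the inequality following from the per-interval bound supplied by Lemma \ref{coupling} and the final equality being the tower property. This completes the construction. There is no genuine obstacle here; the only thing to watch out for is the bookkeeping around intervals of probability zero (which are harmless) and the fact that the partition uses half-open intervals, which does not affect the conditional-expectation argument since Lemma \ref{coupling} only uses the endpoints $r_{j-1},r_j$ of each subinterval.
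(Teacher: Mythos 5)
Your proposal is correct and follows essentially the same route as the paper: condition $X$ on the partition cells, apply Lemma \ref{coupling} on each cell $[r_{j-1},r_j]$ to push the conditional law to the two endpoints, and take the mixture with weights $\mathbb{P}[X\in I_j]$, verifying the mean and the inequality by the tower property. Your explicit handling of cells of probability zero is a harmless extra precaution not spelled out in the paper.
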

\begin{proof} Let $A_j$ be the event $\{X\in I_j\},j=1,\ldots,m$.
Define $X_j$ to be the random variable whose distribution is the conditional distribution of $X$, given $A_j$.
It is easy to see that $X$ is a mixture of $\{X_j\}_j$; $X_j$ is chosen with probability $\mathbb{P}[X\in I_j]$.
Now $X_j \in I_j$ and so Lemma \ref{coupling} implies that $X_j\leq_{cx} T_j$, where $T_j$ is the 
random variable that takes that values $r_{j-1}$ and $r_j$ with probabilities 
$\pi_j:=\frac{r_j-\mathbb{E}[X_j]}{r_j-r_{j-1}}$ and 
$1-\pi_j$ respectively. The required random variable can be obtained by 
letting $\xi_X$ 
take the value $0$ with probability 
$(1-\pi_1) \mathbb{P}[X\in I_1]$, the value $1$ with probability $\pi_m\mathbb{P}[X\in I_m]$
and, for $j=1,\ldots,m-1$, the value $r_j$ with probability 
$\pi_j\mathbb{P}[X\in I_j]+(1-\pi_{j+1})\mathbb{P}[X\in I_{j+1}]$.
\end{proof}

Note that the random variable $\xi_X$ of the previous lemma depends on the conditional 
probabilities $\mathbb{P}\left[X\in I_j\right], j=1,\ldots,m$ as well as on the conditional
means $\mathbb{E}\left[X|X\in I_j\right],j=1\ldots,m$.
So, in case we know the conditional probabilities and the conditional means of the random variables, we can find 
random variables that are larger, in the sense of convex order, than any random variable having the same conditional 
probabilities and means.
Similarly, one can find a random variable that is larger, in the sense of convex order, than any random variable 
fromthe class 
$\mathcal{B}(p, \{I_j,\mu_j\})$, i.e., when we know conditional means.\\

\begin{lemma}\label{mixber} 
Let $\mathcal{B}(p, \{I_j,\mu_j\})$ be the class defined above, corresponding to a given 
partition $\{I_j\}$ of the interval $[0,1]$. 
Then there exists random variable $\xi\in  \mathcal{B}(p)$ that concentrates mass 
on the endpoints of the interval $I_1$ and $I_m$ such that $X\leq_{cx} \xi \leq_{cx} \text{Ber(p)}$, for 
all $X\in  \mathcal{B}(p, \{I_j,\mu_j\})$. $\xi$ depends on the partition $\{I_j\}$ and the conditional means $\{\mu_j\}$.
\end{lemma}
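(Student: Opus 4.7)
The plan is to take $\xi$ to be a two-piece mixture of $T_1$ and $T_m$, where $T_j$ denotes the two-point random variable on $\{r_{j-1},r_j\}$ with mean $\mu_j$ (so that $T_1\in\{0,r_1\}$ and $T_m\in\{r_{m-1},1\}$). Specifically, let $\alpha:=\frac{\mu_m-p}{\mu_m-\mu_1}$ and declare $\xi$ to equal $T_1$ with probability $\alpha$ and $T_m$ with probability $1-\alpha$. Because $\mu_j\in I_j$ forces $\mu_1\leq\mu_2\leq\cdots\leq\mu_m$, and because $p=\sum_j q_j\mu_j$ with $q_j:=\mathbb{P}[X\in I_j]$, one has $\mu_1\leq p\leq\mu_m$ and so $\alpha\in[0,1]$. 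By construction, $\xi$ is supported on the endpoints $\{0,r_1,r_{m-1},1\}$ of $I_1$ and $I_m$, and $\mathbb{E}[\xi]=\alpha\mu_1+(1-\alpha)\mu_m=p$, so $\xi\in\mathcal{B}(p)$; the bound $\xi\leq_{cx}\text{Ber}(p)$ is then immediate from Lemma \ref{coupling} applied to $\xi$ itself.

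To show $X\leq_{cx}\xi$ for every $X\in\mathcal{B}(p,\{I_j,\mu_j\})$, I would fix a convex $f:[0,1]\to\mathbb{R}$ and decompose $X$ as a mixture of its conditional pieces $X_j:=(X\mid X\in I_j)$ with weights $q_j$. Each $X_j$ takes values in $[r_{j-1},r_j]$ with mean $\mu_j$, so Lemma \ref{coupling} yields $X_j\leq_{cx}T_j$, and consequently $\mathbb{E}[f(X)]=\sum_j q_j\,\mathbb{E}[f(X_j)]\leq\sum_j q_j\,\mathbb{E}[f(T_j)]$. The right-hand side still depends on the unknown weights $q_j$, and the heart of the proof is to dominate it uniformly by $\mathbb{E}[f(\xi)]$.

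For this I would introduce the piecewise linear function $\phi:[0,1]\to\mathbb{R}$ that interpolates $f$ at the knots $r_0,r_1,\ldots,r_m$. Since $f$ is convex, the slopes of $\phi$ on consecutive subintervals are non-decreasing, so $\phi$ is itself convex; a one-line calculation gives $\mathbb{E}[f(T_j)]=\phi(\mu_j)$. Letting $L$ be the affine function through $(\mu_1,\phi(\mu_1))$ and $(\mu_m,\phi(\mu_m))$, convexity of $\phi$ gives $\phi(\mu_j)\leq L(\mu_j)$ for every $\mu_j\in[\mu_1,\mu_m]$, and the constraint $\sum_j q_j\mu_j=p$ combined with linearity of $L$ yields
$$\sum_j q_j\,\mathbb{E}[f(T_j)]=\sum_j q_j\,\phi(\mu_j)\leq L(p)=\alpha\,\phi(\mu_1)+(1-\alpha)\phi(\mu_m)=\mathbb{E}[f(\xi)],$$
which completes the argument that $X\leq_{cx}\xi$.

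The only real obstacle is that the distribution of $X$ through the $q_j$'s is not fixed within the class, so a priori no single $\xi$ should dominate every $X$. The convex-interpolation trick above is what resolves this: convexity of $\phi$ forces every chord at $p$ over the points $(\mu_j,\phi(\mu_j))$ to lie below the outermost chord, and this outermost chord is precisely realised by the two-piece mixture of $T_1$ and $T_m$.
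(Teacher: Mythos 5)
Your proof is correct and follows essentially the same route as the paper: the same mixture $\xi$ of $T_1$ and $T_m$ with weight $\frac{\mu_m-p}{\mu_m-\mu_1}$, the same decomposition of $X$ into its conditional pieces $X_j\leq_{cx}T_j$, and the same outer-chord comparison. The only cosmetic difference is that you pass all middle pieces through $\mathbb{E}[f(T_j)]=\phi(\mu_j)$ and use convexity of the piecewise-linear interpolant $\phi$, whereas the paper keeps $\mathbb{E}[f(X_j)]$ for $2\leq j\leq m-1$ and bounds $f$ directly by the line $g$ through $(\mu_1,\mathbb{E}[f(T_1)])$ and $(\mu_m,\mathbb{E}[f(T_m)])$ on $[r_1,r_{m-1}]$; the two arguments are equivalent (your $L$ is the paper's $g$), and your version makes the convexity step slightly more explicit.
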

\begin{proof} Fix  a random variable 
$X\in  \mathcal{B}(p, \{I_j,\mu_j\})$. Let $X_j$  be the random 
variable whose distribution is the conditional distribution of $X$, given  the event $X\in I_j$.
Lemma \ref{mix} implies that there exist random 
variables $T_j,j=1,\ldots,m$, that concentrate mass on the endpoints, $r_{j-1},r_j$, of the intervals $I_j$ such that 
$X_j \leq_{cx} T_j$. Note that, by construction, 
$\mathbb{E}[T_j]= \mathbb{E}\left[X_j\right]= \mu_j$, for all $j=1\ldots,m$. Now let $\xi$ be a mixture 
of the random variables $T_1$ and $T_m$; we take $\xi$
to be equal to $T_1$ with probability $\frac{\mu_m-p}{\mu_m -\mu_1}$ and equal to $T_m$ with 
probability $\frac{p-\mu_1}{\mu_m-\mu_1}$. Note that $\mathbb{E}[\xi]= p$. We now show that $X\leq_{cx}\xi$.
Fix a convex function $f:[0,1]\rightarrow\mathbb{R}$ and
let $g:[0,1]\rightarrow \mathbb{R}$ be the function whose graph is the line passing through the points 
$(\mu_1, \mathbb{E}[f(T_1)])$ and $(\mu_m, \mathbb{E}[f(T_m)])$. 
Since $f$ is convex, we have $f(x)\leq g(x)$, for all $x$ from the interval $[r_1,r_{m-1}]$.
Hence 
\begin{eqnarray*} 
\mathbb{E}\left[f(X)\right] &=& \sum_{j=1}^{n}\mathbb{E}\left[f(X_j)\right]\cdot \mathbb{P}\left[X\in I_j\right]\\
&\leq& p_1 \mathbb{E}\left[f(T_1)\right] + p_m \mathbb{E}\left[f(T_m)\right] + \sum_{j=2}^{m-1}p_j \mathbb{E}\left[f(X_j)\right]\\
&\leq& p_1 g(\mu_1) + p_m g(\mu_m) + \sum_{j=2}^{m-1}p_j \mathbb{E}\left[g(X_j)\right] .
\end{eqnarray*}
Since $g(\cdot)$ is linear, we have $g(\mu_j)= g\big(\mathbb{E}[X_j]\big) = \mathbb{E}\left[g(X_j)\right], j=1,\ldots,m$ and 
so the last expression can be written as 
\[ p_1 g(\mu_1) + p_m g(\mu_m) + \sum_{j=2}^{m-1}p_j \mathbb{E}\left[g(X_j)\right]= \mathbb{E}\left[g(X)\right] . \]
By linearity of $g(\cdot)$, we have $\mathbb{E}\left[g(X)\right] = g(p)= \mathbb{E}\left[ g(\xi)\right]$. 
Summarising, we have shown 
that 
\[ \mathbb{E}\left[f(X)\right] \leq \mathbb{E}\left[ g(\xi)\right]. \]
Once again, linearity of $g(\cdot)$ implies  
\begin{eqnarray*} \mathbb{E}\left[ g(\xi)\right] &=&  \frac{\mu_m-p}{\mu_m -\mu_1} \mathbb{E}\left[g(B_1)\right] + \frac{p-\mu_1}{\mu_m -\mu_1} \mathbb{E}\left[g(B_1)\right] \\
&=& \frac{\mu_m-p}{\mu_m -\mu_1} g(\mu_1) + \frac{p-\mu_1}{\mu_m -\mu_1}  g(\mu_m) \\
&=& \frac{\mu_m-p}{\mu_m -\mu_1}  \mathbb{E}\left[f(T_1)\right]) + \frac{p-\mu_1}{\mu_m -\mu_1}  \mathbb{E}\left[f(T_m)\right]) \\
&=& \mathbb{E}\left[f(\xi) \right] 
\end{eqnarray*}
and the result follows. 
\end{proof}

The following theorem can be regarded as an improvement upon Hoeffding's in the case where one has additional information 
on the conditional means of the random variables. \\

\begin{thm} Let the random variables $X_1,\ldots,X_n$ be independent and such that $0\leq X\leq 1$.
Fix positive integer $m\geq 2$ and real numbers $0=r_0 < r_1< \cdots < r_{m-1} < r_m =1$.
For $j=1,\ldots,m-1$, let $I_j$ be the interval $[r_{j-1}, r_j)$ and let $I_m=[r_{m-1}, r_m]$.
Assume further that for $i=1,\ldots,n$ there is a sequence $\{\mu_{ij}\}_{j=1}^{m}$ 
such that $\mathbb{E}\left[X_i| X_i \in I_j\right] = \mu_{ij}$.
Let $p_i=\mathbb{E}[X_i]$.
If $t$ is such that $np<t<n$, then there exist $\pi_1,\pi_2,\pi_3,\pi_4\in (0,1)$ that add  up to $1$, such that 
\[ \mathbb{P}\left[\sum_{i=1}^{n}X_i \geq t \right] \leq 
\inf_{h>0}\; e^{-ht} \left\{\pi_1 + e^{hr_1}\pi_2 + e^{hr_{m-1}}\pi_3 + e^{hr_m}\pi_4 \right\}^n .\]
\end{thm}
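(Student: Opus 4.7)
The plan is to combine Lemma \ref{mixber} (which replaces each $X_i$ by a $\{r_0,r_1,r_{m-1},r_m\}$-valued random variable that dominates it in the convex order) with the standard Bernstein--Hoeffding argument of Theorem \ref{mainHoeff}, applied to the function $f(x)=e^{hx}$.

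First I would apply Lemma \ref{mixber} separately to each $X_i$: since $X_i \in \mathcal{B}(p_i,\{I_j,\mu_{ij}\})$, the lemma produces a random variable $\xi_i$ whose support lies in $\{0,r_1,r_{m-1},1\}$ with $\mathbb{E}[\xi_i]=p_i$ and $X_i \leq_{cx} \xi_i$. Write the law of $\xi_i$ as
\[
\mathbb{P}[\xi_i=0]=\pi_{i,1},\ \ \mathbb{P}[\xi_i=r_1]=\pi_{i,2},\ \ \mathbb{P}[\xi_i=r_{m-1}]=\pi_{i,3},\ \ \mathbb{P}[\xi_i=1]=\pi_{i,4},
\]
where the $\pi_{i,k}$'s are determined explicitly by the proof of Lemma \ref{mixber} (they depend on the conditional probabilities $\mathbb{P}[X_i \in I_j]$ and the conditional means $\mu_{ij}$, together with the endpoint parameters), and $\pi_{i,1}+\pi_{i,2}+\pi_{i,3}+\pi_{i,4}=1$. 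Now set $\pi_k:=\tfrac{1}{n}\sum_{i=1}^{n}\pi_{i,k}$ for $k=1,2,3,4$; these are the convex weights that will appear in the final bound and they sum to $1$.

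Next, for fixed $h>0$, I would apply Markov's inequality to the convex and increasing function $f(x)=e^{hx}$, and then exploit independence exactly as in the proof of Theorem \ref{mainHoeff}:
\[
\mathbb{P}\!\left[\sum_{i=1}^{n}X_i\geq t\right]\leq e^{-ht}\,\mathbb{E}\!\left[e^{h\sum_i X_i}\right]=e^{-ht}\prod_{i=1}^{n}\mathbb{E}\!\left[e^{hX_i}\right].
\]
Since $e^{hx}$ is convex, the relation $X_i\leq_{cx}\xi_i$ yields $\mathbb{E}[e^{hX_i}]\leq \mathbb{E}[e^{h\xi_i}]$ for each $i$. The arithmetic--geometric means inequality then gives
\[
\prod_{i=1}^{n}\mathbb{E}\!\left[e^{h\xi_i}\right]\leq \left\{\frac{1}{n}\sum_{i=1}^{n}\mathbb{E}\!\left[e^{h\xi_i}\right]\right\}^{n}.
\]

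Finally, since $\mathbb{E}[e^{h\xi_i}]=\pi_{i,1}+e^{hr_1}\pi_{i,2}+e^{hr_{m-1}}\pi_{i,3}+e^{hr_m}\pi_{i,4}$, averaging over $i$ produces exactly $\pi_1+e^{hr_1}\pi_2+e^{hr_{m-1}}\pi_3+e^{hr_m}\pi_4$. Chaining the inequalities and taking infimum over $h>0$ yields the claimed bound. No step is particularly hard; the only thing to be careful about is that the $\pi_{i,k}$ depend on $i$, so the averaging must be done after AM--GM has already separated the $n$-th power, which is precisely the standard trick of the Bernstein--Hoeffding method and requires no further ideas beyond Lemma \ref{mixber}.
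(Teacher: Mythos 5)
Your proposal is correct and follows essentially the same route as the paper: Lemma \ref{mixber} gives $X_i\leq_{cx}\xi_i$ with $\xi_i$ supported on $\{0,r_1,r_{m-1},1\}$, then Markov with $e^{hx}$, independence, the convex-order comparison, and AM--GM yield the bound with the averaged weights $\pi_k$. (Only a cosmetic slip: by Lemma \ref{mixber} the laws of the $\xi_i$ depend on the conditional means and the endpoints alone, not on the conditional probabilities $\mathbb{P}[X_i\in I_j]$, which is why the theorem needs only the $\mu_{ij}$.)
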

\begin{proof} The proof is similar to the proof of Theorem \ref{maintwo} and so we only sketch it.
Lemma \ref{mixber} implies that $X_i \leq_{cx} \xi_i$, for $i=1,\ldots,n$, where each $\xi_i$ 
concentrates mass on the endpoints, $r_0,r_1,r_{m-1},r_m$, of the intervals $I_1$ and $I_m$.  Hence, the 
arithmetic-geometric means inequality implies
\[ \mathbb{P}\left[\sum_{i=1}^{n}X_i \geq t \right] \leq  
e^{-ht} \left\{\frac{1}{n}\sum_{i=1}^{n} \mathbb{E}\left[e^{h\xi_i}\right] \right\}^n.  \]
Set $q_i = \frac{\mu_{im}-p_i}{\mu_{im}-\mu_{i1}}$, $s_i = \frac{r_1-\mu_{i1}}{r_1-r_0}$ and 
$u_i = \frac{r_m-\mu_{im}}{r_m-r_{m-1}}$, for $i=1,\ldots,n$.
Using Lemma \ref{mix}, the result is obtained by setting 
$\pi_1 = \frac{1}{n}\sum_{i=1}^{n}q_i\cdot s_i$, 
$\pi_2 = \frac{1}{n}\sum_{i=1}^{n} q_i\cdot (1-s_i)$, $\pi_3=\frac{1}{n}\sum_{i=1}^{n}(1-q_i)\cdot u_i$ and 
$\pi_4=\frac{1}{n}\sum_{i=1}^{n}(1-q_i)\cdot (1-u_i)$. 
\end{proof}

In case one considers random variables from the class $\mathcal{C}(p,\{I_j, q_j\})$,  the random variable 
that is largest in the convex order is given by  the solution of a linear program. 
In particular, we have the following. \\

\begin{lemma} Fix a convex and increasing function $f:[0,\infty)\rightarrow [0,\infty)$.
Fix a positive integer $m\geq 2$ and real numbers $0=r_0 < r_1< \cdots < r_{m-1} < r_m =1$.
For $j=1,\ldots,m-1$, let $I_j$ be the interval $[r_{j-1}, r_j)$ and let $I_m=[r_{m-1}, r_m]$.
Assume further that there is a sequence $\{q_{j}\}_{j=1}^{m}$ 
such that the class $\mathcal{C}(p,\{I_j, q_j\})$ is non-empty. 
Then there is a $\xi \in \mathcal{B}(p)$ such that $\xi\leq_{cx} \text{Ber}(p)$ and
\[ \mathbb{E}\left[e^{hX}\right] \leq \mathbb{E}\left[e^{h\xi}\right], \; \text{for all} \; X\in  \mathcal{C}(p,\{I_j, q_j\}) ,\]
where $h$  is such that $e^h=\frac{t(1-p)}{p(n-t)}$, i.e., it is the optimal real such that 
\[ \frac{1}{e^{ht}}\mathbb{E}[e^{hB}] = \inf_{s>0}\; \frac{1}{e^{st}} \mathbb{E}[e^{sB}], \]
with $B\sim \text{Bin}(n,p)$.
The random variable $\xi$ depends on the solution of a linear program. 
\end{lemma}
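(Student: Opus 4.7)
The plan is to reduce the problem to a linear program whose objective and constraint are linear in finitely many variables, then observe that its optimum is attained and yields the desired $\xi$. First I would fix an arbitrary $X \in \mathcal{C}(p,\{I_j,q_j\})$ and, as in the proof of Lemma \ref{mix}, write $X$ as a mixture of its conditional distributions $X_j := (X \mid X \in I_j)$, chosen with probability $q_j$. Since $X_j$ takes values in the interval $[r_{j-1}, r_j]$, the coupling lemma (Lemma \ref{coupling}) applied conditionally on $\{X \in I_j\}$ allows us to replace $X_j$ by the two-point random variable $T_j$ supported on $\{r_{j-1}, r_j\}$ with the same conditional mean $\mu_j := \mathbb{E}[X_j]$, without decreasing the expectation of any convex function. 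In particular, if we set $\alpha_j := \frac{\mu_j - r_{j-1}}{r_j - r_{j-1}}$, then
\[
\mathbb{E}[e^{hX}] \;\leq\; \sum_{j=1}^{m} q_j \bigl[(1-\alpha_j)\,e^{hr_{j-1}} + \alpha_j\,e^{hr_j}\bigr],
\]
while the constraint $\mathbb{E}[X] = p$ becomes $\sum_j q_j [(1-\alpha_j) r_{j-1} + \alpha_j r_j] = p$.

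Next I would view the right-hand side as a linear function of the vector $(\alpha_1,\ldots,\alpha_m) \in [0,1]^m$, subject to the single linear equality constraint above. The non-emptiness of $\mathcal{C}(p,\{I_j,q_j\})$ guarantees that the feasible region is non-empty, and since it is a compact polytope, a maximizer $(\alpha_1^*,\ldots,\alpha_m^*)$ exists. Define $\xi$ to be the random variable supported on $\{r_0,r_1,\ldots,r_m\}$ such that, for each $j$, the mass $q_j (1-\alpha_j^*)$ is assigned to $r_{j-1}$ and $q_j\alpha_j^*$ to $r_j$ (summing contributions at each shared endpoint). By construction $\mathbb{E}[\xi] = p$, so $\xi \in \mathcal{B}(p)$, and the bound above shows $\mathbb{E}[e^{hX}] \leq \mathbb{E}[e^{h\xi}]$ for every $X \in \mathcal{C}(p,\{I_j,q_j\})$ because the same LP upper bound was derived uniformly.

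Finally, to obtain $\xi \leq_{cx} \text{Ber}(p)$ I would simply apply Lemma \ref{coupling} once more, with $[a,b] = [0,1]$, to the $[0,1]$-valued random variable $\xi$ of mean $p$: this yields $\mathbb{E}[f(\xi)] \leq \mathbb{E}[f(\text{Ber}(p))]$ for every convex $f:[0,1]\to\mathbb{R}$, which is the definition of $\xi \leq_{cx} \text{Ber}(p)$. I expect the main obstacle to be purely bookkeeping, namely the re-assembly of the probabilities contributed to each shared endpoint $r_j$ by the two neighboring intervals $I_j$ and $I_{j+1}$, and verifying that the resulting distribution has total mass one and mean $p$. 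The linearity in the variables $\alpha_j$ is exactly the reason why the $\xi$ is described as ``depending on the solution of a linear program,'' and once the LP is set up correctly the rest of the argument is essentially the same as in Lemma \ref{mix}.
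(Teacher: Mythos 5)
Your proof is correct and follows essentially the same route as the paper: decompose $X$ into its conditional distributions on the $I_j$, push each piece to the endpoints via the coupling lemma, observe that the resulting bound on $\mathbb{E}[e^{hX}]$ is linear in the conditional means (your $\alpha_j$ are just an affine reparametrization of the paper's $\mu_j$), and take $\xi$ from the maximizer of the resulting linear program, with $\xi\leq_{cx}\text{Ber}(p)$ following from Lemma \ref{coupling} since $\mathbb{E}[\xi]=p$. Your explicit remark that the feasible polytope is compact and non-empty (so the maximizer exists) is a small point the paper leaves implicit.
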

\begin{proof} Since $\mathbb{E}\left[\xi\right] =p$, the first statement is evident by Lemma \ref{coupling}.
Let $X\in \mathcal{C}(p,\{I_j, q_j\})$ and set $\mu_j = \mathbb{E}\left[X|X\in I_j\right]$
From Lemma \ref{mix} we know that, for $i=1,\ldots,n$, there is a random variable $\xi_X$ that 
concentrates mass on the set $\{r_0,\ldots,r_m\}$ such that $\xi_X$ 
takes the value $r_0=0$ with probability $\pi_0:=q_1 \frac{r_1-\mu_1}{r_1}$, the value $r_m=1$ with probability 
$\pi_m:=q_m \frac{\mu_m-r_{m-1}}{r_m-r_{m-1}}$
and, for $j=1,\ldots,m-1$, the value $r_j$ with 
probability $\pi_j:=q_j \frac{\mu_j-r_{j-1}}{r_j-r_{j-1}}+q_{j+1}\frac{r_{i+1}-\mu_{i+1}}{r_{i+1}-r_i}$.
Therefore, 
\[ \mathbb{E}\left[e^{h\xi_X}\right] = \pi_0 e^{hr_0} + \pi_me^{hr_m}  + \sum_{j=1}^{m-1} \pi_j e^{hr_j} ,\]
which implies that $\mathbb{E}\left[e^{h\xi_X}\right]$ is a linear function of $\{\mu_j\}$. 
The required $\xi$ is obtained by maximising $\mathbb{E}\left[e^{h\xi_X}\right]$ subject to 
the following linear constraints: $r_{j-1} \leq \mu_j \leq \mu_j$, for $j=1,\ldots,m$ and $\sum_{j=1}^{m}q_i\mu_i = p$.
\end{proof}

\subsection{Yet another bound for cases with known variance}

Let us, for convenience,  change a bit our notation and 
set $\mathcal{B}(p,\sigma^2)$ to be the class of random variables from $\mathcal{B}(p)$ 
whose variance is $\sigma^2$. Throughout this section we will assume that $\sigma^2$ is strictly positive.
Hence $0<\sigma^2 \leq p(1-p)$. 
From Proposition \ref{impossible} we know that there does \emph{not} exist $\xi\in \mathcal{B}(p,\sigma^2)$ such that 
$X\leq_{cx} \xi$, for all $X\in \mathcal{B}(p,\sigma^2)$. From Lemma \ref{coupling} we know that 
$X\leq_{cx} \text{Ber}(p)$, for all $X\in \mathcal{B}(p,\sigma^2)$ but $\text{Ber}(p)$ does \emph{not} belong to the class 
$\mathcal{B}(p,\sigma^2)$, when $\sigma^2<p(1-p)$. In Theorem \ref{momopt} we have obtained, 
using  Bernstein polynomials, a random variable
$Z_{p,\sigma}\in \mathcal{B}(p)$, that does \emph{not} belong to $\mathcal{B}(p,\sigma^2)$, 
such that $X\leq_{cx} Z_{p,\sigma} \leq_{cx}  \text{Ber}(p)$. In this section we will 
construct another random variable having this property. More precisely, we will prove the following.  \\

\begin{lemma}\label{xirandom} 
 There exists a random variable $\xi_{p,\sigma}\in \mathcal{B}(p)$ such that 
\[ X\leq_{cx} \xi_{p,\sigma} \leq_{cx}  \text{Ber}(p) , \]
for all $X\in \mathcal{B}(p,\sigma^2)$. 
\end{lemma}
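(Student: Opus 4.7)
The plan is to construct $\xi_{p,\sigma}$ directly from the Chebyshev–Markov envelope of $\mathcal{B}(p,\sigma^2)$. Define
\[
g(t) := \sup_{X \in \mathcal{B}(p,\sigma^2)} \mathbb{E}[(X-t)^+], \qquad t\in\mathbb{R}.
\]
First I would record the properties of $g$ I shall need: it is non-increasing and convex (a pointwise supremum of convex functions of $t$); it satisfies $g(0)=p$, $g(t)=p-t$ for $t\leq 0$, and $g(t)=0$ for $t\geq 1$; and its (right) derivative satisfies $-g'(t)\in[0,1]$ everywhere, because for the optimizer $X_t^{\ast}$ at $t$ one has $-\tfrac{d}{dt}\mathbb{E}[(X_t^{\ast}-t)^+]=\mathbb{P}[X_t^{\ast}>t]\in[0,1]$.

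Next I would take $\xi_{p,\sigma}$ to be the random variable supported in $[0,1]$ with survival function $\mathbb{P}[\xi_{p,\sigma}>t]=-g'(t)$. Monotonicity of $-g'$ (from convexity of $g$) and the bound $-g'\leq 1$ make this a bona fide distribution, and the computation $\mathbb{E}[\xi_{p,\sigma}]=\int_0^1 (-g'(t))\,dt=g(0)-g(1)=p$ places it in $\mathcal{B}(p)$. The inequality $\xi_{p,\sigma}\leq_{cx}\mathrm{Ber}(p)$ is then immediate from Lemma \ref{coupling}. For the other inequality, both sides have mean $p$, and for every $t$
\[
\mathbb{E}[(\xi_{p,\sigma}-t)^+]=\int_t^1 (-g'(s))\,ds=g(t)\geq\mathbb{E}[(X-t)^+]
\]
holds by the very definition of $g$, so the standard characterization of convex order yields $X\leq_{cx}\xi_{p,\sigma}$ for every $X\in\mathcal{B}(p,\sigma^2)$.

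The main obstacle is to verify the regularity of $g$ needed to interpret $-g'$ as a survival function, namely that $g$ is absolutely continuous with non-increasing right derivative in $[0,1]$. This follows from standard facts about convex functions together with an envelope argument, but it can also be done by hand by solving the underlying (classical) moment problem. That computation yields the piecewise formula $g(t)=p-tp^2/(p^2+\sigma^2)$ on $[0,(p^2+\sigma^2)/(2p)]$, $g(t)=(p-t+\sqrt{(t-p)^2+\sigma^2})/2$ on the middle interval (where the two-point $\{t\pm\sqrt{(t-p)^2+\sigma^2}\}$ is admissible), and $g(t)=(1-t)\sigma^2/((1-p)^2+\sigma^2)$ near $1$. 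Reading off $-g'$ one finds that $\xi_{p,\sigma}$ has atoms $\sigma^2/(p^2+\sigma^2)$ at $0$ and $\sigma^2/((1-p)^2+\sigma^2)$ at $1$, together with a continuous density proportional to $((t-p)^2+\sigma^2)^{-3/2}$ in between — in particular, $\xi_{p,\sigma}$ is not finitely supported, which is consistent with the paper's remark that it differs from the Bernstein random variables of Theorem \ref{moments}.
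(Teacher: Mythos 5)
Your proof is correct, but it is a genuinely different construction from the paper's. The paper proceeds through Lemma \ref{mix} applied to the two-interval partition $[0,p)\cup[p,1]$: each $X\in\mathcal{B}(p,\sigma^2)$ is dominated by a three-point variable $\xi_X$ on $\{0,p,1\}$, Lemma \ref{nice} reduces the comparison of such variables to comparing $\mathbb{P}[\xi_X=p]$, and an explicit optimization (minimize $\ell_1+\ell_2$ subject to $\ell_1\ell_2=\sigma^2$ and box constraints) identifies the extremal three-point law, which is written out case by case. You instead take the one-sided Chebyshev envelope $g(t)=\sup_X\mathbb{E}[(X-t)^+]$, observe that it is convex, non-increasing, $1$-Lipschitz (each member function has slopes in $[-1,0]$, so the envelope-derivative heuristic with the optimizer $X_t^\ast$ is not even needed), with $g(t)=p-t$ for $t\le 0$ and $g=0$ on $[1,\infty)$, and read off a law on $[0,1]$ with survival function $-g'_+$; the stop-loss characterization of $\leq_{cx}$ then gives $X\leq_{cx}\xi_{p,\sigma}$ for free, and Lemma \ref{coupling} gives $\xi_{p,\sigma}\leq_{cx}\mathrm{Ber}(p)$. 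This is a clean and in fact stronger argument in one respect: your $\xi_{p,\sigma}$ has the pointwise smallest possible stop-loss transform, so it is minimal (in convex order) among all mean-$p$ dominators of the class, hence yields bounds at least as sharp as the paper's three-point variable; the explicit three-piece formula for $g$ is a nice sanity check but is not needed for the proof. The trade-off is structural: the paper's $\xi_{p,\sigma}$ is supported on $\{0,p,1\}$, and it is precisely this finite lattice support that the proof of Theorem \ref{xitheorem} exploits (the claim that $\sum_i\xi_{p_i,\sigma_i}$ takes values in $\{0,p,2p,\ldots,np\}$ and the resulting finite search over breakpoints $\varepsilon$), whereas your variable has atoms at $0$ and $1$ plus a continuous component, so it proves the lemma as stated but would not directly support the downstream computability argument without modification.
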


Depending on the value of $p$ and $\sigma^2$, the random variable $\xi_{p,\sigma}$
can yield efficiently computable bounds that are sharper than existing, well-known, bounds.
After stating our main results, we will  
provide some figures that illustrate the differences between the bounds.  
In order to construct $\xi_{p,\sigma}$ we will apply Lemma \ref{mix} to the partition 
$[0,p)\cup [p,1]$. We will also need the following result that is interesting on its own. \\ 

\begin{lemma}\label{nice} Suppose that $X,Y$ are two random variables from the class $\mathcal{B}(p,\sigma^2)$ and 
consider the partition  $[0,p)\cup [p,1]$ of $[0,1]$.
Let $\xi_X$ and $\xi_Y$ be the associated random variables given by Lemma \ref{mix}. Then 
\[ \xi_X \leq_{cx} \xi_Y \quad \text{if and only if} \quad \mathbb{P}[\xi_X =p] \geq \mathbb{P}[\xi_Y =p] . \]
\end{lemma}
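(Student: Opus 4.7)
The plan is to exploit the fact that $\xi_X$ and $\xi_Y$ are both supported on the three-point set $\{0,p,1\}$ with common mean $p$, which turns the convex-order comparison into a scalar comparison once we parametrise these distributions correctly.

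First I would observe that, because $\mathbb{E}[\xi_X]=p$ and $\xi_X$ is supported on $\{0,p,1\}$, the full law of $\xi_X$ is determined by the single quantity $b_X := \mathbb{P}[\xi_X = p]$. Writing $a_X = \mathbb{P}[\xi_X = 0]$, $c_X = \mathbb{P}[\xi_X = 1]$ and solving $a_X + b_X + c_X = 1$ together with $b_X p + c_X = p$ gives
\[
\mathbb{P}[\xi_X = 0] = (1-p)(1-b_X), \qquad \mathbb{P}[\xi_X = 1] = p(1-b_X).
\]
Equivalently, $\xi_X$ is the two-component mixture
\[
\xi_X \;\sim\; b_X \cdot \delta_p \;+\; (1-b_X)\cdot \mathrm{Ber}(p),
\]
and the analogous statement holds for $\xi_Y$ with parameter $b_Y$. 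Note that this step does not use the variance constraint; the class $\mathcal{B}(p,\sigma^2)$ only enters implicitly through $b_X, b_Y$.

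Next, for any convex $f:[0,1]\to\mathbb{R}$, the mixture representation immediately gives
\[
\mathbb{E}[f(\xi_X)] \;=\; b_X\, f(p) \;+\; (1-b_X)\bigl[(1-p)f(0) + p f(1)\bigr].
\]
Since $p = (1-p)\cdot 0 + p\cdot 1$, Jensen's inequality yields $f(p) \leq (1-p)f(0) + pf(1)$, so the right-hand side is a non-increasing affine function of $b_X$. Therefore $b_X \geq b_Y$ implies $\mathbb{E}[f(\xi_X)] \leq \mathbb{E}[f(\xi_Y)]$ for every convex $f$, which is the ``if'' direction $\xi_X \leq_{cx} \xi_Y$.

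For the converse I would simply test with the strictly convex function $f(x) = x^2$. A direct computation using the mixture representation gives $\mathbb{E}[\xi_X^2] = p - b_X\, p(1-p)$, which is strictly decreasing in $b_X$. Hence $\xi_X \leq_{cx} \xi_Y$ forces $\mathbb{E}[\xi_X^2] \leq \mathbb{E}[\xi_Y^2]$, and comparing the two expressions yields $b_X \geq b_Y$. There is essentially no obstacle here: the whole argument rests on the one-parameter structure exposed in the first paragraph, and that in turn is a direct consequence of the explicit form of $\xi_X$ provided by Lemma \ref{mix} together with the mean constraint $\mathbb{E}[\xi_X]=p$.
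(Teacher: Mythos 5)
Your proposal is correct and takes essentially the same route as the paper: both arguments rest on the three\nobreakdash-point support $\{0,p,1\}$, the common mean $p$, and the Jensen bound $f(p)\leq (1-p)f(0)+pf(1)$. Your explicit one-parameter mixture representation $\xi_X \sim b_X\,\delta_p + (1-b_X)\,\mathrm{Ber}(p)$ is a cleaner packaging of the paper's direct estimate for the ``if'' direction, and testing with $f(x)=x^2$ plays exactly the role of the paper's piecewise-linear function with $f(0)=f(1)=1$, $f(p)=0$ in the ``only if'' direction.
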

\begin{proof} Assume first that $\xi_X \leq_{cx} \xi_Y$. Then $\mathbb{E}\big[f(\xi_X)\big] \leq \mathbb{E}\big[f(\xi_Y)\big]$ 
for the function $f:[0,1]\rightarrow [0,\infty)$ having values $f(0)=f(1)=1, f(p)=0$ and which is  linear on the intervals 
$[0,p]$ and $[p,1]$.
Hence 
\[ \mathbb{E}\big[f(\xi_X)\big] = 1- \mathbb{P}[\xi_X =p] \leq \mathbb{E}\big[f(\xi_Y)\big] = 1-  \mathbb{P}[\xi_Y =p] \]
and so $\mathbb{P}[\xi_X =p] \geq \mathbb{P}[\xi_Y =p]$. \\
Assume now that $\mathbb{P}[\xi_X =p] \geq \mathbb{P}[\xi_Y =p]$. Then for any convex function $f:[0,1]\rightarrow [0,\infty)$, 
we have $f(p)\leq (1-p)\cdot f(0)+p\cdot f(1)$ and so
\begin{eqnarray*} \mathbb{E}\big[f(\xi_X)\big]  -\mathbb{E}\big[f(\xi_Y)\big] &\leq& f(0)\cdot \left(\mathbb{P}[\xi_X =0]-\mathbb{P}[\xi_Y =0]\right) \\
&+& f(1)\cdot \left(\mathbb{P}[\xi_X =1]-\mathbb{P}[\xi_Y =1]\right) \\
&+& ((1-p)\cdot f(0)+p\cdot f(1))\cdot \left(\mathbb{P}[\xi_X =p]-\mathbb{P}[\xi_Y =p]\right) \\
&=& f(0)\cdot \left(\mathbb{E}\big[1-\xi_X\big] -\mathbb{E}\big[1-\xi_Y\big]  \right) + f(1)\cdot \left(\mathbb{E}\big[\xi_X\big] -\mathbb{E}\big[\xi_Y\big]  \right) \\
&=& 0 ,
\end{eqnarray*}
where the last equality comes form the fact that $\mathbb{E}\big[\xi_X\big] =\mathbb{E}\big[\xi_Y\big]$. 
\end{proof}

\begin{proof}[Proof of Lemma \ref{xirandom}] 
Consider the class $\mathcal{B}(p,\sigma^2)$ with $0<\sigma^2\leq p(1-p)$.
For every $X\in \mathcal{B}(p,\sigma^2)$ let $X_1$ be the random variable whose distribution 
is the conditional distribution of $X$, given 
that $X\in [0,p)$.  Let also $X_2$ be the random variable whose distribution 
is the conditional distribution of $X$, given $X\in [p,1]$.
From Lemma \ref{mix} we know that there is a random variable $\xi_X$ such that $\mathbb{E}[X]=\mathbb{E}[\xi_X]$ and 
$X\leq_{cx} \xi_X$. Furthermore, $\xi_X$ is the mixture of random variables $B_1$ and $B_2$ such that 
$\mathbb{E}[X_i]=\mathbb{E}[B_i]$ and $X_i\leq_{cx} B_i$, for $i=1,2$. 
In addition, $B_1$ concentrates mass on the set $\{0,p\}$ and $B_2$ concentrates mass on the set $\{p,1\}$.
Assume that $\xi_X$ is equal to $B_1$ with probability $\theta_X$.
Clearly, $\xi_X$ depends on $X$ and we now show how one can get rid of this dependence. 
Define $\xi_{p,\sigma}$ to be the random variable $\xi_{X}, X\in \mathcal{B}(p,\sigma^2)$ for which 
\[ \mathbb{P}\big[\xi_X=p\big] = \min_{Y\in \mathcal{B}(p,\sigma^2)}\mathbb{P}\big[\xi_Y=p\big] . \]
From Lemma \ref{nice} we have $Y\leq_{cx} \xi_{p,\sigma}$, for all $Y\in \mathcal{B}(p,\sigma^2)$.
Set $\ell_1 = p- \mathbb{E}[X_1]$ and $\ell_2 = \mathbb{E}[X_2]-p$. 
Off course, $\ell_1,\ell_2$ depend on $X$. Since 
$p=\theta_X \mathbb{E}[X_1]+ (1-\theta_X)\mathbb{E}[X_2]$, we can write
\[ \theta_X \ell_1 = (1-\theta_X)\ell_2 \; \Leftrightarrow\; \theta_X = \frac{\ell_2}{\ell_1+\ell_2} . \]
The cases in which $\ell_1$ and $\ell_2$ are both equal to zero can be excluded since they correspond 
to a constant random variable.
We now proceed to find the distribution of $\xi_{p,\sigma}= \xi_X$.  
Using Lemma \ref{mix} we compute
\begin{eqnarray*} \mathbb{P}\left[\xi_X=p\right] &=& \theta_X \mathbb{P}[B_1 =p] + (1-\theta_X)\mathbb{P}[B_2=p] \\
&=& \frac{\ell_2}{\ell_1+\ell_2} \left(1- \frac{\ell_1}{p}\right) + \frac{\ell_1}{\ell_1+\ell_2} \left(1-\frac{\ell_2}{1-p} \right) \\
&=& 1-\frac{\ell_1\ell_2}{(1-p)p(\ell_1 + \ell_2)} \\
&=& 1 - \frac{1}{(1-p)p(1/\ell_1 + 1/\ell_2)}.
\end{eqnarray*}
The last expression implies that $\mathbb{P}\left[\xi_X=p\right]$ is a decreasing function of $\ell_1$ and of $\ell_2$. 
Similarly, one can check that 
\[  \mathbb{P}\left[\xi_X=0\right] = \frac{\ell_1\ell_2}{p(\ell_1+\ell_2)} \quad \text{and}\quad  \mathbb{P}\left[\xi_X=1\right] = \frac{\ell_1\ell_2}{(1-p)(\ell_1+\ell_2)} .\]
By the Law of total variance we have 
\[ \sigma^2 = \text{Var}[X]= \theta_X \text{Var}[X_1] + (1-\theta_X)\text{Var}[X_2]+ \theta_X \ell_1^2 + (1-\theta_X)\ell_2^2. \]
Hence $ \sigma^2\geq  \theta_X \ell_1^2 + (1-\theta_X)\ell_2^2$ or, equivalently, $\sigma^2\geq \ell_1\ell_2$. 
Since $\mathbb{P}\left[\xi_X=p\right]$ is a decreasing function of $\ell_1$ and of $\ell_2$ it follows that it attains its minimum 
when $\sigma^2 = \ell_1\ell_2$ and this, in turn, implies that 
\[ \mathbb{P}\left[\xi_X=p\right] = 1-\frac{\sigma^2}{(1-p)p(\ell_1 + \ell_2)} . \]
Therefore, in order to minimise $\mathbb{P}\left[\xi_X=p\right]$ it is enough to  solve the following  
optimization problem:
\begin{align*}
 \min_{\ell_1,\ell_2}&\quad \ell_1+\ell_2\\
 \hbox{s.t.}&\quad \ell_1\ell_2=\sigma^2\\
 &\quad 0<\ell_1\leq p\\
 &\quad 0<\ell_2\leq 1-p.\\
\end{align*}
Elementary, though quite tedious, calculations show that the optimal solution $(\ell_{1}^{\ast},\ell_{2}^{\ast})$ equals
$$(\ell_{1}^{\ast},\ell_{2}^{\ast}) = \begin{cases}
   (\frac{\sigma^2}{1-p},1-p),&\qquad \text{if}\; \sigma>1-p\\
   (p,\frac{\sigma^2}{p}),& \qquad \text{if}\;\sigma>p\\
  (\sigma,\sigma), &\qquad \text{if}\; \sigma\leq \min\{p,1-p\}.\end{cases}
$$
Therefore, the required random variable $\xi_{p,\sigma}$ has the following distribution:
\begin{itemize}
 \item If $\sigma>1-p$, then $\xi_{p,\sigma}$ takes the values $0,p$ and $1$ with probability
$\frac{\sigma^2 - p \sigma^2}{p ((1 - p)^2 + \sigma^2)}$, $\frac{(1-p) ((1-p) p-\sigma^2)}{p ((p-1)^2+\sigma^2)}$ and 
$\frac{\sigma^2}{ (1 - p)^2 + \sigma^2}$ respectively.
  \item If $\sigma>p$, then $\xi_{p,\sigma}$ takes the values $0,p$ and $1$ with probability
$\frac{\sigma^2 }{p^2 + \sigma^2}$, $\frac{p ((1-p) p-\sigma^2)}{(1-p) (p^2+\sigma^2)}$ and 
$\frac{p\sigma^2}{ (1 - p)(p^2 + \sigma^2)}$, respectively. 
\item If $\sigma \leq\min\{p,1-p\}$, then $\xi_{p,\sigma}$ takes the values $0,p$ and $1$ with probability
$\frac{\sigma}{2p}$, $1-\frac{\sigma}{2(1-p)p}$ and
$\frac{\sigma}{ 2 -2 p}$, respectively. 
\end{itemize}
\end{proof}

\begin{proof}[Proof of Theorem \ref{xitheorem}]
The proof of Theorem \ref{xitheorem}  is an  application of
Lemma \ref{xirandom}. It is very similar to the proof of Theorem 
\ref{maintwo} and Theorem \ref{Yuyipr} and so we briefly sketch it. 
The first statement can be proven in the same way as Theorem \ref{maintwo}. 
The second statement follows from the fact that
$\sum_i \xi_{p_i,\sigma_i}\in \{0,p,2p,\ldots,np\}$ and by looking at the smallest 
positive integer, $m_t$, that is $>t$. As in Theorem \ref{Yuyipr}, we can find an $\varepsilon>0$ such that 
the function, $\phi(\cdot)$, that is equal to $0$ for $x<\varepsilon$ and, for $x\geq \varepsilon$, it is a straight 
line passing through the points $(t,\phi(t))$ and $(m_t,\phi(m_t))$ satisfies $\phi(t)=1$ and
\[\mathbb{E}\left[\phi\left(\sum_i \xi_{p_i,\sigma_i}\right)\right]\leq 
\mathbb{E}\left[f\left(\sum_i \xi_{p_i,\sigma_i}\right)\right], \]
for a supposedly optimal function $f(\cdot)$ with $f(t)=1$.
\end{proof}

It is not easy to find a closed form of the bound given by Theorem \ref{xitheorem}. Nevertheless, the 
bound can be easilly implemented. Note that the previous bound concerns functions from the class $\mathcal{F}_{ic}(t)$.
We end this section by performing some pictorial  comparisons between several bounds discussed in this article. 
Before doing so, let us bring to the reader's attention the following, well-known, bound that is due to Bennett \cite{Bennett}.
Bennett's approach was simplified by Cohen et al. \cite{Cohen}. In particular, by 
employing the Bernstein-Hoeffding method to the exponential function, Cohen et al. have shown the  
following. \\

\begin{thm}[Bennett bound]
Fix positive integer $n$ and assume that we are given a pair $(p,\sigma^2)$ for which
the class $\mathcal{B}(p,\sigma^2)$ is non-empty. Let $X_1,\ldots,X_n$ be independent random variables 
such that $X_i\in  \mathcal{B}(p,\sigma^2)$, for $i=1,\ldots,n$. Fix $t\in(np,n)$. Then 
\[ \mathbb{P}\left[\sum_{i=1}^{n}X_i\geq t\right] \leq \left\{\left(\frac{\alpha}{\beta}\right)^{\beta} 
\left( \frac{1-\alpha}{1-\beta}\right)^{1-\beta} \right\}^n,  \]
where $\alpha =\frac{\sigma^2}{\sigma^2+(1-p)^2}$ and $\beta =\frac{\sigma^2 + (t/n -p)(1-p)}{\sigma^2+(1-p)^2}$.
\end{thm}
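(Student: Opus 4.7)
The plan is to apply the Bernstein--Hoeffding method with the exponential function $f(x)=e^{hx}$, dominate each moment generating function $\mathbb{E}[e^{hX_i}]$ by that of the extremal two--point random variable $C$ from Lemma \ref{cohhen}, and then reduce the optimisation over $h$ to the standard one--dimensional Chernoff bound for a Bernoulli random variable.

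First I would apply Markov's inequality and independence: for any $h>0$,
\[ \mathbb{P}\left[\sum_{i=1}^{n}X_i\geq t\right] \leq e^{-ht}\prod_{i=1}^{n}\mathbb{E}[e^{hX_i}]. \]
Since $e^{hx}=\sum_{k\geq 0}(hx)^{k}/k!$ has non--negative coefficients for $h>0$, Lemma \ref{cohhen} applied term by term gives $\mathbb{E}[e^{hX_i}]\leq \mathbb{E}[e^{hC}]$, where $C$ takes the value $\lambda:=p-\sigma^2/(1-p)$ with probability $(1-p)/(1-\lambda)$ and the value $1$ with probability $(p-\lambda)/(1-\lambda)$. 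Since all $X_i$ have the same first two moments, the same $C$ dominates all of them, and hence
\[ \mathbb{P}\left[\sum_{i=1}^{n}X_i\geq t\right] \leq \bigl\{e^{-ht/n}\mathbb{E}[e^{hC}]\bigr\}^n. \]

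Next I would reparametrise so that the bracket becomes the moment generating function of a Bernoulli. A short computation using $1-\lambda=((1-p)^2+\sigma^2)/(1-p)$ shows $(p-\lambda)/(1-\lambda)=\alpha$, so $C=\lambda+(1-\lambda)B$ with $B\sim\text{Ber}(\alpha)$. Therefore $\mathbb{E}[e^{hC}]=e^{h\lambda}\bigl((1-\alpha)+\alpha e^{h(1-\lambda)}\bigr)$. Setting $k:=h(1-\lambda)$ and $\tau:=(t/n-\lambda)/(1-\lambda)$, the bound collapses to
\[ \mathbb{P}\left[\sum_{i=1}^{n}X_i\geq t\right] \leq \Bigl\{e^{-k\tau}\bigl((1-\alpha)+\alpha e^{k}\bigr)\Bigr\}^n. \]

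Finally I would carry out the one--variable Chernoff optimisation. Setting the logarithmic derivative in $k$ to zero yields $e^{k^\ast}=\tau(1-\alpha)/((1-\tau)\alpha)$, and substituting back gives
\[ \inf_{k>0}e^{-k\tau}\bigl((1-\alpha)+\alpha e^{k}\bigr) = \left(\frac{\alpha}{\tau}\right)^{\tau}\left(\frac{1-\alpha}{1-\tau}\right)^{1-\tau}. \]
It remains to check $\tau=\beta$: plugging $\lambda=p-\sigma^2/(1-p)$ into the formula for $\tau$ and clearing the $(1-p)$ factors gives
\[ \tau=\frac{(1-p)(t/n-p)+\sigma^2}{(1-p)^2+\sigma^2}=\beta. \]
The hypothesis $t>np$ yields $\beta>\alpha$, ensuring $k^\ast>0$ is admissible in the original domain $h>0$. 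There is no deep obstacle; the main effort is the algebraic bookkeeping that identifies the reparametrisation constants with the $\alpha$ and $\beta$ given in the statement, and the verification that the Taylor series argument in the first step legitimately passes the moment inequality of Lemma \ref{cohhen} through to the exponential moments.
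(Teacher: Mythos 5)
Your proof is correct and follows exactly the route the paper attributes to Cohen et al.\ (the paper itself only cites \cite{Cohen} rather than reproducing the argument): the Bernstein--Hoeffding method with $f(x)=e^{hx}$, domination of each $\mathbb{E}[e^{hX_i}]$ by $\mathbb{E}[e^{hC}]$ via the term-by-term moment comparison of Lemma \ref{cohhen} (which is precisely the Corollary following that lemma in the text), and the standard Chernoff optimisation after writing $C$ as an affine image of a $\text{Ber}(\alpha)$ variable. The algebra identifying $\tau$ with $\beta$ and the admissibility check $k^{\ast}>0$ are both sound.
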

\begin{proof}
 See \cite{Cohen}.
\end{proof}

\begin{figure}[htb!]
\subfloat[][$t=0.30\times20$]{\includegraphics[scale=0.45]{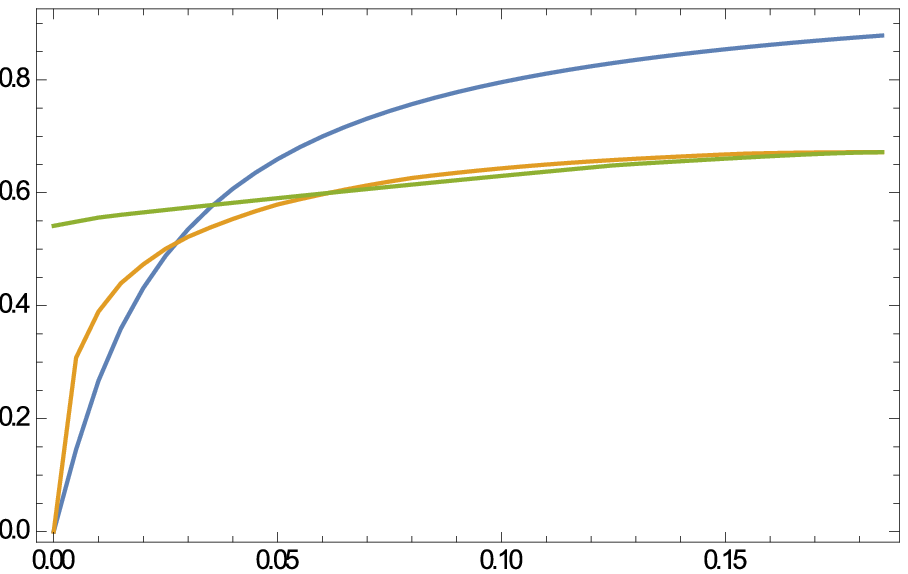}}
  \subfloat[][$t=0.55\times20$]{\includegraphics[scale=0.45]{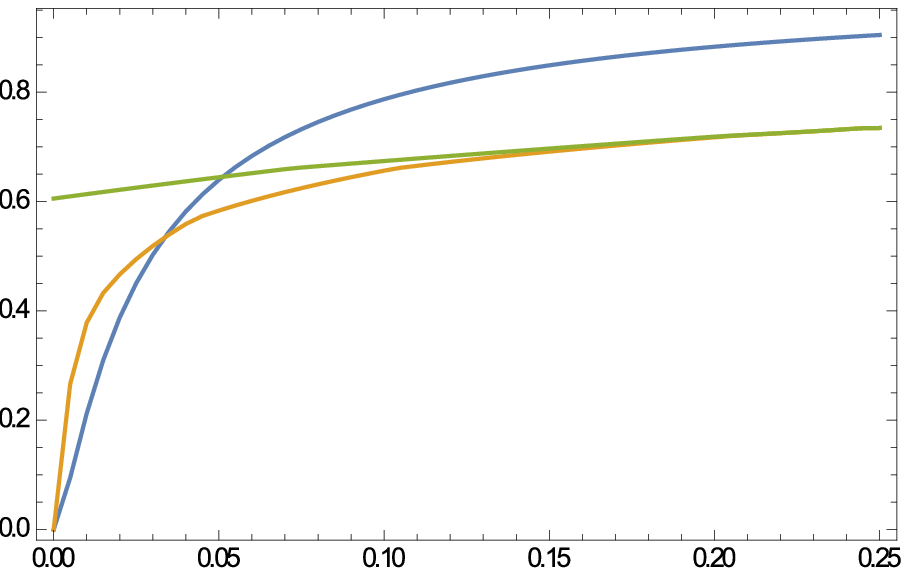}}
   \subfloat[][$t=0.80\times20$]{\includegraphics[scale=0.45]{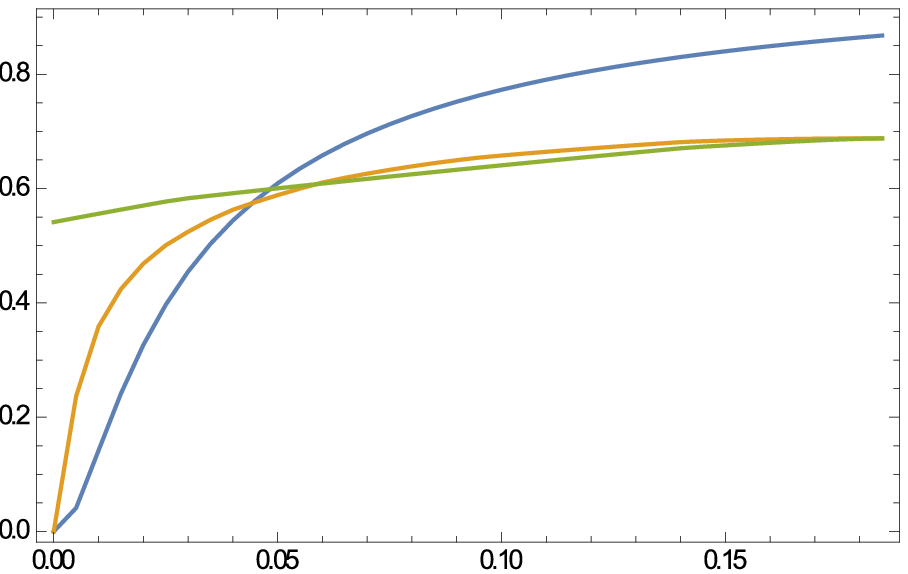}}\\
   \subfloat[][$t=0.35\times20$]{\includegraphics[scale=0.45]{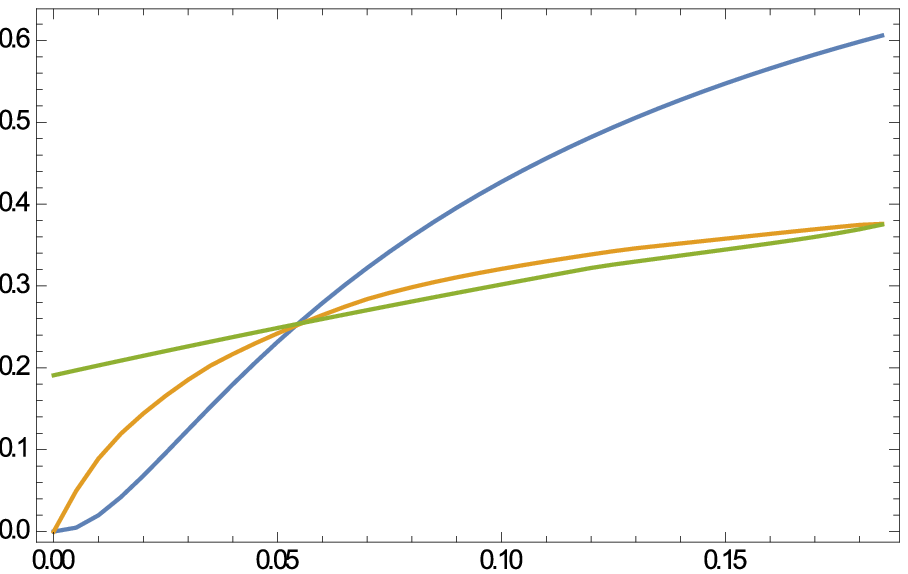}}
   \subfloat[][$t=0.60\times20$]{\includegraphics[scale=0.45]{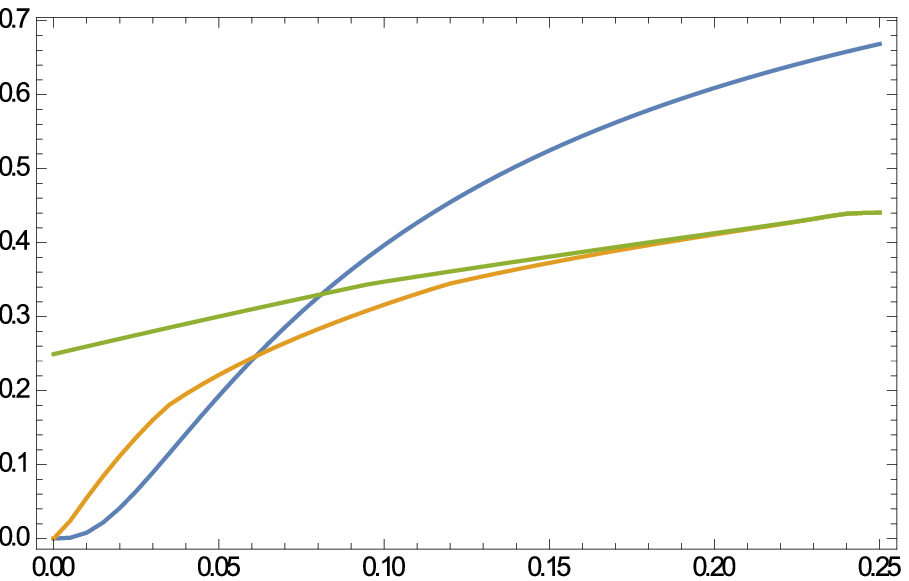}}
   \subfloat[][$t=0.85\times20$]{\includegraphics[scale=0.45]{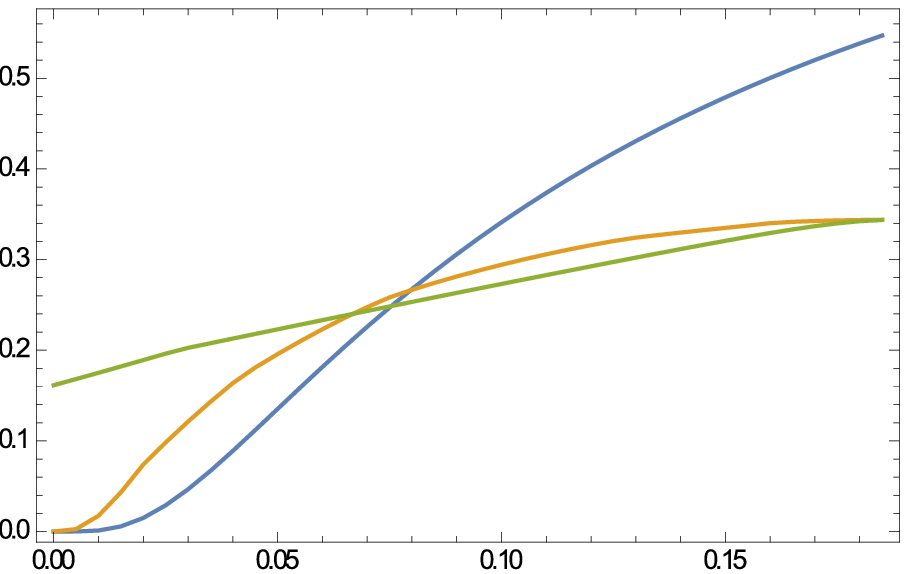}}\\
  \subfloat[][$t=0.40\times20$]{\includegraphics[scale=0.45]{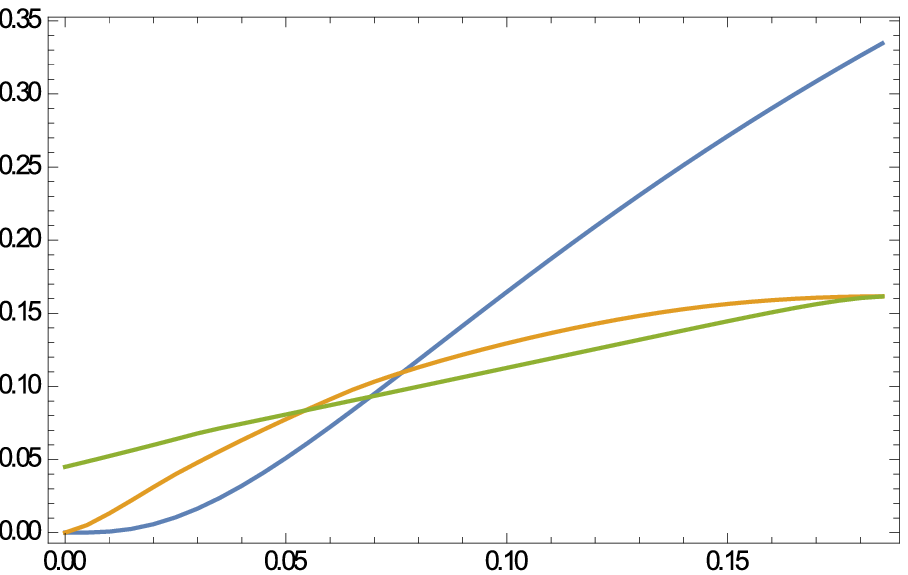}}
   \subfloat[][$t=0.65\times20$]{\includegraphics[scale=0.45]{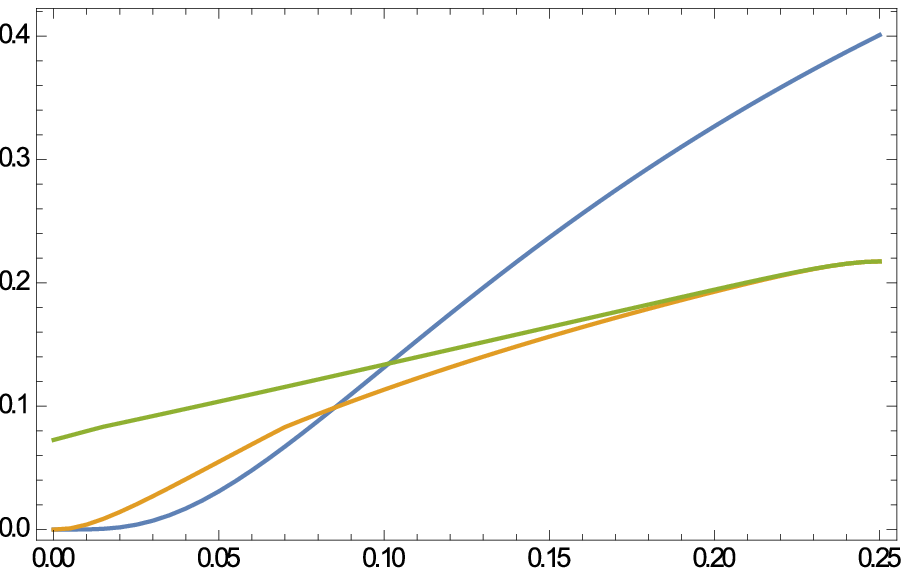}}
   \subfloat[][$t=0.90\times20$]{\includegraphics[scale=0.45]{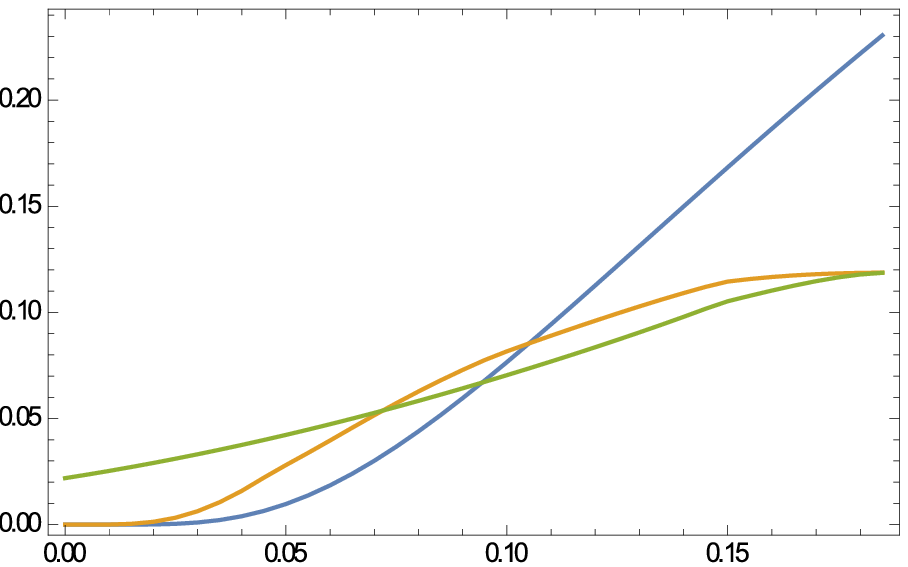}}\\
  \subfloat[][$t=0.45\times20$]{\includegraphics[scale=0.45]{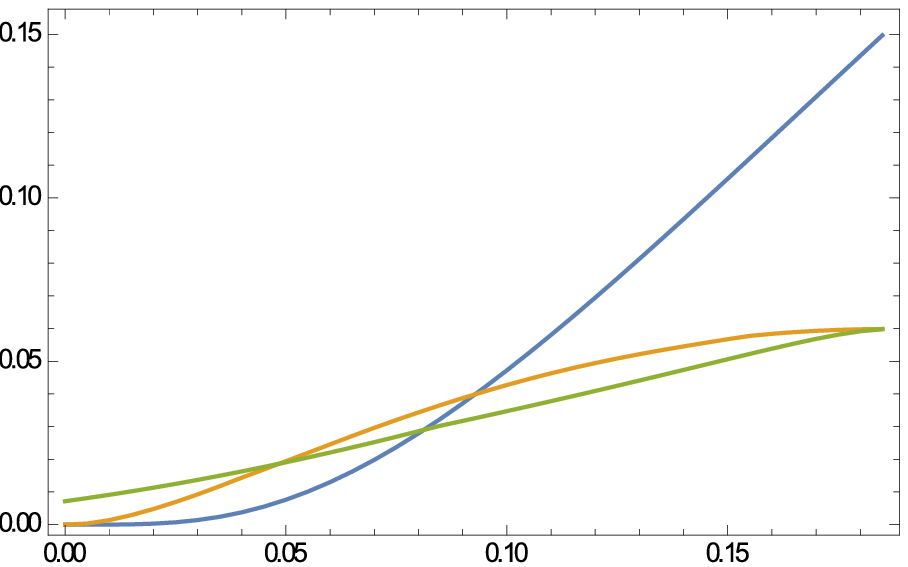}}
   \subfloat[][$t=0.70\times20$]{\includegraphics[scale=0.45]{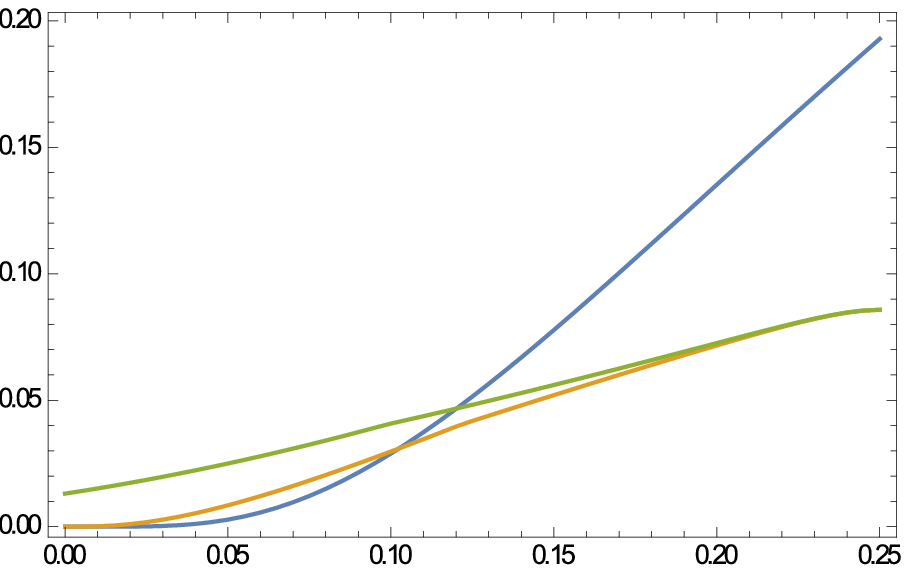}}
   \subfloat[][$t=0.95\times20$]{\includegraphics[scale=0.45]{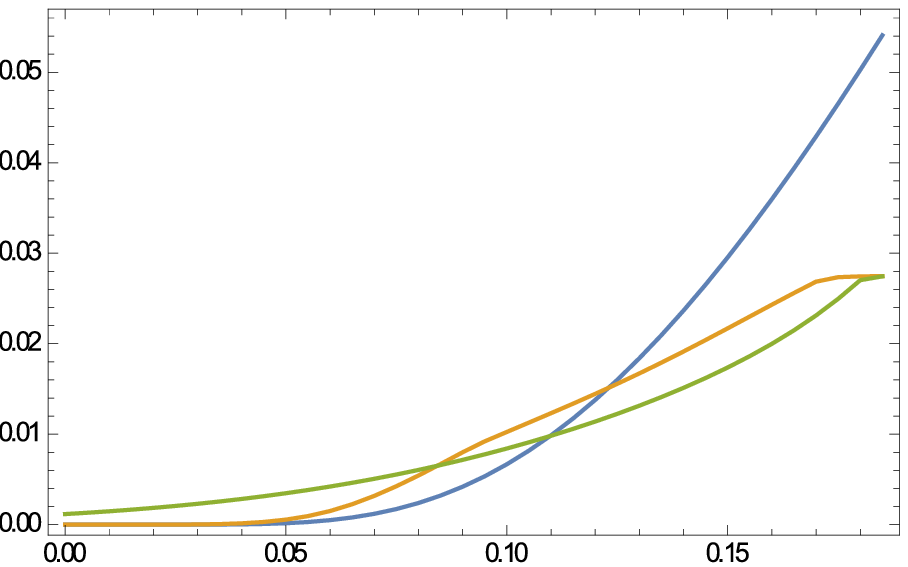}}
   \caption{Comparison of Bennett's bound and the bounds of Theorems \ref{momopt} and \ref{xitheorem} 
   for $X_i\in\mathcal{B}(p,\sigma^2)$. The first column corresponds to $p=1/4$. The second column 
   corresponds to $p=1/2$. Finally, the third column corresponds to the value $p=3/4$.
   We set $n=20$ and $t=(p+\epsilon)n$, for particular choices of $\epsilon$. 
   The blue curves represent Bennett's bound; the orange curves represent the bound given 
   by Theorem \ref{xitheorem}; 
   the green curves correspond to the bound given by Theorem \ref{momopt}.
   The abscissae are the variances.}\label{fig:compare}
\end{figure}

Our numerical experiments suggest that, when $\sigma^2$ is not very small, 
the bound given by Theorem \ref{xitheorem} is tighter than 
Bennett's bound.
Note that we can also apply the bound given by Theorem \ref{momopt}  to random variables 
from the class $\mathcal{B}(p,\sigma^2)$; it is not difficult to implement this bound.
In order to build a concrete mental image let us fix the parameter $p$ and consider
random variables $X_i, i=1,\ldots,n$ such that $X_i \in \mathcal{B}(0.5,\sigma^2).$
In a similar way as in  Proposition \ref{Yuyiprtwo} one can show that it suffices to 
consider the infimum, in the bound of Theorem \ref{xitheorem}, over the set 
$K:=\{k/2 : k\; \text{is nonnegative integer and}\; k/2 <t\}$. 
We can now put the computer to work to calculate the bound  
\[ \inf_{\epsilon\in K} \mathbb{E}\left[\max\left\{0,\;\frac{\sum_{i=1}^n \xi_{p_i,\sigma_i}-\epsilon}{t-\epsilon}\right\}\right]. \]
Figure \ref{fig:compare} shows comparisons between  Bennett's bound, the bound obtained in Theorem \ref{momopt}  
and the bound of Theorem \ref{xitheorem}. The abscissae in these figures correspond to the variance. 
Notice that, when the variance is large, 
the bounds given by Theorems \ref{momopt} and \ref{xitheorem} are sharper than the Bennett bound.

In the next section we stretch a limitation of the Bernstein-Hoeffding method.

\section{Unbounded random variables}\label{unbounded}

So far we have employed the Bernstein-Hoeffding method to sums of independent and \emph{bounded} random variables. 
The reader may wonder whether the method can be employed in order to obtain 
bounds on deviations from the expectation for sums of independent, non-negative and \emph{unbounded} random variables. 
We will show, in this section, that in this case the method yields a bound that is the same as the bound 
given by Markov's inequality. Let us remark that this fact was already known to Hoeffding (see the footnote 
in \cite{Hoeffdingone}, page $15$) but we were not able to find a proof; we include a proof for the sake of completeness. 
Hence the case of non-negative and unbounded random variables 
requires different methods and the reader is invited to take a 
look at the work of Samuels \cite{Samuelsone}, \cite{Samuelstwo}, \cite{Samuelsthree} and 
Feige \cite{Feige} for further details and references. The case of non-negative and 
unbounded random variables seems to be less investigated than the case of bounded random variables. 
Talagrand (see \cite{Talagrand}, page 692, Comment $3$) already mentions that it is unclear how to improve 
Hoeffding's inequality without the assumption that the random variables are bounded from above.
Let us fix some notation. \\

For given $\mu>0$, let $\mathcal{U}(\mu)$ the class of non-negative random variables whose mean equals $\mu$. 
Formally, 
\[  \mathcal{U}(\mu) = \{X: X\geq 0,\; \mathbb{E}[X]=\mu\} . \]

Now, for $i=1,\ldots,n$, fix $\mu_i \geq 0$ and  $X_i \in \mathcal{U}(\mu_i)$. If $t>\sum_i \mu_i$, then one can estimate
\[ \mathbb{P}\left[\sum_{i=1}^{n}X_i \geq t\right] \leq \frac{1}{f(t)} \mathbb{E}\left[ f\left(\sum_{i=1}^{n}X_i\right)\right], \]
where $f(\cdot)$ is a non-negative, convex and increasing function. A crucial step in 
the Bernstein-Hoeffding method is to 
minimise the right hand side of the last inequality with respect to $f(\cdot)$.
We may assume that we minimise over those functions $f$ for which $f(t)=1$. 
We now show that this minimisation leads to a bound that is the same as Markov's. Note that Markov's inequality yields
$\mathbb{P}\left[\sum_{i=1}^{n}X_i \geq t\right]\leq \frac{\sum_i \mu_i}{t}$.
Recall the definition of the class $\mathcal{F}_{ic}(t)$, from the Introduction, and let 
$\mathcal{Z}_{ic}(t)$ be the class consisting of all functions $f\in \mathcal{F}_{ic}(t)$ such that $f(t)=1$. 
In this section we report the following. \\

\begin{prop}\label{unbound} With the same notation as above, we have
\[ V:= \inf_{f\in \mathcal{Z}_{ic}(t)} \sup_{X_i\in \mathcal{U}(\mu_i)} \mathbb{E}\left[f\left(\sum_{i=1}^{n}X_i\right)\right] \geq \frac{\sum_{i=1}^{n}\mu_i}{t}.\] 
\end{prop}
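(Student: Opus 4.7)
The plan is to exhibit a family of random variables parametrized by a scale $M\to\infty$ that witnesses the bound. Specifically, for $M>t$ and each $i$, let $X_i^{(M)}$ be the independent two-point random variable with
\[ \mathbb{P}\bigl[X_i^{(M)}=M\bigr]=\mu_i/M,\qquad \mathbb{P}\bigl[X_i^{(M)}=0\bigr]=1-\mu_i/M. \]
Then $X_i^{(M)}\in \mathcal{U}(\mu_i)$, so for any $f\in\mathcal{Z}_{ic}(t)$ the supremum in the definition of $V$ is at least $\mathbb{E}[f(\sum_i X_i^{(M)})]$.

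Next I would isolate the two dominant contributions in that expectation: the event that all $X_i^{(M)}$ vanish, and the $n$ disjoint events in which exactly one coordinate equals $M$ and the others vanish. Discarding the remaining (non-negative) terms gives
\[ \mathbb{E}\Bigl[f\bigl(\textstyle\sum_i X_i^{(M)}\bigr)\Bigr] \geq f(0)\prod_{i=1}^{n}\bigl(1-\mu_i/M\bigr) + \frac{f(M)}{M}\sum_{i=1}^{n}\mu_i\prod_{j\ne i}\bigl(1-\mu_j/M\bigr). \]
Letting $M\to\infty$, the first factor tends to $f(0)$ and the second tends to $L\cdot m$, where $m:=\sum_i\mu_i$ and $L:=\lim_{M\to\infty}f(M)/M$ (the limit exists in $[0,\infty]$ by convexity of $f$, as the slope $(f(M)-f(0))/M$ is monotone in $M$). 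If $L=\infty$ we are done; otherwise we obtain
\[ \sup_{X_i\in\mathcal{U}(\mu_i)}\mathbb{E}\Bigl[f\bigl(\textstyle\sum_i X_i\bigr)\Bigr]\geq f(0)+Lm. \]

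To close the loop I would now exploit $f(t)=1$ together with convexity. Writing $t$ as a convex combination of $0$ and $M$, convexity yields $1=f(t)\leq (1-t/M)f(0)+(t/M)f(M)$, equivalently $f(M)/M\geq (1-f(0))/t+f(0)/M$, so passing to the limit gives $L\geq (1-f(0))/t$. Substituting,
\[ f(0)+Lm\;\geq\;f(0)+\frac{(1-f(0))m}{t}\;=\;\frac{m}{t}+f(0)\Bigl(1-\frac{m}{t}\Bigr)\;\geq\;\frac{m}{t}, \]
since $f(0)\geq 0$ and $m<t$. Taking the infimum over $f\in\mathcal{Z}_{ic}(t)$ yields $V\geq m/t$, as required.

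No step is really difficult: the only delicate points are (i) justifying the existence of $L$ and the case $L=+\infty$ via the monotonicity of difference quotients, and (ii) remembering that the problem is not asking for independent $X_i$ in the supremum, so a single explicit construction suffices. The conceptual message of the argument is that mass concentrated far out at $M$ with probability $\mu/M$ lets an unbounded random variable see arbitrarily high values of $f$, but convexity plus $f(t)=1$ prevents $f$ from growing faster than linearly in a way that compensates the decaying probability, which is exactly what locks the bound at Markov's ratio $m/t$.
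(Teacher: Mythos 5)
Your argument is correct, but it is genuinely different from the paper's. You make the adversarial distributions escape to infinity: two-point laws at $\{0,M\}$ with mass $\mu_i/M$ at $M$, keep only the ``all zero'' and ``exactly one at $M$'' events, and then beat the infimum over $f$ by a pointwise slope estimate — convexity plus $f(t)=1$ forces the asymptotic slope $L=\lim_M f(M)/M$ to satisfy $L\geq (1-f(0))/t$, and the algebra $f(0)+Lm\geq m/t$ (using $m<t$, $f(0)\geq 0$) closes the bound for every $f\in\mathcal{Z}_{ic}(t)$ separately; note you never even use the monotonicity of $f$ on $[t,\infty)$. The paper instead fixes a single $f$-independent witness, $Y_i$ supported on $\{0,t\}$ with $\mathbb{P}[Y_i=t]=\mu_i/t$, and then reuses its structure theory for optimal functions (Theorem \ref{Yuyipr} and Proposition \ref{Yuyiprtwo}): the infimum over $\mathcal{Z}_{ic}(t)$ is attained at a hinge $\max\{0,(x-\varepsilon)/(t-\varepsilon)\}$, and since $\sum_i Y_i$ lives on $\{0,t,2t,\dots,nt\}$ the optimal $\varepsilon$ is $0$, giving exactly $\frac{1}{t}\mathbb{E}[\sum_i Y_i]=\frac{\sum_i\mu_i}{t}$. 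Your route is more elementary and self-contained (no appeal to the extremal-function results, no limit structure on the support of the witness), at the cost of a limiting argument in $M$ and the small case analysis $L=\infty$; the paper's route evaluates the inner infimum exactly for one fixed bounded witness and integrates naturally with the machinery developed earlier in the paper. One side remark: your comment that the supremum does not require independent $X_i$ is beside the point — your construction is independent anyway, which is what the paper's setting intends, and this does not affect the validity of your proof.
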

\begin{proof} For $i=1,\ldots,n$, let $Y_i$ be the random variable that 
takes the values $0$ and $t$ with probabilities $1-\frac{\mu_i}{t}$ and $\frac{\mu_i}{t}$, respectively. 
Clearly, we have 
\[ V \geq \inf_{f\in \mathcal{Z}_{ic}(t)} \mathbb{E}\left[ f\left(\sum_{i=1}^{n}Y_i\right)\right] . \]
In a similar way as in Theorem \ref{Yuyipr} one can show that 
\[ \inf_{f\in \mathcal{Z}_{ic}(t)} \mathbb{E}\left[ f\left(\sum_{i=1}^{n}Y_i\right)\right] = \inf_{\varepsilon \in [0,t)} \mathbb{E}\left[\max\left\{0, \frac{\sum_i Y_i -\varepsilon}{t-\varepsilon}\right\}\right]. \]
Since $\sum_i Y_i \in \{0,t,2t,\ldots,nt\}$, a similar argument as in Proposition \ref{Yuyiprtwo} shows that the optimal 
$\varepsilon$ in the right hand side of the last equation is equal to $0$. Therefore, 
\[ \inf_{\varepsilon \in [0,t)} \mathbb{E}\left[\max\left\{0, \frac{\sum_{i=1}^{n} Y_i -\varepsilon}{t-\varepsilon}\right\}\right]= 
\frac{1}{t} \mathbb{E}\left[ \sum_{i=1}^{n} Y_i \right] = \frac{1}{t} \sum_{i=1}^{n} \mu_i \]
and the result follows. 
\end{proof}

Hence, in the case of non-negative and unbounded random variables,
the  method cannot yield a bound that is better than Markov's bound.\\

\textbf{Acknowledgements}
The authors are supported by ERC Starting Grant 240186 "MiGraNT, Mining Graphs and Networks: a Theory-based approach". 
We are grateful to Xiequan Fan for several valuable suggestions and comments.


\begin{thebibliography}{3}
\addcontentsline{toc}{chapter}{Bibliography}

\bibitem{Bennett} G. Bennett, (1962). \textit{Probability inequalities for sums of independent random variables},
J. Amer. Statist. Assoc.,
57, p. 33--45.

\bibitem{Bentkustwo} V. Bentkus, (2002). \textit{A remark on Bernstein, Prokhorov, Bennett, Hoeffding 
and Talagrand inequalities}. Lithuanian Math. Journal, vol. 42, no. 3, p. 262--269.

\bibitem{Bentkusone} V. Bentkus, (2004). \textit{On Hoeffding's inequalities},  Annals of Probab. 32(2), p. 1650--1673.

\bibitem{Bentkusthree} V. Bentkus, G.D.C. Geuze, M.C.A. Van Zuijlen, (2006). 
\textit{Optimal Hoeffding-like inequalities under a symmetry assumption}, Statistics, vol. 40, no. 2, p. 159--164. 


\bibitem{Cohen} A. Cohen, Y. Rabinovich, A. Schuster, H. Shachnai, (1999). \textit{Optimal bounds on tail probabilities: a study 
of an approach}, Advances in Randomized parallel computing, Comb. Optim., Vol. 5, Kluwer Acad. Publ., p. 1--24.

\bibitem{Eaton} M.L. Eaton,  (1974). \textit{A probability inequality for linear combinations of bounded random variables}, Annals of Stat., vol. 2, no. 3, p. 609--613.

\bibitem{Fan} X. Fan, I. Grama, Q. Liu, (2013). \textit{Sharp large deviation probabilities for sums of independent
bounded random variables}, (preprint) , arXiv:1206.2501. 

\bibitem{Feige} U. Feige, (2006) \textit{On sums of independent random variables with
unbounded variance and estimating the average degree in a graph}, SIAM J. Comput. 35 (4), p. 964--984.


\bibitem{Feller} W. Feller, (1957). \textit{An introduction to probability theory and its applications}, Vol. 1, Wiley New York.

\bibitem{Fellertwo} W. Feller, (1957). 
\textit{An introduction to probability theory and its applications}, Vol. 2,  Wiley New York.

\bibitem{From} S.G. From, (2013). \textit{An Improved Hoeffding's Inequality of Closed Form Using Refinemets of the 
Arithmetic Mean-Geometric Mean Inequality}, Communications in Statistics-Theory and Methods. 

\bibitem{Fromtwo} S.G. From,  A.W. Swift, (2013). \textit{A refinement of Hoeffding's inequality}, 
J. of Stat. Computation and Simulation, 
vol. 83, Issue 5. 

\bibitem{Tusnady} L. Gy{\"o}rfi, P. Harremo{\"e}s, G. Tusn\'ady, (2012). 
\textit{Some refinements of large deviation tail probabilities}, (preprint),  arXiv:1205.1005.

\bibitem{Hoeffdingtwo} W. Hoeffding, (1965). \textit{On the Distribution of the Number of Successes in Independent Trials}, 
Annals of Math. Stat., vol. 27, no. 3, p. 713--721. 

\bibitem{Hoeffdingone} W. Hoeffding, (1963). \textit{Probability inequalities for sums of bounded random variables}, 
J. Amer. Statist. Assoc. 58, p. 13--30.


\bibitem{Kraft} O. Krafft,  N. Schmitz, (1969). \textit{A note on Hoeffding's inequality}, J. Amer. Statist. Assoc., 64, 
no. 327, p. 907--912.

\bibitem{McDiarmid} C. McDiarmid, (1989). \textit{On the method of bounded differences}, London Math. Soc. Lecture Note Ser.,
141, p. 148--188.


\bibitem{Misra} N. Misra, H. Singh, E.J. Harner, (2003). \textit{Stochastic comparisons of Poisson and binomial random 
variables with their mixtures}, Stat. \& Probab. Letters, 65, p. 279--290.


\bibitem{Leon} C.A. Le\'on, F. Perron, (2003). \textit{Extremal properties of sums of Bernoulli random variables}, Stat. 
\& Probab. Letters, 62, p. 345--354.

\bibitem{Nagaev} F.D. Kha, S.V. Nagaev, (1971). \textit{Probability inequalities for sums of independent random variables}, 
Theory of Probab. Appl., 16, no. 4, p. 643--660.

\bibitem{Phillips} G.M. Phillips, (2003). \textit{Interpolation and approximation by polynomials}, Springer Verlag.

\bibitem{Pinelisthree} I. Pinelis, (2007). \textit{Exact inequalities for sums of asymmetric random variables, with applications},
Probab. Theory Relat. Fields, 139, p. 605--635.

\bibitem{Pinelisone} I. Pinelis, (2008). 
\textit{On inequalities for sums of bounded random variables}, J. Math. Inequalities, vol. 2, no. 1, p. 1--7. 

\bibitem{Pinelistwo} I. Pinelis, (2014). \textit{On the Bennett-Hoeffding inequality}, Ann. Inst. Henri Poincar\'e Probab. Stat.,
vol. 50, no. 1, p. 15--27.


\bibitem{Samuelsone} S. Samuels, (1966). \textit{On a Chebyshev-type inequality for sums of independent random variables}, 
Annals of Math. Stat., Vol. 37, p. 248--259.


\bibitem{Samuelstwo} S. Samuels, (1968) \textit{More on a Chebyshev-type inequality for sums of independent random variables},
Purdue Stat. Dept. Mimeo, Ser. 155.
\bibitem{Samuelsthree} S. Samuels, (1969). \textit{The Markov inequality for sums of independent random variables}, 
Annals of Math. Stat., Vol. 40 (6), p. 1980--1984.

\bibitem{Schmidt} J.P. Schmidt, A. Siegel, A. Srinivasan, (1995). 
\textit{Chernoff-Hoeffding bounds for applications with limited
independence}, SIAM J. Disc. Math, vol. 8, no. 2, p. 223--250.

\bibitem{Shaked} M. Shaked, G.J. Shanthikumar, (2007). \textit{Stochastic Orders and their Applications}, 
Springer, New York.

\bibitem{Siegelone} A. Siegel, (1992). \textit{Towards a usable theory of Chernoff-Hoeffding bounds for heterogeneous and 
partially dependent random variables}, manuscript.



\bibitem{Talagrand} M. Talagrand, (1995). \textit{The missing factor in Hoeffding's inequalities}, 
Ann. Inst. Henri Poincar\'e Probab. Stat. 31, p. 689--702.


\bibitem{Xia} Y. Xia, (2008). \textit{Two refinements of the Chernoff bound for the sum of nonidentical 
Bernoulli random variables}, Stat. \& Probab. Letters, 78(12), p. 1557--1559.

\end{thebibliography}
\end{document}